\theoremstyle{plain}
\newtheorem{theorem}{Theorem}[section]
\newtheorem{lemma}[theorem]{Lemma}
\newtheorem{corollary}[theorem]{Corollary}
\newtheorem{proposition}[theorem]{Propositition}
\newtheorem{example}[theorem]{Example}
\theoremstyle{definition}
\newtheorem{definition}[theorem]{Definition}
\theoremstyle{remark}
\newtheorem{remark}[theorem]{Remark}
\def\@setcopyright{}
\def\serieslogo@{}
\let\OLDthebibliography\thebibliography
\renewcommand\thebibliography[1]{
	\OLDthebibliography{#1}
	\setlength{\parskip}{5pt}
	\setlength{\itemsep}{0pt plus 0.ex}
}
\begin{document}
	   \author{Spyridon Afentoulidis-Almpanis}
	
	\address{Department of Mathematics, University of Lorraine, 3 rue Augustin
		Fresnel, 57070 Metz}
	
	\email{spyridon.afentoulidis-almpanis@univ-lorraine.fr}
	
	
	\title[Noncubic Dirac operators for finite dimensional modules]{Noncubic Dirac operators for finite dimensional modules}
	
	
	\subjclass{Primary 17B45; Secondary 20G05, 22E46}
	
	\keywords{complex semisimple Lie algebras, highest weight representations, Dirac operators,
		Dirac cohomology, Weyl inequalities}


\maketitle

\begin{abstract}
We study the decomposition into irreducibles of the kernel of noncubic Dirac operators attached to finite-dimensional modules. We compare this decomposition with features of Kostant's cubic Dirac operator. In particular, we show that the kernel of noncubic Dirac operators need not contain full isotypic components. The cases of classical and exceptional complex Lie algebras are studied in detail. As a by-product, we deduce some information on the kernel of noncubic geometric Dirac operators acting on sections over compact manifolds studied by Slebarski.
\end{abstract}

\tableofcontents

 \section{Introduction}

Dirac operators play an important role in representation theory of Lie groups. For example, every discrete series representation of a noncompact connected semisimple real Lie group can be realized as the $L^2$-kernel of a Dirac operator acting on sections of a bundle over a Riemannian symmetric space $G/K$ \cite{parthaThesis,atiyahsm}. On the other hand, smooth vectors of principal series representations can be embedded in smooth kernels of Dirac operators on a reductive homogeneous space $G/H$ \cite{mehdizierauAdv}.

In the late $1990$'s, in a series of lectures at MIT, Vogan defined an algebraic Dirac operator \cite{vogantalks}. 
More precisely, if $\mathfrak{g}$ is a complex semisimple Lie algebra and $\mathfrak{k}$ is a maximal compact subalgebra of $\mathfrak{g}$, to each $(\mathfrak{g},K)$-module $(\pi,V)$, he associated an operator $D_{\mathfrak{g},\mathfrak{k}}(V):V\otimes S\rightarrow V\otimes S$ given by
\begin{equation*}\label{kos}
D_{{\mathfrak g},{\mathfrak k}}(V)=\sqrt{2}\sum_j \pi(X_j)\otimes\gamma(X_j),
\end{equation*}
where $\{X_j\}$ is an orthonormal basis of ${\mathfrak p}:=\mathfrak{k}^\perp$, $S$ is a spin module for the Clifford algebra $\mathbf{C}(\mathfrak{p})$ of $\mathfrak{p}$ and $\gamma$ stands for the action of $\mathbf{C}(\mathfrak{p})$ on $S$. Here the orthogonal is taken with respect to the Killing form $\langle\cdot,\cdot\rangle$ of $\mathfrak{g}$. The factor $\sqrt{2}$ occuring in front of $D_{\mathfrak{g},\mathfrak{k}}(V)$ will be explained further later in the text.
The operator $D_{\mathfrak{g},\mathfrak{k}}(V)$ does not depend on the choice of the basis $\{X_i\}$ and, provided that $V$ is unitary, is self-adjoint with respect to a Hermitian form defined on $V\otimes S$. Moreover, its square differs from the Casimir element by a scalar:
\begin{equation}\label{diracsquare}
D_{{\mathfrak g},{\mathfrak k}}(V)^2=\pi(\Omega_{\mathfrak g})\otimes 1-(\pi\otimes\gamma)(\Omega_{{\mathfrak k}_\Delta})+(\lVert\rho\rVert^2-\lVert\rho_{\mathfrak k}\rVert^2)1\otimes 1,
\end{equation}
where $\Omega_{\mathfrak g}$ (respectively $\Omega_{\mathfrak k}$) denotes the Casimir element in the enveloping algebra $U({\mathfrak g})$ (respectively $U({\mathfrak k})$) of $\mathfrak{g}$ (respectively $\mathfrak{k}$), $\rho$ (respectively $\rho_{\mathfrak k}$) is half the sum of the positive roots in a positive system $\Delta^+$ (respectively $\Delta_{\mathfrak k}^+\subset\Delta^+$) for roots in $\mathfrak{g}$ (respectively ${\mathfrak k}$), $\lVert\cdot\rVert$ is the norm induced by the Killing form of ${\mathfrak g}$, and $(\cdot)_\Delta$ is the diagonal embedding 
\begin{equation*}
(\cdot)_\Delta:{\mathfrak k}\longrightarrow U({\mathfrak g})\otimes\mathbf{C}({\mathfrak p})
\end{equation*}
of ${\mathfrak k}$ into $U({\mathfrak g})\otimes\mathbf{C}({\mathfrak p})$, given by the embedding ${\mathfrak k}\subset{\mathfrak g}$ in $U({\mathfrak g})$ and the embedding of $\mathfrak{k}$ in $\mathbf{C}({\mathfrak p})$. Equipped with this action, $V\otimes S$ is a Lie algebra representation of $\mathfrak{k}$ and the operator $D_{\mathfrak{g},\mathfrak{k}}(V)$ turns out to be $\widetilde{K}$-equivariant, where $\widetilde{K}$ denotes the spin double cover of $K$. Consequently, the kernel $\ker D_{\mathfrak{g},\mathfrak{k}}(V)$ and the image $\mathrm{im}\hspace{.7mm} D_{\mathfrak{g},\mathfrak{k}}(V)$ of $D_{\mathfrak{g},\mathfrak{k}}(V)$ are $\widetilde{K}$-modules.  Vogan defined the Dirac cohomology of a $(\mathfrak{g},\widetilde{K})$-module $V$ to be the $\widetilde{K}$-module 
\begin{equation}\label{diraccohom}
H_D^{{\mathfrak g},{\mathfrak k}}(V)=\ker D_{{\mathfrak g},{\mathfrak k}}(V)/\big(\text{im}\hspace{.7mm}D_{{\mathfrak g},{\mathfrak k}}(V)\cap\ker D_{{\mathfrak g},{\mathfrak k}}(V)\big).
\end{equation}
Assuming that $V$ is irreducible, he conjectured that if $H_D^{{\mathfrak g},{\mathfrak k}}(V)$ contains a $\widetilde{K}$-module with highest weight $\beta$ then $V$ has infinitesimal character $\beta+\rho_{\mathfrak k}$. In other words, the infinitesimal character of a module can be recovered from its Dirac cohomology. Vogan's conjecture was first proved by Huang and Pand\v zi\'c for equal rank pairs  $({\mathfrak g},{\mathfrak k})$ in \cite{huangpandzic}.

In 1999, Goette \cite{goette} and independently Kostant \cite{Kostant-1999} generalized the above construction in the case of equal rank pairs $(\mathfrak{g},\mathfrak{h})$ where $\mathfrak{g}$ is as above and $\mathfrak{h}$ is a reductive Lie subalgebra of $\mathfrak{g}$ such that the restriction of the Killing form of $\mathfrak{g}$ to $\mathfrak{h}\times\mathfrak{h}$ remains nondegenerate. More precisely, if $\mathfrak{q}:=\mathfrak{h}^\perp$ with respect to $\langle\cdot,\cdot\rangle$ and $(\pi,V)$ is, as before, a $(\mathfrak{g},K)$-module, Goette and Kostant defined an operator 
\begin{equation}\label{diracintro}
D_{{\mathfrak g},{\mathfrak h}}(V):V\otimes S\rightarrow V\otimes S
\end{equation}
given by
\begin{equation*}D_{{\mathfrak g},{\mathfrak h}}(V)=\sqrt{2}\{\sum_j \pi(X_j)\otimes\gamma(X_j)-1\otimes\gamma(c)\},
\end{equation*}
where $\{X_j\}$ is an orthonormal basis of ${\mathfrak q}$ and $\gamma(c)$ is given by
\begin{equation}\label{cubictermm}
\gamma(c)=\frac{1}{6}\sum_{i,j,k} \langle X_i,[X_j,X_k]\rangle\gamma(X_i)\gamma(X_j)\gamma(X_k).
\end{equation}
We note that in the case where $\mathfrak{h}$ is a symmetric Lie algebra, $\gamma(c)=0$.
The operator $D_{\mathfrak{g},\mathfrak{h}}(V)$, known as Kostant's cubic Dirac operator, has the same properties as $D_{\mathfrak{g},\mathfrak{k}}(V)$. Namely, $D_{\mathfrak{g},\mathfrak{h}}(V)$  
does not depend on the choice of the basis $\{X_i\}$, is $\mathfrak{h}$-equivariant with respect to the action of $\mathfrak{h}$ on $V\otimes S$ and, provided that $V$ is unitary, is self-adjoint with respect to a Hermitian 
form defined on $V\otimes S$. Goette and Kostant considered the extra cubic term $1\otimes \gamma(c)$ in the definition of the above operator in order to have a convenient formula for the square $D_{\mathfrak{g},\mathfrak{h}}(V)^2$ of $D_{\mathfrak{g},\mathfrak{h}}(V)$:
\begin{equation}\label{Dsquare}
D_{\mathfrak{g},\mathfrak{h}}(V)^2=\pi(\Omega_\mathfrak{g})\otimes1-(\pi\otimes\gamma)(\Omega_{\mathfrak{h}_\Delta})+(\lVert\rho\rVert^2-\lVert\rho_\mathfrak{h}\rVert^2)1\otimes 1,
\end{equation}
where $\rho_\mathfrak{h}$ is the half-sum of the positive roots of some positive system $\Delta_\mathfrak{h}^+\subset\Delta_\mathfrak{h}$ for $\Delta_\mathfrak{h}:=\Delta(\mathfrak{h},\mathfrak{t})$.

In the case where $V$ is finite-dimensional of highest weight $\lambda$, $D_{\mathfrak{g},\mathfrak{h}}(V)$ is self-adjoint so that $\text{im}\hspace{0.7mm}D_{{\mathfrak g},{\mathfrak h}}(V)\cap\ker D_{{\mathfrak g},{\mathfrak h}}(V)$ is trivial \cite[Proposition 3.2.9 and Corollary 3.2.10]{thesis}. Hence the Dirac cohomology $H_D^{{\mathfrak g},{\mathfrak h}}(V)$ of $V$ coincides with the kernel $\ker D_{{\mathfrak g},{\mathfrak h}}(V)$ for which
Kostant gave a complete decomposition into irreducibles \cite{Kostant-1999}:
\begin{equation}\label{kernel}
\ker D_{\mathfrak{g},\mathfrak{h}}(V)=\bigoplus_{w\in W^1} F_{w(\lambda+\rho)-\rho_{\mathfrak{h}}},
\end{equation}
where
\begin{equation*}
W^1:=\{w\in W\mid \Delta_\mathfrak{h}^+\subseteq w\Delta^+\},
\end{equation*}
$W$ is the Weyl group of $\Delta$,
and $F_{w(\lambda+\rho)-\rho_{\mathfrak{h}}}$ is the finite-dimensional irreducible representation of $\mathfrak{h}$ with highest weight $w(\lambda+\rho)-\rho_{\mathfrak{h}}$.
A similar formula was proved in the unequal rank situation in \cite{kang-pandzic} for the pair $({\mathfrak g},{\mathfrak k})$ and in \cite{Mehdi-2014} for pairs $({\mathfrak g},{\mathfrak h})$. In particular, in the unequal rank case, the decomposition of $H_D^{{\mathfrak g},{\mathfrak k}}(V)$ (respectively $H_D^{{\mathfrak g},{\mathfrak h}}(V)$) into irreducibles is no longer multiplicity free.

In this paper, we consider equal rank pairs $(\mathfrak{g},\mathfrak{h})$ as above with $\mathfrak{h}$ nonsymmetric. Then, $\gamma(c)\neq0$ and we define a family $\{D^t_{\mathfrak{g},\mathfrak{h}}(V)\}_{t\in[0,2]}$ of Dirac-type operators 
\begin{equation*}
D^t_{\mathfrak{g},\mathfrak{h}}(V):V\otimes S\rightarrow V\otimes S
\end{equation*}
given by
\begin{equation*}
D^t_{\mathfrak{g},\mathfrak{h}}(V):=D_{\mathfrak{g},\mathfrak{h}}(V)+(1-t)\sqrt{2}\otimes \gamma(c).
\end{equation*}
These operators, which we shall call $t$-noncubic Dirac operators in the case when $t\neq1$, all are $\mathfrak{h}$-equivariant so that their kernels are representations of the Lie algebra $\mathfrak{h}$.
For $t=1$, one obtains the Kostant's cubic Dirac operator $D_{\mathfrak{g},\mathfrak{h}}(V)$ while for $t=0$, one obtains the operator $\widehat{D}_{\mathfrak{g},\mathfrak{h}}(V):=D^0_{\mathfrak{g},\mathfrak{h}}(V)$ that we call the noncubic Dirac operator. These operators are a representation-theoretic analogue of a family of invariant differential Dirac operators acting on sections over manifolds studied by Slebarski \cite{slebarski,slebarski2,slebarski3} and Agricola \cite{Ilka} from a differential geometry viewpoint. We are interested in studying the kernel of $D^t_{\mathfrak{g},\mathfrak{h}}(V)$, $t\in[0,2]$, in the case when $V$ is finite-dimensional. The basic difficulty one has to address is the fact that, as we will see, for $t\neq 1$, the square of $D^t_{\mathfrak{g},\mathfrak{h}}(V)$ has not the "good" form \eqref{diracsquare} which is crucial for proving \eqref{kernel}. As a result, $\ker D^t_{\mathfrak{g},\mathfrak{h}}(V)$ may be very different from $\ker D_{\mathfrak{g},\mathfrak{h}}(V)$. Let us illustrate the situation with the following example.

\begin{example}\label{firstexample} Let $\mathfrak{g}=\mathfrak{sl}(4,\mathbb{C})$, i.e. the Lie algebra of $4-$by$-4$ traceless complex matrices, and $\mathfrak{t}$ a Cartan subalgebra of $\mathfrak{g}$. The root system of $\mathfrak{g}$ is
	\begin{equation*}
	\Delta=\{\varepsilon_i-\varepsilon_j\mid 1\leq i\neq j\leq 4\}
	\end{equation*}
	and 
	\begin{equation*}
	\Delta^+=\{\varepsilon_i-\varepsilon_j\mid 1\leq i<j\leq 4\}
	\end{equation*}
	is a positive system for $\Delta$. For every $\alpha\in\Delta^+$, choose nonzero root vectors $E_{\pm\alpha}\in\mathfrak{g}_{\pm\alpha}$ and set
	\begin{equation*}
	H_\alpha:=\dfrac{[E_\alpha,E_{-\alpha}]}{\langle E_\alpha,E_{-\alpha}\rangle}.
	\end{equation*}
	The following table relates the kernels of the cubic and the noncubic Dirac operators in the case where  $\mathfrak{h}$ is the indicated Lie subalgebra of $\mathfrak{g}$ and $V$ is the standard representation of $\mathfrak{g}$.
	
	{ \renewcommand*{\arraystretch}{2}
		\begin{table}[H]
			\centering
			\begin{adjustwidth}{-6mm}{-6mm}
				\begin{tabular}{|c|c|c|}
					\hline
					Ranks &$\mathfrak{h}$ &$\ker D_{\mathfrak{g},\mathfrak{h}}(V)$ $ \textbf{vs.}$ $\ker\widehat{D}_{\mathfrak{g},\mathfrak{h}}(V)$ \\
					\hhline{|=|=|=|}
					$\mathrm{rk}_\mathbb{C}(\mathfrak{g})=\mathrm{rk}_\mathbb{C}(\mathfrak{h})$&$\mathfrak{t}$&
					$\ker D_{\mathfrak{g},\mathfrak{h}}(V)\varsubsetneq\ker\widehat{D}_{\mathfrak{g},\mathfrak{h}}(V)$\\
					$\mathrm{rk}_\mathbb{C}(\mathfrak{g})>\mathrm{rk}_\mathbb{C}(\mathfrak{h})$&$\mathbb{C}\{H_{\varepsilon_1-\varepsilon_3}\}\oplus\mathfrak{g}_{\varepsilon_1-\varepsilon_3}\oplus\mathfrak{g}_{-\varepsilon_1+\varepsilon_3}$&$\ker D_{\mathfrak{g},\mathfrak{h}}(V)\subseteq\ker\widehat{D}_{\mathfrak{g},\mathfrak{h}}(V)$\\
					$\mathrm{rk}_\mathbb{C}(\mathfrak{g})>\mathrm{rk}_\mathbb{C}(\mathfrak{h})$&$\mathbb{C}\{5H_{\varepsilon_1-\varepsilon_2}+4H_{\varepsilon_2-\varepsilon_3}\}$&$\ker D_{\mathfrak{g},\mathfrak{h}}(V)\nsubseteq\ker\widehat{D}_{\mathfrak{g},\mathfrak{h}}(V)$\\
					$\mathrm{rk}_\mathbb{C}(\mathfrak{g})=\mathrm{rk}_\mathbb{C}(\mathfrak{h})$&$\mathfrak{t}\oplus\bigoplus\limits_{1\leq i,j\leq 3}\mathfrak{g}_{\varepsilon_i-\varepsilon_j}$&$\ker D_{\mathfrak{g},\mathfrak{h}}(V)=\ker \widehat{D}_{\mathfrak{g},\mathfrak{h}}(V)$\\
					\hline
				\end{tabular}
			\end{adjustwidth}
			\vspace{2mm}
			\captionsetup{justification=centering}
			\caption{Relation of kernels of cubic and noncubic Dirac operators for the standard representation}
		\end{table}
	}
\end{example}

This paper aims to study the decomposition into irreducible $\mathfrak{h}$-represe\-ntations of the kernel of $t$-noncubic Dirac operators $D^t_{\mathfrak{g},\mathfrak{h}}(V)$ when $V$ is finite-dimensional. In the case of equal rank $\mathfrak{g}$ and $\mathfrak{h}$ and for $t$ different from $0$ and $2$, we show that the kernel $\ker D^t_{\mathfrak{g},\mathfrak{h}}(V)$ of the $t$-noncubic Dirac operator $D^t_{\mathfrak{g},\mathfrak{h}}(V)$ coincides with the kernel $\ker D_{\mathfrak{g},\mathfrak{h}}(V)$ of the cubic one (Theorem \ref{kerneltcubic}). For the extreme values $0$ and $2$, we give a decomposition in the case when $\mathfrak{h}$ is a Cartan subalgebra $\mathfrak{t}$ of $\mathfrak{g}$ (Theorem \ref{thmbasic}) while for general equal rank $\mathfrak{h}$ we show that the kernel of the cubic Dirac operator is actually contained in the kernel of the noncubic one (Proposition \ref{kerinclusion}) and differs from the other cases in the sense that full isotypic components need not lie in $\ker \widehat{D}_{\mathfrak{g},\mathfrak{h}}(V)$ (Example \ref{nonpolyn}). 
An essential ingredient of our study are the Weyl's inequalities which we recall in Section \ref{weylssubsection}. Our decomposition for $\ker \widehat{D}_{\mathfrak{g},\mathfrak{t}}(V)$ is made more explicit for complex classical and exceptional Lie algebras (Section \ref{sectionclassical} and \ref{sectionexceptional} respectively). Finally as a by-product, we apply our results to geometric Dirac operators on compact manifolds via a duality principle between algebraic and geometric Dirac operators (Theorems \ref{thm62} and \ref{thm63}).

The paper is organized as follows. In Section \ref{Section spin rep}, we recall the necessary theory of Clifford algebras and spin modules while in Sections \ref{main}-\ref{mainfin} we state and prove our main results concerning the kernel of $t$-noncubic Dirac operators. In Sections \ref{sectionclassical} and \ref{sectionexceptional}, we adapt these results in the case of classical and exceptional, respectively, Lie algebras. Finally, in Section \ref{sectionSleb}, we discuss applications to Slebarski's Dirac operators.

This work is part of my Ph.D. thesis at the University of Lorraine. I express my deep gratitude to my thesis advisor Professor Salah Mehdi for his guidance and encouragement.


\section{Spin representation}\label{Section spin rep} Let $\mathfrak{g}$ be a complex semisimple Lie algebra, $\langle\cdot,\cdot\rangle$ its Killing form and $\mathfrak{h}$ a Lie subalgebra of $\mathfrak{g}$ such that:
\begin{equation}\label{conditionh}
\begin{cases}
\mathfrak{h}\text{ is reductive}\\
\mathrm{rk}_\mathbb{C}(\mathfrak{g})=\mathrm{rk}_\mathbb{C}(\mathfrak{h}).
\end{cases}
\end{equation}
Then, the restriction $\langle\cdot,\cdot\rangle_{\mid\mathfrak{h}\times\mathfrak{h}}$ of $\langle\cdot,\cdot\rangle$ on $\mathfrak{h}$  is nondegenerate and there is therefore a decomposition:
\begin{equation*}
\mathfrak{g}=\mathfrak{h}\oplus\mathfrak{q},
\end{equation*}   
where $\mathfrak{q}$ is the orthogonal complement of $\mathfrak{h}$ in $\mathfrak{g}$ with respect to $\langle\cdot,\cdot\rangle$. Consider the Clifford algebra $\mathbf{C}(\mathfrak{q})$ of $\mathfrak{q}$, i.e. the quotient of the tensor algebra $T(\mathfrak{q})$ of $\mathfrak{q}$ by the ideal generated by elements 
\begin{equation*}
X\otimes Y+Y\otimes X-\langle X,Y\rangle \text{ for } X,Y\in\mathfrak{q}.
\end{equation*} 
Let $\gamma:\mathfrak{q}\rightarrow \mathbf{C}(\mathfrak{q})$ be the composition of the injection of $\mathfrak{q}$ into $T(\mathfrak{q})$ and the projection onto $\mathbf{C}(\mathfrak{q})$.
The algebra $\mathbf{C}(\mathfrak{q})$, up to equivalence, has
a unique irreducible module S, called space of spinors for $\mathbf{C}(\mathfrak{q})$ \cite{goodman}. Moreover, there is a Hermitian form $\langle\cdot,\cdot\rangle_S$ on $S$ such that the operator $\gamma(X)$ is self-adjoint for every $X\in\mathfrak{q}$ \cite{thesis}. 

Let $\mathfrak{t}$ be a Cartan subalgebra of $\mathfrak{g}$ contained in $\mathfrak{h}$. The root system $\Delta:=\Delta(\mathfrak{g},\mathfrak{t})$ splits into $\Delta=\Delta_\mathfrak{h}\sqcup\Delta_\mathfrak{q}$ so that 
\begin{equation*}
\mathfrak{h}=\mathfrak{t}\oplus\bigoplus_{\alpha\in\Delta_\mathfrak{h}}\mathfrak{g}_\alpha \text{ and }
\mathfrak{q}=\bigoplus_{\beta\in\Delta_\mathfrak{q}}\mathfrak{g}_\beta.
\end{equation*}
Choose a positive system $\Delta^+\subset\Delta$ for $\Delta$ and set $\Delta_\mathfrak{h}^+:=\Delta^+\cap\Delta_\mathfrak{h}$ to be a positive system for the root system $\Delta_\mathfrak{h}$, and $\Delta_\mathfrak{q}^+:=\Delta^+\cap\Delta_\mathfrak{q}$.
In particular, $\mathfrak{q}=\mathfrak{q}^+\oplus\mathfrak{q}^-$ where
\begin{equation*}
\mathfrak{q}^+:=\bigoplus_{\beta\in\Delta_\mathfrak{q}^+}\mathfrak{g}_{\beta}\quad\text{and}\quad\mathfrak{q}^-:=\bigoplus_{\beta\in\Delta_\mathfrak{q}^+}\mathfrak{g}_{-\beta}
\end{equation*}
are dual maximal isotropic subspaces of $\mathfrak{q}$.
Now $S$ can be chosen to be 
\begin{equation*}
S:=\bigwedge \mathfrak{q}^-
\end{equation*}
with the $\mathbf{C}(\mathfrak{q})$-action being given by the Clifford multiplication.
Moreover, there is a Lie algebra monomorphism $\varphi:\mathfrak{so}(\mathfrak{q})\rightarrow \mathbf{C}(\mathfrak{q})$ so that $S$ becomes a Lie algebra representation of $\mathfrak{h}$, known as the spin representation, via the composition map
\begin{equation}\label{haction}
\mathfrak{h}\overset{\mathrm{ad}}{\longrightarrow} \mathfrak{so}(\mathfrak{q})\overset{\varphi}{\longrightarrow} \mathbf{C}(\mathfrak{q})\overset{\gamma}{\longrightarrow} \mathrm{End}(S).
\end{equation}
As an $\mathfrak{h}$-representation, $S$ can be decomposed into a direct sum of its weight subspaces. More precisely, let $\beta_1,\ldots,\beta_l$
be the positive weights of $\mathfrak{q}$, repeated according to their multiplicities, and $e_{\pm 1},\ldots,e_{\pm k}$ the corresponding weight vectors of weights $\pm\beta_1,\ldots,\pm\beta_l$ respectively, such that 

\begin{equation*}
\langle e_i,e_j\rangle=
\begin{cases}
1&\text{if }i+j=0\\
0& \text{otherwise.}
\end{cases}
\end{equation*}
If $I=(i_1,\ldots,i_s)$ with $1\leq s\leq k $, the element
\begin{equation}\label{uI}
u_I:=e_{-i_1}\wedge\ldots\wedge e_{-i_s}\in S
\end{equation}
is a weight vector of $S$ of weight
\begin{equation}\label{Sweight}\rho-\rho_\mathfrak{h}-\sum\limits_{i\in I}\beta_i=\frac{1}{2}\big\{\sum\limits_{i\notin I}\beta_i-\sum\limits_{i\in I}\beta_i\big\},
\end{equation}
where $\rho$ (respectively $\rho_\mathfrak{h}$) is the half-sum of the positive roots of $\Delta^+$ (respectively $\Delta_\mathfrak{h}^+$).
Here, by abuse of notation, we write $i\in I$ if $i\in\{i_1,\ldots,i_s\}$.

\section{Noncubic Dirac operators}

\subsection{$t$-noncubic Dirac operators}\label{main}
%
%
We keep the previous notation. Namely, $\mathfrak{g}$ is a complex semisimple Lie algebra, $\mathfrak{h}$ a Lie subalgebra of $\mathfrak{g}$ satisfying condition \eqref{conditionh} with $\mathfrak{t}$ being a common Cartan subalgebra of $\mathfrak{g}$ and $\mathfrak{h}$, while $\mathfrak{q}$ is the orthogonal complement of $\mathfrak{h}$ with respect to the Killing form $\langle\cdot,\cdot\rangle$ of $\mathfrak{g}$.
To every finite-dimensional  representation $(\pi,V)$ of $\mathfrak{g}$, we attach a family of operators
\begin{equation*}
D^t_{\mathfrak{g},\mathfrak{h}}(V):V\otimes S\rightarrow V\otimes S,\quad t\in[0,2]
\end{equation*}
defined by
\begin{equation*}
D^t_{\mathfrak{g},\mathfrak{h}}(V)=\sqrt{2}\big\{\sum_{i,j}\langle\tilde{e}_i,\tilde{e}_j\rangle\pi(e_i)\otimes \gamma(e_j)-t\big(1\otimes\gamma(c)\big)\big\},
\end{equation*}
where $\{\tilde{e}_i\}$ and $\{e_i\}$ are dual bases of $\mathfrak{q}$ with respect to the Killing form $\langle\cdot,\cdot\rangle$ and $\gamma(c)$ is the cubic term
\begin{equation}\label{cub}
\gamma(c)=\frac{1}{6}\sum_{i,j,k}\langle\tilde{e}_i,[\tilde{e}_j,\tilde{e}_k]\rangle\gamma(e_i)\gamma(e_j)\gamma(e_k).
\end{equation}

\begin{definition}[Noncubic Dirac operators]\label{noncubic}\index{Dirac operator!noncubic}
	For every $t\in[0,1)\cup(1,2]$, the operator $D^t_{\mathfrak{g},\mathfrak{h}}(V)$
	is called $t$-noncubic Dirac operator. In case when $t=0$, the operator $\widehat{D}_{\mathfrak{g},\mathfrak{h}}(V):=D^0_{\mathfrak{g},\mathfrak{h}}(V)$ is simply called noncubic Dirac operator.
\end{definition}

\begin{remark}\label{remarkcoinc} Note that for $t=1$, one obtains the cubic Dirac operator $D_{\mathfrak{g},\mathfrak{h}}(V)$ of \eqref{diracintro}. Moreover, in the case where $\mathfrak{h}$ is symmetric, i.e. $\mathfrak{h}$ is the set of fixed elements by an involution $\sigma$ of $\mathfrak{g}$, the cubic term $\gamma(c)$ vanishes so that every operator $D^t_{\mathfrak{g},\mathfrak{h}}(V)$ coincides with $D_{\mathfrak{g},\mathfrak{h}}(V)$.
\end{remark}


As for $D_{\mathfrak{g},\mathfrak{h}}(V)$, one can check that $D^t_{\mathfrak{g},\mathfrak{h}}(V)$ is independent of the choice of the bases, $\mathfrak{h}$-equivariant and self-adjoint with respect to a Hermitian form defined on $V\otimes S$ \cite{thesis}. In particular, the kernel $\ker D^t_{\mathfrak{g},\mathfrak{h}}(V)$ of $D^t_{\mathfrak{g},\mathfrak{h}}(V)$ is a representation of $\mathfrak{h}$. We are interested in describing this kernel. Nevertheless, the main difficulty is that, unlike $D_{\mathfrak{g},\mathfrak{h}}(V)$, provided that $\mathfrak{h}$ is not symmetric, $D^t_{\mathfrak{g},\mathfrak{h}}(V)^2$ has not the good form \eqref{Dsquare} of $D_{\mathfrak{g},\mathfrak{h}}(V)^2$, which is crucial for the description \eqref{kernel} of the kernel $\ker D_{\mathfrak{g},\mathfrak{h}}(V)$ of $D_{\mathfrak{g},\mathfrak{h}}(V)$.
%
%
A first result in this direction is the following.

\begin{proposition}\label{kerinclusion}
	For every $t\in[0,2]$, we have the inclusion
	\begin{equation*}
	\ker D_{\mathfrak{g},\mathfrak{h}}(V)\subseteq \ker D^t_{\mathfrak{g},\mathfrak{h}}(V).
	\end{equation*}
\end{proposition}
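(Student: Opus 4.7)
The strategy begins with the elementary observation that the family of operators is affine in the parameter:
$$D^t_{\mathfrak{g},\mathfrak{h}}(V)=D_{\mathfrak{g},\mathfrak{h}}(V)+(1-t)\sqrt{2}\,(1\otimes\gamma(c)).$$
Consequently, for any $v\in\ker D_{\mathfrak{g},\mathfrak{h}}(V)$ we have $D^t_{\mathfrak{g},\mathfrak{h}}(V)v=(1-t)\sqrt{2}\,(1\otimes\gamma(c))v$, so the proposition is equivalent to the single statement that $1\otimes\gamma(c)$ annihilates $\ker D_{\mathfrak{g},\mathfrak{h}}(V)$. This reformulation is what I propose to establish.

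The plan is to combine two structural features of the $\mathfrak{h}$-module $V\otimes S$. The first is the $\mathbb{Z}/2$-grading $V\otimes S=(V\otimes S^+)\oplus(V\otimes S^-)$ inherited from the parity decomposition of the spin module. Since $c$ is a sum of products of three generators of $\mathbf{C}(\mathfrak{q})$, it is an odd element, and hence $1\otimes\gamma(c)$ interchanges the two graded pieces. The second feature, extracted from \eqref{Dsquare}, is that $D_{\mathfrak{g},\mathfrak{h}}(V)^2$ is $\mathfrak{h}$-equivariant and acts on any $F_\mu$-isotypic component of $V\otimes S$ by the scalar $\|\lambda+\rho\|^2-\|\mu+\rho_\mathfrak{h}\|^2$. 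Because $\lambda$ is dominant integral, $\lambda+\rho$ is strictly dominant and therefore regular, so the weights $\{w(\lambda+\rho)-\rho_\mathfrak{h}\mid w\in W^1\}$ appearing in Kostant's decomposition \eqref{kernel} are pairwise distinct. Matching \eqref{kernel} with the observation that $\ker D_{\mathfrak{g},\mathfrak{h}}(V)=\ker D_{\mathfrak{g},\mathfrak{h}}(V)^2$ contains every $F_\mu$-isotypic component for which the eigenvalue above vanishes forces each such $F_{w(\lambda+\rho)-\rho_\mathfrak{h}}$ to occur in $V\otimes S$ with multiplicity exactly one.

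With that multiplicity-one fact in hand, the argument concludes in one step: the unique copy of $F_{w(\lambda+\rho)-\rho_\mathfrak{h}}$ sits entirely in one of the graded pieces $V\otimes S^{\pm}$, say $V\otimes S^{\varepsilon}$, and $1\otimes\gamma(c)$ being $\mathfrak{h}$-equivariant and odd sends it into the $F_{w(\lambda+\rho)-\rho_\mathfrak{h}}$-isotypic subspace of $V\otimes S^{-\varepsilon}$, which is trivial. Hence $1\otimes\gamma(c)$ kills each summand of \eqref{kernel}, proving the inclusion $\ker D_{\mathfrak{g},\mathfrak{h}}(V)\subseteq\ker D^t_{\mathfrak{g},\mathfrak{h}}(V)$.

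The main place where I expect to slow down is the multiplicity-one step. It is where Kostant's explicit formula \eqref{kernel}, the Casimir formula \eqref{Dsquare}, and the regularity of $\lambda+\rho$ must be pieced together; everything afterwards is a formal consequence of $\mathfrak{h}$-equivariance and the parity of $\gamma(c)$.
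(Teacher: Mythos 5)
Your argument is correct, but it takes a genuinely different route from the paper's. The paper makes the same first reduction (since $D^t_{\mathfrak{g},\mathfrak{h}}(V)=tD_{\mathfrak{g},\mathfrak{h}}(V)+(1-t)\widehat{D}_{\mathfrak{g},\mathfrak{h}}(V)$, it suffices to treat $t=0$), but then proves $\ker D_{\mathfrak{g},\mathfrak{h}}(V)\subseteq\ker\widehat{D}_{\mathfrak{g},\mathfrak{h}}(V)$ directly on Kostant's summands: for each $w\in W^1$ it passes to the positive system $w\Delta^+$, exhibits the extremal vector $v_{w\lambda}\otimes u_{w\rho-\rho_{\mathfrak{h}}}$, checks that each of the two halves $\sqrt{2}\sum e_\alpha\otimes\gamma(e_{-\alpha})$ and $\sqrt{2}\sum e_{-\alpha}\otimes\gamma(e_{\alpha})$ of $\widehat{D}_{\mathfrak{g},\mathfrak{h}}(V)$ kills it separately, and then uses $\mathfrak{h}$-equivariance and irreducibility of $F_{w(\lambda+\rho)-\rho_{\mathfrak{h}}}$ to annihilate the whole copy. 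You instead prove the equivalent statement that $1\otimes\gamma(c)$ vanishes on $\ker D_{\mathfrak{g},\mathfrak{h}}(V)$, by combining (i) the multiplicity-one fact that each $F_{w(\lambda+\rho)-\rho_{\mathfrak{h}}}$ occurs exactly once in all of $V\otimes S$, which does follow from \eqref{Dsquare}, Kostant's formula \eqref{kernel}, regularity of $\lambda+\rho$, and $\ker D_{\mathfrak{g},\mathfrak{h}}(V)=\ker D_{\mathfrak{g},\mathfrak{h}}(V)^2$ (the latter because $D_{\mathfrak{g},\mathfrak{h}}(V)$ is self-adjoint for a positive-definite form when $V$ is finite dimensional, hence diagonalizable --- worth stating explicitly, but it is exactly what the paper records), with (ii) the parity argument: $\mathfrak{h}$ acts on $S$ by even Clifford elements, so $S^{\pm}$ are $\mathfrak{h}$-submodules, while $\gamma(c)$ is odd and $1\otimes\gamma(c)=\bigl(\widehat{D}_{\mathfrak{g},\mathfrak{h}}(V)-D_{\mathfrak{g},\mathfrak{h}}(V)\bigr)/\sqrt{2}$ is $\mathfrak{h}$-equivariant, so it must map the unique copy into the vanishing isotypic part of the opposite parity. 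Both proofs are sound. The paper's is more elementary and explicit: it needs neither diagonalizability nor the multiplicity-one statement, and the same vector-level computation is reused later (compare Proposition \ref{intersectionD} and Theorem \ref{property*}). Yours is more structural: it isolates the multiplicity-one fact, which is of independent interest, and it gives the slightly sharper conclusion that the cubic term annihilates $\ker D_{\mathfrak{g},\mathfrak{h}}(V)$, i.e. all the operators $D^t_{\mathfrak{g},\mathfrak{h}}(V)$, $t\in[0,2]$, literally coincide on $\ker D_{\mathfrak{g},\mathfrak{h}}(V)$.
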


\begin{proof} It suffices to show that $\ker D_{\mathfrak{g},\mathfrak{h}}(V)\subseteq\ker\widehat{D}_{\mathfrak{g},\mathfrak{h}}(V)$. Indeed, provided this is true, if $x\in\ker D_{\mathfrak{g},\mathfrak{h}}(V)$, then 
	\begin{align*}
	D^t_{\mathfrak{g},\mathfrak{h}}(V)(x)&=\big(D_{\mathfrak{g},\mathfrak{h}}(V)+(1-t)\sqrt{2}\otimes \gamma(c)\big)(x)\\
	&=tD_{\mathfrak{g},\mathfrak{h}}(V)(x)+(1-t)\widehat{D}_{\mathfrak{g},\mathfrak{h}}(V)(x)\\
	&=0
	\end{align*}
	and so $\ker D_{\mathfrak{g},\mathfrak{h}}(V)\subseteq\ker D^t_{\mathfrak{g},\mathfrak{h}}(V)$.

	Let $V$ be of highest weight $\lambda\in\mathfrak{t}^*$ and $v_\lambda$ be the highest weight vector of $V$. Moreover, for every algebraically integral $\Delta_\mathfrak{h}^+$-dominant element $\mu\in\mathfrak{t}^*$, let $F_{\mu}$ be the corresponding finite-dimensional irreducible $\mathfrak{h}$-representation of highest weight $\mu\in\mathfrak{t}^*$. 
	According to Kostant's formula \eqref{kernel}, we will be finished once we show that, for every $w$ in
	$
	W^1=\{w\in W\mid \Delta_\mathfrak{h}^+\subseteq w\Delta^+\}$
	the representation $F_{w(\lambda+\rho)-\rho_\mathfrak{h}}$ belongs to $\ker\widehat{D}_{\mathfrak{g},\mathfrak{h}}(V)$. 
	
	First, we consider $F_{\lambda+\rho-\rho_{\mathfrak{h}}}$. 
	By definition, $1\in S$ is a maximal weight vector of $S$. 
	Let $\{e_\alpha \}$ and $\{e_{-\alpha} \}$ be root vector bases of $\mathfrak{q}^+$ and $\mathfrak{q}^-$ respectively such that $\langle e_\alpha,e_{-\alpha}\rangle=1$. Then the noncubic Dirac operator is given by
	\begin{equation*}
	\widehat{D}_{\mathfrak{g},\mathfrak{h}}(V)=\sqrt{2}\sum_{\alpha\in\Delta^+_\mathfrak{q}}e_\alpha\otimes\gamma(e_{-\alpha})
	+\sqrt{2}\sum_{\alpha\in\Delta^+_\mathfrak{q}}
	e_{-\alpha}\otimes\gamma(e_{\alpha}).
	\end{equation*}
	Set \begin{equation*}\widehat{D}_1:=\sqrt{2}\sum_{\alpha\in\Delta^+_\mathfrak{q}}e_\alpha\otimes\gamma(e_{-\alpha})
	\end{equation*} and \begin{equation*}\widehat{D}_2:=\sqrt{2}\sum_{\alpha\in\Delta^+_\mathfrak{q}}e_{-\alpha}\otimes\gamma(e_{\alpha}).\end{equation*}
	Then $\widehat{D}_1$ (respectively $\widehat{D}_2$) acts trivially on the first factor (respectively second) of $v_\lambda\otimes1$ and thus $\widehat{D}_{\mathfrak{g},\mathfrak{h}}(V)$ acts trivially on $v_\lambda\otimes 1$. Using the facts that $F_{\lambda+\rho-\rho_{\mathfrak{h}}}$ is irreducible and $\widehat{D}_{\mathfrak{g},\mathfrak{h}}(V)$ is $\mathfrak{h}$-equivariant, one deduces that the whole representation $F_{\lambda+\rho-\rho_{\mathfrak{h}}}$, generated by $v_\lambda\otimes 1$ as a $U(\mathfrak{h})$-module, belongs to $\ker\widehat{D}_{\mathfrak{g},\mathfrak{h}}(V)$. 
	
	To treat the general case of a subrepresentation $F_{w(\lambda+\rho)-\rho_{\mathfrak{h}}}$ of $\ker D_{\mathfrak{g},\mathfrak{h}}(V)$, for $w\in W^1$, we choose $w\Delta^+$ to be the positive system of $\Delta$. Note that by the definition of $W^1$, $\Delta^+_\mathfrak{h}\subseteq w\Delta^+$. Therefore $w\lambda$ is the highest weight of $V$ with respect to this choice while $w(\rho-\rho_{\mathfrak{h}})=w\rho-\rho_{\mathfrak{h}}$ is a maximal weight of $S$ with respect to $\Delta_\mathfrak{h}^+$. 
	The last statement can be seen by using $w\Delta^+$ instead of $\Delta^+$ in the choice of the space of spinors for $\mathbf{C}(\mathfrak{q})$. More precisely, we define $S$ using again the negative roots of $\mathfrak{q}$ but now with respect to $w\Delta^+$. Note that, due to \cite[Theorem 6.1.3]{goodman}, the space of spinors $S$ does not depend on the choice of the positive system for $\Delta$.
	According to formula \eqref{Sweight}, $w\rho-\rho_\mathfrak{h}$ is a highest weight of $S$.
	Therefore, if $v_{w\lambda}$ and $u_{w\rho-\rho_{\mathfrak{h}}}$ are the corresponding highest weight vectors of $V$ and $S$ respectively, one has for every $\alpha\in w\Delta^+_\mathfrak{q}$:
	\begin{align*}
	\pi(e_\alpha)v_{w\lambda}&=0,\\
	\gamma(e_\alpha)u_{w\rho-\rho_{\mathfrak{h}}}&=0.\end{align*}
	Arguing as above, one deduces that $F_{w(\lambda+\rho)-\rho_\mathfrak{h}}$ is contained in $\ker\widehat{D}_{\mathfrak{g},\mathfrak{h}}(V)$. Since $w\in W^1$ was arbitrary, we conclude that $\ker D_{\mathfrak{g},\mathfrak{h}}(V)\subseteq\ker\widehat{D}_{\mathfrak{g},\mathfrak{h}}(V)$.
\end{proof}

\subsection{Weyl's inequalities}\label{weylssubsection}
The following inequalities, known as Weyl's inequalities, turn out to be essential for the study of $\ker D^t_{\mathfrak{g},\mathfrak{h}}(V)$.

\begin{proposition}[Weyl's inequalities]
	Let $A$ and $B$ be two Hermitian operators acting on a $n$-dimensional Hilbert space $\mathcal{H}$. Let $\lambda_i(A)$, $1\leq i\leq n$, (respectively $\lambda_i(B)$ and $\lambda_i(A+B)$) be the (real) eigenvalues of $A$ (respectively $B$ and $A+B$) in descending ordering counting multiplicities. Then, one has:
	
	\begin{equation}\label{wineq}
	\lambda_{i+j-1}(A+B)\leq\lambda_i(A)+\lambda_j(B),
	\end{equation}
	whenever the indices make sense.
\end{proposition}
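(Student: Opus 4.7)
The plan is to derive Weyl's inequalities from the Courant--Fischer min-max characterization of eigenvalues of a Hermitian operator $T$ on $\mathcal{H}$, namely
\begin{equation*}
\lambda_k(T) = \min_{\substack{U \subseteq \mathcal{H} \\ \dim U = n-k+1}} \; \max_{\substack{v \in U \\ \|v\|=1}} \langle Tv, v\rangle.
\end{equation*}
The strategy is to exhibit a single subspace on which the Rayleigh quotient of $A+B$ is controlled by $\lambda_i(A)+\lambda_j(B)$ and which has dimension large enough to serve as a witness in the min-max formula for $\lambda_{i+j-1}(A+B)$.

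First, I would diagonalize $A$ and $B$ separately: pick orthonormal eigenbases $\{v_1^A,\ldots,v_n^A\}$ and $\{v_1^B,\ldots,v_n^B\}$ of $A$ and $B$ with eigenvalues in descending order, and set
\begin{equation*}
V_i := \mathrm{span}\{v_i^A, v_{i+1}^A, \ldots, v_n^A\}, \qquad W_j := \mathrm{span}\{v_j^B, v_{j+1}^B, \ldots, v_n^B\},
\end{equation*}
so that $\dim V_i = n-i+1$ and $\dim W_j = n-j+1$. The standard dimension formula gives
\begin{equation*}
\dim(V_i \cap W_j) \geq (n-i+1) + (n-j+1) - n = n - (i+j-1) + 1,
\end{equation*}
which is exactly the dimension required by the min-max formula for $\lambda_{i+j-1}(A+B)$ (if the intersection is strictly larger, shrink to a subspace of the correct dimension).

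Next I would bound the Rayleigh quotient on $V_i \cap W_j$. For a unit vector $v \in V_i$, the spectral expansion of $A$ yields $\langle Av, v\rangle \leq \lambda_i(A)$, since $V_i$ is spanned by eigenvectors of $A$ with eigenvalues at most $\lambda_i(A)$; the analogous bound $\langle Bv, v\rangle \leq \lambda_j(B)$ holds on $W_j$. Hence for any unit $v \in V_i \cap W_j$,
\begin{equation*}
\langle (A+B)v, v\rangle = \langle Av, v\rangle + \langle Bv, v\rangle \leq \lambda_i(A) + \lambda_j(B).
\end{equation*}

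Finally, applying the min-max formula to $A+B$ with $k=i+j-1$ and using (a subspace of) $V_i \cap W_j$ as a test space gives
\begin{equation*}
\lambda_{i+j-1}(A+B) \leq \max_{\substack{v \in V_i \cap W_j \\ \|v\|=1}} \langle (A+B)v, v\rangle \leq \lambda_i(A) + \lambda_j(B),
\end{equation*}
which is the desired inequality. There is no serious obstacle here: the argument is a classical one and the only points requiring care are to ensure that the indices $i,j$ and $i+j-1$ all lie in $\{1,\ldots,n\}$ (this is exactly the clause ``whenever the indices make sense'') and to justify the Courant--Fischer min-max formula, which follows from spectral theory for Hermitian operators on a finite-dimensional Hilbert space.
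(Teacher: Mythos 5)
Your proof is correct: it is the standard Courant--Fischer min-max argument (tail eigenspaces of $A$ and $B$, dimension count on their intersection, Rayleigh quotient bound), which is exactly the approach the paper points to, since it gives no proof of its own and only cites references based on the min-max principle.
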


A proof of Weyl's inequalities is based on min-max principle and can be found in \cite{brion,tao}.

\subsection{Related weights}
Consider the decomposition 
\begin{equation*}
V\otimes S=\bigoplus_{\mu\in\mathfrak{t}^*}(V\otimes S)_\mu
\end{equation*}
of $V\otimes S$ into isotypic components. From the $\mathfrak{h}$-invariance of $D^t_{\mathfrak{g},\mathfrak{h}}(V)$, each isotypic component $(V\otimes S)_\mu$ is stable under the action of $D^t_{\mathfrak{g},\mathfrak{h}}(V)$. Therefore, in order to calculate $\ker D^t_{\mathfrak{g},\mathfrak{h}}(V)$, it suffices to examine how each isotypic component can contribute to $\ker D^t_{\mathfrak{g},\mathfrak{h}}(V)$. 

Fix an isotypic component $(V\otimes S)_\mu$ and let $\lambda_i^{(\mu)}(D^t_{\mathfrak{g},\mathfrak{h}}(V))$ be the (real) eigenvalues in descending ordering and counting multiplicities of $D^t_{\mathfrak{g},\mathfrak{h}}(V)$ restricted to $(V\otimes S)_\mu$ and $\lVert \sqrt{2}\otimes\gamma(c)\rVert^{(\mu)}$ the operator-norm of $\sqrt{2}\otimes \gamma(c)$ on $(V\otimes S)_\mu$. Then, taking $A=D_{\mathfrak{g},\mathfrak{h}}(V)$, $B=(1-t)\big(\sqrt{2}\otimes \gamma(c)\big)$ and $j=1$, the inequality \eqref{wineq} becomes
\begin{subequations}\label{blockineq}
	\begin{equation}
	\lambda_i^{(\mu)}(D^t_{\mathfrak{g},\mathfrak{h}}(V))\leq\lambda_i^{(\mu)}(D_{\mathfrak{g},\mathfrak{h}}(V))+\lvert1-t\rvert\cdot\lVert \sqrt{2}\otimes\gamma(c)\rVert^{(\mu)}
	\end{equation}
	and, exchanging the roles of $D_{\mathfrak{g},\mathfrak{h}}(V)$ and $(1-t)\big(\sqrt{2}\otimes \gamma(c)\big)$: 
	\begin{equation}\label{otherineq}
	\lambda_i^{(\mu)}(D_{\mathfrak{g},\mathfrak{h}}(V))-\lvert1-t\rvert\cdot\lVert \sqrt{2}\otimes\gamma(c)\rVert^{(\mu)}\leq\lambda_i^{(\mu)}(D^t_{\mathfrak{g},\mathfrak{h}}(V)).
	\end{equation}
\end{subequations}

The operator $D_{\mathfrak{g},\mathfrak{h}}(V)$ acts on $(V\otimes S)_\mu$ and is diagonalizable while $D_{\mathfrak{g},\mathfrak{h}}(V)^2$ acts by the scalar 
\begin{equation*}
\lVert\lambda+\rho\rVert^2-\lVert\mu+\rho_\mathfrak{h}\rVert^2.
\end{equation*}
Hence, the eigenvalues $\lambda_i(D_{\mathfrak{g},\mathfrak{h}}(V))$ of $D_{\mathfrak{g},\mathfrak{h}}(V)$ restricted to $(V\otimes S)_\mu$  are
\begin{equation*}
\pm\sqrt{\lVert\lambda+\rho\rVert^2-\lVert\mu+\rho_\mathfrak{h}\rVert^2}.
\end{equation*}

Let us see what happens with the eigenvalues of $\sqrt{2}\otimes\gamma(c)$ on such an isotypic component. One checks that the actions of $\sqrt{2}\otimes\gamma(c)$ and $-D_{\mathfrak{g},\mathfrak{h}}(V_0)$ coincide if $V_0$ is the trivial representation of $\mathfrak{g}$. Consequently, $\big(\sqrt{2}\otimes\gamma(c)\big)^2$ acts by the scalar 
\begin{equation*}
\lVert\rho\rVert^2-\lVert\mu_1+\rho_\mathfrak{h}\rVert^2
\end{equation*}
on the $\mu_1$-isotypic component $(V_0\otimes S)_{\mu_1}$ of $V_0\otimes S$ and the eigenvalues of $\sqrt{2}\otimes \gamma(c)$ in $V_0\otimes S_{\mu_1}$ are exactly 
\begin{equation*}\pm\sqrt{\lVert\rho\rVert^2-\lVert\mu_1+\rho_\mathfrak{h}\rVert^2}.\end{equation*} 
Therefore the eigenvalues of $\sqrt{2}\otimes\gamma(c)$ on $(V\otimes S)_\mu$ are of this form, when $\mu_1$ is a weight of $S$ such that $S_{\mu_1}$ is involved in $(V\otimes S)_\mu$. In other words, $\mu_1$ is such that $\mu-\mu_1$ is a weight of $V$. 

\begin{definition}[Related weight]\label{related}
	When the weight $\mu_1\in\mathfrak{t}^*$ of $S$ is such that $\mu-\mu_1$ is a weight of $\hspace{1mm}V$, we say that $\mu_1$ is related to $\mu$.
\end{definition}

\begin{theorem}\label{inequal}
	If $\mu\in\mathfrak{t}^*$ is a weight of $\hspace{1mm}V\otimes S$ and $\mu_1\in\mathfrak{t}^*$ is related to $\mu$, then 
	
	\begin{equation}\label{ineq}
	\lVert\lambda+\rho\rVert^2-\lVert\mu+\rho_\mathfrak{h}\rVert^2\geq\lVert\rho\rVert^2-\lVert\mu_1+\rho_\mathfrak{h}\rVert^2.
	\end{equation}
	The equality holds if and only if $\mu-\mu_1$ is an extremal weight of $\hspace{1mm}V$ and $\rho-w(\mu_1+\rho_\mathfrak{h})$ is orthogonal to $\lambda$, with $w\in W$ being such that $w(\mu-\mu_1)$ is $\Delta^+$-dominant. In this case $\rho-w(\mu_1+\rho_\mathfrak{h})$ can be written as a sum $\sum\limits_{\alpha\in A}\alpha$, for some $A\subseteq \Delta^+$ and $\langle\alpha,\lambda\rangle=0$ for every $\alpha$ in $A$.
	
\end{theorem}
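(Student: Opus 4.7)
The plan is to rewrite the two-side difference in \eqref{ineq} as a sum of two manifestly non-negative terms, and then read off the equality condition from the vanishing of each. First I would set $\nu := \mu - \mu_1$, which is a weight of $V$ by hypothesis, and write $\mu_1 + \rho_\mathfrak{h} = \rho - \tau$ with $\tau := \sum_{\beta \in B}\beta$ and $B \subseteq \Delta_\mathfrak{q}^+$, as forced by \eqref{Sweight}. I would then choose $w \in W$ so that $\nu' := w\nu$ is $\Delta^+$-dominant; since $\nu$ is a weight of $V_\lambda$, the dominant element $\nu'$ of its $W$-orbit satisfies $\nu' \preceq \lambda$, so $\eta := \lambda - \nu'$ is a non-negative integer combination of positive roots. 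Setting $\xi := \mu_1 + \rho_\mathfrak{h}$ and using $W$-invariance of $\langle \cdot,\cdot\rangle$ to rewrite $\langle \nu, \xi\rangle = \langle \nu', w\xi\rangle$, a direct expansion produces the key identity
\begin{equation*}
\lVert\lambda+\rho\rVert^2 - \lVert\mu+\rho_\mathfrak{h}\rVert^2 - \lVert\rho\rVert^2 + \lVert\mu_1+\rho_\mathfrak{h}\rVert^2 = 2\langle \nu',\, \eta+\rho-w\xi\rangle + \langle \eta,\, \eta+2\rho\rangle.
\end{equation*}

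The main obstacle is to show that $\rho - w\xi$ is itself a sum of distinct positive roots. I would use the classical formula $\rho - w\rho = \sum_{\alpha \in \Phi(w)}\alpha$, where $\Phi(w) := \{\alpha \in \Delta^+ \mid w^{-1}\alpha \in -\Delta^+\}$ is the inversion set of $w$, and split $B = B_+ \sqcup B_-$ according to whether $w\beta \in \Delta^+$ or $w\beta \in -\Delta^+$. For $\beta \in B_-$ the root $-w\beta$ belongs to $\Phi(w)$ (since $-w\beta \in \Delta^+$ and $w^{-1}(-w\beta) = -\beta \in -\Delta^+$), so $\beta \mapsto -w\beta$ injects $B_-$ into $\Phi(w)$ and the $B_-$-contributions to $w\tau$ cancel against the corresponding terms of $\rho - w\rho$. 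Setting $C := \{-w\beta : \beta \in B_-\}$, this gives
\begin{equation*}
\rho - w\xi = \sum_{\alpha \in \Phi(w)\setminus C} \alpha + \sum_{\beta \in B_+} w\beta = \sum_{\alpha \in A}\alpha,
\end{equation*}
with $A := (\Phi(w)\setminus C) \sqcup w(B_+) \subseteq \Delta^+$ (the union is disjoint because $w\beta \notin \Phi(w)$ when $\beta \in B_+$). Consequently $\eta + \rho - w\xi$ is a non-negative integer combination of positive roots, and dominance of $\nu'$ yields $\langle \nu', \eta+\rho-w\xi\rangle \geq 0$. The other summand $\langle \eta, \eta+2\rho\rangle = \lVert\eta\rVert^2 + 2\langle\eta,\rho\rangle$ is non-negative since $\langle \alpha,\rho\rangle > 0$ for every $\alpha \in \Delta^+$. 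This proves \eqref{ineq}.

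For the equality statement, vanishing of the left-hand side of the identity forces both summands on the right to vanish. Since $\langle\alpha,\rho\rangle > 0$ for every $\alpha \in \Delta^+$, the second term vanishes only when $\eta = 0$, so $\nu' = \lambda$ and $\nu = w^{-1}\lambda$ is an extremal weight of $V$. Given this, the first term becomes $\langle \lambda, \sum_{\alpha \in A}\alpha\rangle = 0$; by dominance of $\lambda$ each $\langle \lambda,\alpha\rangle$ is non-negative, so the vanishing of the sum forces $\langle \lambda,\alpha\rangle = 0$ for every $\alpha \in A$. This yields both the orthogonality $\lambda \perp \rho - w\xi$ and the final assertion of the theorem about the structure of $\rho - w\xi$. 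Conversely, if $\nu$ is extremal and $\rho - w\xi$ is orthogonal to $\lambda$, then $\eta = 0$ makes the second summand vanish and the orthogonality makes the first vanish, giving equality in \eqref{ineq}.
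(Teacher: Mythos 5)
Your proof is correct, and the core strategy is the same as the paper's: expand the difference of the two sides of \eqref{ineq}, rewrite it as a sum of manifestly non-negative terms using a Weyl-group element $w$ that brings $\mu-\mu_1$ to the dominant chamber, and read off the equality conditions from the vanishing of each term. Your identity $\text{LHS}-\text{RHS}=2\langle\nu',\eta+\rho-w\xi\rangle+\langle\eta,\eta+2\rho\rangle$ is exactly the reorganized form of the inequality the paper ends up with, and the equality analysis is the same.

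Where you genuinely diverge from the paper is in how you establish the crucial fact that $\rho-w(\mu_1+\rho_\mathfrak{h})$ is a sum of distinct positive roots. The paper isolates this as Lemma \ref{otherspin} and proves it by a representation-theoretic detour: the spin modules $S'_w=\bigwedge\mathfrak{n}^-_w$ built from the positive system $w\Delta^+$ and $S'=\bigwedge\mathfrak{n}^-$ built from $\Delta^+$ are isomorphic $\mathbf{C}(\mathfrak{q}')$-modules (citing \cite[Theorem 6.1.3]{goodman}), so they have the same weight set, and since $w(\rho-\sum_{\beta\in B}\beta)$ is a weight of $S'_w$ it must be of the form $\rho-\sum_{\alpha\in A}\alpha$. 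You instead give a direct, elementary combinatorial argument: write $\rho-w\xi=(\rho-w\rho)+w\tau$, use the classical inversion-set identity $\rho-w\rho=\sum_{\alpha\in\Phi(w)}\alpha$ with $\Phi(w)=\Delta^+\cap w\Delta^-$, split $B=B_+\sqcup B_-$ by the sign of $w\beta$, observe that $\beta\mapsto -w\beta$ injects $B_-$ into $\Phi(w)$ so those contributions cancel, and check disjointness of what survives. Both reach the same conclusion, but your version is self-contained and avoids invoking the uniqueness of the spinor module; on the other hand, the paper's Lemma \ref{otherspin} does additional work elsewhere (it is reused in the proof of Lemma \ref{dominant} to show the weight set of $S'$ is $W$-stable), so the module-theoretic framing is not wasted effort in the paper's economy. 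Either route is valid; yours is arguably more transparent at this particular spot.

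One small presentational remark: your splitting of the difference into $2\langle\nu',\eta+\rho-w\xi\rangle$ and $\langle\eta,\eta+2\rho\rangle$ has the advantage that the second term alone immediately forces $\eta=0$ at equality (since $\langle\eta,\rho\rangle>0$ for any nonzero nonnegative integral combination $\eta$ of positive roots), whereas the paper's phrasing ``$(\lVert\lambda\rVert^2-\lVert\mu-\mu_1\rVert^2)+2\langle\lambda-w(\mu-\mu_1),\rho\rangle=0$ which means that $\lambda=w(\mu-\mu_1)$'' tacitly uses that both summands are separately non-negative. Your reorganization makes this cleaner.
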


Before we prove this statement, we will first prove the following lemma.

\begin{lemma}\label{otherspin}
	Let $B$ be a subset of $\Delta^+$ and $w$ be an element of the Weyl group $W$. Then there is a subset $A$ of $\Delta^+$ such that
	
	\begin{equation*}
	w(\rho-\sum\limits_{\beta\in B}\beta)=\rho-\sum\limits_{\alpha\in A}\alpha.
	\end{equation*}	
\end{lemma}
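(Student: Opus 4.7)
The plan is to rewrite $\rho - \sum_{\beta\in B}\beta$ as one half the sum of the roots belonging to a certain ``signed positive system'' (a set containing exactly one root from each pair $\{\alpha,-\alpha\}$), then transport this set by $w$ and read off the new subset $A$.

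More concretely, I would first observe that since $\rho = \tfrac{1}{2}\sum_{\alpha\in\Delta^+}\alpha$, one has
\begin{equation*}
\rho - \sum_{\beta\in B}\beta \;=\; \tfrac{1}{2}\!\sum_{\alpha\in\Delta^+\setminus B}\alpha \;-\; \tfrac{1}{2}\sum_{\beta\in B}\beta \;=\; \tfrac{1}{2}\sum_{\gamma\in C}\gamma,
\end{equation*}
where $C := (\Delta^+\setminus B)\,\sqcup\,\{-\beta : \beta\in B\}\subseteq\Delta$. The key feature of $C$ is that it contains exactly one element from each pair $\{\alpha,-\alpha\}$ of opposite roots.

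Next, I would apply $w$. Since $w$ permutes $\Delta$ and sends $\{\alpha,-\alpha\}$ to $\{w\alpha,-w\alpha\}$, the image $wC$ is again a subset of $\Delta$ containing exactly one root from each opposite pair. Define
\begin{equation*}
A \;:=\; \{\alpha\in\Delta^+ \;:\; -\alpha\in wC\}\;\subseteq\;\Delta^+.
\end{equation*}
Then $wC=(\Delta^+\setminus A)\,\sqcup\,\{-\alpha:\alpha\in A\}$, and applying $w$ to the displayed identity above yields
\begin{equation*}
w\Bigl(\rho-\sum_{\beta\in B}\beta\Bigr) \;=\; \tfrac{1}{2}\sum_{\gamma\in wC}\gamma \;=\; \tfrac{1}{2}\!\sum_{\alpha\in\Delta^+\setminus A}\!\alpha \;-\;\tfrac{1}{2}\sum_{\alpha\in A}\alpha \;=\; \rho-\sum_{\alpha\in A}\alpha,
\end{equation*}
which is the desired equality.

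There is essentially no obstacle here; the argument is a bookkeeping identity about how the Weyl group acts on ``signed'' subsets of the root system, and the only thing to check is that the signed collection $C$ is preserved in form (one root per opposite pair) under the action of $w$, which is immediate since $w$ is a linear automorphism of $\Delta$. The step that requires the tiniest bit of care is the rewriting of $wC$ as $(\Delta^+\setminus A)\sqcup(-A)$, which uses precisely the one-root-per-pair property.
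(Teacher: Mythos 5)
Your argument is correct: writing $\rho-\sum_{\beta\in B}\beta=\tfrac12\sum_{\gamma\in C}\gamma$ with $C=(\Delta^+\setminus B)\sqcup(-B)$, noting that $C$ (and hence $wC$) contains exactly one root from each pair $\{\alpha,-\alpha\}$, and then reading off $A=\{\alpha\in\Delta^+:-\alpha\in wC\}$ is a complete proof, and the one step you flag (rewriting $wC$ as $(\Delta^+\setminus A)\sqcup(-A)$) is indeed the only point needing care and goes through. However, your route differs from the paper's. The paper does not compute directly with signed root sets; it interprets $\rho-\sum_{\beta\in B}\beta$ as a weight of the spin module $S'=\bigwedge\mathfrak{n}^-$ for $\mathbf{C}(\mathfrak{t}^\perp)$, observes that $w(\rho-\sum_{\beta\in B}\beta)$ is, by the weight formula \eqref{Sweight} applied to the positive system $w\Delta^+$, a weight of the spin module $S'_w=\bigwedge\mathfrak{n}_w^-$, and then invokes the uniqueness of the spinor module as a $\mathbf{C}(\mathfrak{q}')$-module (\cite[Theorem 6.1.3]{goodman}) to conclude that $S'_w$ and $S'$ have the same weights, so the transported element is again of the form $\rho-\sum_{\alpha\in A}\alpha$. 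Your proof is more elementary and self-contained: it needs nothing beyond the fact that $w$ permutes $\Delta$ and preserves opposite pairs, whereas the paper's proof leans on the Clifford-module uniqueness theorem. What the paper's version buys is economy within its own framework --- the spin-module weight description \eqref{Sweight} is already set up and reused throughout, so phrasing the lemma as ``the set of spin weights is $W$-stable'' integrates naturally with the later arguments (e.g.\ in the proof of Lemma \ref{dominant}); your version makes the underlying combinatorial identity explicit and would stand on its own even without any Clifford-algebra input.
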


\begin{proof}[Proof of Lemma \ref{otherspin}]
	Let $\mathfrak{h}':=\mathfrak{t}$ and $\mathfrak{q}':=(\mathfrak{h}')^\perp$. Then $\mathfrak{q}'=\mathfrak{n}\oplus\mathfrak{n}^-$, where
	\begin{equation*} 
	\mathfrak{n}=\bigoplus\limits_{\alpha\in\Delta^+}\mathfrak{g}_\alpha,\quad \mathfrak{n}^-=\bigoplus\limits_{\alpha\in\Delta^+}\mathfrak{g}_{-\alpha}.
	\end{equation*}
	Let $S':=\bigwedge\mathfrak{n}^-$ be a space of spinors for $\mathbf{C}(\mathfrak{q}')$. 
	Moreover, for $w\in W$, let $\Delta^+_w:=w\Delta^+$ and $\mathfrak{n}_w$ (respectively $\mathfrak{n}^-_w$) be the direct sum of positive (respectively negative) root spaces with respect to the positive system $\Delta^+_w$. Let $S_w':=\bigwedge\mathfrak{n}^-_w$ be the space of spinors of $\mathbf{C}(\mathfrak{q}')$. Then $w\rho$ is the half sum of positive roots of $\Delta^+_w$,  $w\rho_{\mathfrak{h}'}=0$ and the element 
	\begin{equation*}
	w\rho-\sum\limits_{\beta\in B}w\beta=w(\rho-\sum\limits_{\beta\in B}\beta),
	\end{equation*} 
	being of the form \eqref{Sweight}, is a weight of $S'_w$. On the other hand, $S_w'$ and $S'$ are isomorphic as $\mathbf{C}(\mathfrak{q}')$-modules \cite[Theorem 6.1.3]{goodman} and thus they have the same weights.  As a consequence, the element 
	\begin{equation*}
	w(\rho-\sum\limits_{\beta\in B}\beta)
	\end{equation*}
	is a weight of $S'$ and so of the form \eqref{Sweight}. In other words, there is a subset $A$ of $\Delta^+$ such that 
	
	\begin{equation*}
	w(\rho-\sum\limits_{\beta\in B}\beta)=\rho-\sum\limits_{\alpha\in A}\alpha.\qedhere
	\end{equation*}	
\end{proof}

\begin{proof}[Proof of Theorem \ref{inequal}] 
	Let $\mu\in\mathfrak{t}^*$ be a weight of $\hspace{1mm}V\otimes S$ and $\mu_1\in\mathfrak{t}^*$ be a weight of $S$ related to $\mu$. Then
	
	\begin{equation*}
	\begin{array}{crl}
	&\lVert\lambda+\rho\rVert^2-\lVert\mu+\rho_\mathfrak{h}\rVert^2&\geq\lVert\rho\rVert^2-\lVert\mu_1+\rho_\mathfrak{h}\rVert^2\\
	\Leftrightarrow&\lVert\lambda+\rho\rVert^2-\lVert\rho\rVert^2&\geq\lVert\mu+\rho_\mathfrak{h}\rVert^2-\lVert\mu_1+\rho_\mathfrak{h}\rVert^2\\
	\Leftrightarrow&\lVert\lambda\rVert^2+ 2\langle\lambda,\rho\rangle&\geq\lVert\mu-\mu_1\rVert^2+2\langle\mu-\mu_1,\mu_1+\rho_\mathfrak{h}\rangle\\
	\Leftrightarrow&(\lVert\lambda\rVert^2-\lVert\mu-\mu_1\rVert^2)+ 2\langle\lambda,\rho\rangle&\geq2\langle\mu-\mu_1,\mu_1+\rho_\mathfrak{h}\rangle.
	\end{array}
	\end{equation*}
	The weight $\mu_1$ is of the form \eqref{Sweight}, i.e. there is a subset $B$ of $\Delta^+$ (more precisely of $\Delta^+\setminus\Delta^+_\mathfrak{h}$) such that 
	\begin{equation*}
	\mu_1=\rho-\rho_\mathfrak{h}-\sum\limits_{\beta\in B}\beta
	\end{equation*}
	and thus
	\begin{equation*}
	\mu_1+\rho_\mathfrak{h}=\rho-\sum\limits_{\beta\in B}\beta.
	\end{equation*}
	Let $w$ be an element of  $W$ such that $w(\mu-\mu_1)$ is dominant. Using Lemma \ref{otherspin}, one can find a subset $A$ of $\Delta^+$ such that
	\begin{equation*}
	w(\mu_1+\rho_\mathfrak{h})=\rho-\sum\limits_{\alpha\in A}\alpha.\end{equation*}
	Consequently
	\begin{align*}
	\langle\mu-\mu_1,\mu_1+\rho_\mathfrak{h}\rangle&=\langle w(\mu-\mu_1),w(\mu_1+\rho_\mathfrak{h})\rangle\\
	&=\langle w(\mu-\mu_1),\rho-\sum\limits_{\alpha\in A}\alpha\rangle\\
	&=\langle w(\mu-\mu_1),\rho\rangle-\sum\limits_{\alpha\in A}\langle w(\mu-\mu_1),\alpha\rangle.\\
	\end{align*}
	Replacing in the above inequality we obtain
	
	\begin{equation*}
	\begin{array}{crl}
	&(\lVert\lambda\rVert^2-\lVert\mu-\mu_1\rVert^2)+ 2\langle\lambda,\rho\rangle\geq&\hspace{-2mm}2\langle\mu-\mu_1,\mu_1+\rho_\mathfrak{h}\rangle\\
	\Leftrightarrow&(\lVert\lambda\rVert^2-\lVert\mu-\mu_1\rVert^2)+ 2\langle\lambda,\rho\rangle\geq&\hspace{-2mm}2\langle w(\mu-\mu_1),\rho\rangle\\
	&&-2\sum\limits_{\alpha\in A}\langle w(\mu-\mu_1),\alpha\rangle\\
	\Leftrightarrow&(\lVert\lambda\rVert^2-\lVert\mu-\mu_1\rVert^2)+2\langle\lambda-w(\mu-\mu_1),\rho\rangle\geq&\hspace{-2mm} -2\sum\limits_{\alpha\in A}\langle w(\mu-\mu_1),\alpha\rangle,
	\end{array}
	\end{equation*}
	which is always true. More precisely, since $\mu-\mu_1$ is a weight of $V$ and $\lambda$ is extremal, we always have $\lVert\lambda\rVert^2\geq\lVert\mu-\mu_1\rVert^2$. In addition, $w(\mu-\mu_1)$ is a weight of $V$ and so $\lambda
	-w(\mu-\mu_1)$ can be written as a positive sum of positive roots. The element $\rho$ being dominant \cite[Proposition 2.67]{Knapp1}, the left-hand side of the last inequality is always nonnegative. On the other hand, $w(\mu-\mu_1)$ is also dominant and so the right-hand side is always nonpositive.
	
	The equality holds if and only if 
	
	\begin{equation*}
	(\lVert\lambda\rVert^2-\lVert\mu-\mu_1\rVert^2)+2\langle\lambda-w(\mu-\mu_1),\rho\rangle=0,
	\end{equation*}
	which means that $\lambda=w(\mu-\mu_1)$ and thus $\mu-\mu_1$ is extremal, and
	
	\begin{equation*}
	\langle w(\mu-\mu_1),\alpha\rangle=0 \text{ for every $\alpha$ in $A$,}
	\end{equation*}
	which is equivalent to $\langle\lambda,\alpha\rangle=0$ for every $\alpha$ in $A$.
\end{proof}

\subsection{Kernel of $t$-noncubic Dirac operators}
We keep the previous notation. Namely, $\mathfrak{g}$ is a complex semisimple Lie algebra, $\mathfrak{h}$ a Lie subalgebra of $\mathfrak{g}$ satisfying condition \eqref{conditionh} with $\mathfrak{t}$ being a common Cartan subalgebra of $\mathfrak{g}$ and $\mathfrak{h}$, while $\mathfrak{q}$ is the orthogonal complement of $\mathfrak{h}$ with respect to the Killing form $\langle\cdot,\cdot\rangle$ of $\mathfrak{g}$.

\begin{theorem}\label{kerneltcubic}
	Let $V$ be an irreducible finite-dimensional representation of $\mathfrak{g}$ of highest weight $\lambda\in\mathfrak{t}^*$. For every $t\in(0,2)$,
	\begin{equation*}
	\ker D^t_{\mathfrak{g},\mathfrak{h}}(V)=\ker D_{\mathfrak{g},\mathfrak{h}}(V).
	\end{equation*}
\end{theorem}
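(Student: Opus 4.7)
The plan is to combine Proposition \ref{kerinclusion}, which already supplies the inclusion $\ker D_{\mathfrak{g},\mathfrak{h}}(V) \subseteq \ker D^t_{\mathfrak{g},\mathfrak{h}}(V)$ for every $t \in [0,2]$, with a perturbative argument based on Weyl's inequalities to obtain the reverse inclusion for $t \in (0,2)$. First I would use the $\mathfrak{h}$-equivariance of both operators to decompose $V \otimes S = \bigoplus_\mu (V \otimes S)_\mu$ into $\mathfrak{h}$-isotypic components, each of which is stable under $D_{\mathfrak{g},\mathfrak{h}}(V)$ and under $D^t_{\mathfrak{g},\mathfrak{h}}(V)$. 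It then suffices to prove, isotypic component by isotypic component, that $\ker D^t_{\mathfrak{g},\mathfrak{h}}(V) \cap (V \otimes S)_\mu \subseteq \ker D_{\mathfrak{g},\mathfrak{h}}(V) \cap (V \otimes S)_\mu$.

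Next I would split into two cases according to the value of the scalar by which $D_{\mathfrak{g},\mathfrak{h}}(V)^2$ acts on $(V \otimes S)_\mu$, namely $\lVert\lambda+\rho\rVert^2 - \lVert\mu+\rho_\mathfrak{h}\rVert^2$ by formula \eqref{Dsquare}. When this scalar vanishes, self-adjointness of $D_{\mathfrak{g},\mathfrak{h}}(V)$ forces $D_{\mathfrak{g},\mathfrak{h}}(V) = 0$ on the component, so the whole $(V \otimes S)_\mu$ sits inside $\ker D_{\mathfrak{g},\mathfrak{h}}(V)$ and Proposition \ref{kerinclusion} settles this case. The interesting case is when $c := \sqrt{\lVert\lambda+\rho\rVert^2 - \lVert\mu+\rho_\mathfrak{h}\rVert^2} > 0$: the eigenvalues of $D_{\mathfrak{g},\mathfrak{h}}(V)$ on $(V \otimes S)_\mu$ are then $\pm c$, and I have to rule out $0$ as an eigenvalue of the perturbation $D^t_{\mathfrak{g},\mathfrak{h}}(V) = D_{\mathfrak{g},\mathfrak{h}}(V) + (1-t)(\sqrt{2}\otimes\gamma(c))$.

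Here is where Theorem \ref{inequal} plays the decisive role. For every weight $\mu_1$ of $S$ related to $\mu$, that theorem gives $c^2 \geq \lVert\rho\rVert^2 - \lVert\mu_1+\rho_\mathfrak{h}\rVert^2$. Since the eigenvalues of $\sqrt{2}\otimes\gamma(c)$ on $(V \otimes S)_\mu$ are exactly the quantities $\pm\sqrt{\lVert\rho\rVert^2 - \lVert\mu_1+\rho_\mathfrak{h}\rVert^2}$ as $\mu_1$ ranges over weights of $S$ related to $\mu$, taking the maximum over such $\mu_1$ yields $c \geq \lVert \sqrt{2}\otimes\gamma(c)\rVert^{(\mu)}$. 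Since $t \in (0,2)$ implies $|1-t| < 1$, I then obtain the strict inequality $c > |1-t| \cdot \lVert \sqrt{2}\otimes\gamma(c)\rVert^{(\mu)}$ as soon as the right-hand side is nonzero (and the conclusion is trivial otherwise, since then $D^t_{\mathfrak{g},\mathfrak{h}}(V)$ and $D_{\mathfrak{g},\mathfrak{h}}(V)$ agree on the component). Applying the Weyl's inequalities \eqref{blockineq} with $A = D_{\mathfrak{g},\mathfrak{h}}(V)$ and $B = (1-t)(\sqrt{2}\otimes\gamma(c))$, each eigenvalue of $D^t_{\mathfrak{g},\mathfrak{h}}(V)$ on $(V \otimes S)_\mu$ lies within $|1-t| \cdot \lVert \sqrt{2}\otimes\gamma(c)\rVert^{(\mu)}$ of $\pm c$ and therefore stays bounded away from $0$. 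Thus $\ker D^t_{\mathfrak{g},\mathfrak{h}}(V) \cap (V \otimes S)_\mu = 0$, and the proof is complete.

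The main obstacle is really the inequality supplied by Theorem \ref{inequal}, which secures the eigenvalue comparison $c \geq \lVert \sqrt{2}\otimes\gamma(c)\rVert^{(\mu)}$; without it the Weyl perturbation bound cannot close. This also explains transparently why the endpoints $t = 0$ and $t = 2$ must be excluded: at those values one has $|1-t| = 1$, and the cases of equality discussed in Theorem \ref{inequal} can allow $0$ to become an eigenvalue of $D^t_{\mathfrak{g},\mathfrak{h}}(V)$, which is precisely the mechanism by which extra vectors enter the kernel, as illustrated by Example \ref{firstexample}.
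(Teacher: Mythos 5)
Your proof is correct and follows essentially the same strategy as the paper's: both arguments combine Proposition \ref{kerinclusion} for one inclusion, then use the isotypic decomposition, the Weyl inequalities \eqref{blockineq}, the eigenvalue comparison from Theorem \ref{inequal}, and the fact that $\lvert 1-t\rvert<1$ for $t\in(0,2)$ to rule out extra zero eigenvalues of $D^t_{\mathfrak{g},\mathfrak{h}}(V)$. The only cosmetic difference is the direction of the argument: you case-split on $c=0$ versus $c>0$ up front and derive the strict bound $c>\lvert 1-t\rvert\,\lVert\sqrt{2}\otimes\gamma(c)\rVert^{(\mu)}$ to keep the perturbed spectrum away from zero, while the paper assumes $\lambda_i^{(\mu)}(D^t_{\mathfrak{g},\mathfrak{h}}(V))=0$ and forces equality throughout the chain of inequalities, concluding $c=0$; these are contrapositives of each other.
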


\begin{proof} The inclusion $\ker D_{\mathfrak{g},\mathfrak{h}}(V)\subseteq\ker D^t_{\mathfrak{g},\mathfrak{h}}(V)$ for every $t\in(0,2)$ is ensured by Proposition \ref{kerinclusion}.
	
	For the other inclusion, we suppose that $(V\otimes S)_\mu\cap\ker D_{\mathfrak{g},\mathfrak{h}}(V)\neq 0$, i.e. there is some index $i$ such that $\lambda_i^{(\mu)}(D^t_{\mathfrak{g},\mathfrak{h}}(V))=0$. Suppose that the corresponding eigenvalue $\lambda_i^{(\mu)}(D_{\mathfrak{g},\mathfrak{h}}(V))$ of the cubic Dirac operator $D_{\mathfrak{g},\mathfrak{h}}(V)$ is nonnegative, i.e.
	
	\begin{equation*}
	\lambda_i^{(\mu)}(D_{\mathfrak{g},\mathfrak{h}}(V))=\sqrt{\lVert\lambda+\rho\rVert^2-\lVert\mu+\rho_\mathfrak{h}\rVert^2}.
	\end{equation*}
	On the other hand, recall that 
	\begin{equation*}
	\lVert\sqrt{2}\otimes\gamma(c)\rVert^{(\mu)}=\sqrt{\lVert\rho\rVert^2-\lVert\mu_1+\rho_\mathfrak{h}\rVert^2},
	\end{equation*}
	where $\mu_1\in\mathfrak{t}^*$ is a weight of $S$ related to $\mu$ (see Definition \ref{related}). 
	Replacing in \eqref{otherineq} and using \eqref{ineq}, one obtains
	\begin{align*}
	0=\lambda_i^{(\mu)}(D^t_{\mathfrak{g},\mathfrak{h}}(V))&\geq\sqrt{\lVert\lambda+\rho\rVert^2-\lVert\mu+\rho_\mathfrak{h}\rVert^2}-\lvert1-t\rvert\sqrt{\lVert\rho\rVert^2-\lVert\mu_1+\rho_\mathfrak{h}\rVert^2}\\
	&\geq\sqrt{\lVert\lambda+\rho\rVert^2-\lVert\mu+\rho_\mathfrak{h}\rVert^2}-\sqrt{\lVert\rho\rVert^2-\lVert\mu_1+\rho_\mathfrak{h}\rVert^2}\\
	&\geq0.
	\end{align*}
	
	Hence the equalities in the above inequalities must hold and one obtains
	\begin{subequations}\label{iinn}
		\begin{equation}
		\sqrt{\lVert\lambda+\rho\rVert^2-\lVert\mu+\rho_\mathfrak{h}\rVert^2}-\lvert1-t\rvert\sqrt{\lVert\rho\rVert^2-\lVert\mu_1+\rho_\mathfrak{h}\rVert^2}=0,
		\end{equation}
		\begin{equation}
		\sqrt{\lVert\lambda+\rho\rVert^2-\lVert\mu+\rho_\mathfrak{h}\rVert^2}-\sqrt{\lVert\rho\rVert^2-\lVert\mu_1+\rho_\mathfrak{h}\rVert^2}=0.
		\end{equation}
	\end{subequations}
	Since $\lvert1-t\rvert\neq1$, subtracting the above equalities gives 
	\begin{equation}\label{innn}
	\sqrt{\lVert\lambda+\rho\rVert^2-\lVert\mu+\rho_\mathfrak{h}\rVert^2}=\sqrt{\lVert\rho\rVert^2-\lVert\mu_1+\rho_\mathfrak{h}\rVert^2}=0
	\end{equation}
	and so $\lambda_i^{(\mu)}(D_{\mathfrak{g},\mathfrak{h}}(V))=0$.
	
	A similar argument applies in the case when $\lambda_i^{(\mu)}(D_{\mathfrak{g},\mathfrak{h}}(V))$ is nonpositive.
	In other words, the eigenvalues of $D_{\mathfrak{g},\mathfrak{h}}(V)$ in $(V\otimes S)_\mu$ all are zero and so $(V\otimes S)_\mu\subseteq\ker D_{\mathfrak{g},\mathfrak{h}}(V)$. 
	Consequently, we deduce that  if $(V\otimes S)_\mu\cap\ker D^t_{\mathfrak{g},\mathfrak{h}}(V)\neq 0$, then $(V\otimes S)_\mu\subseteq \ker D_{\mathfrak{g},\mathfrak{h}}(V)$ and so $\ker D^t_{\mathfrak{g},\mathfrak{h}}(V)\subseteq \ker D_{\mathfrak{g},\mathfrak{h}}(V)$.	
\end{proof}

\begin{remark}\label{remak}
	Note that in the case when $t=0$ or $t=2$ the equalities \eqref{iinn} coincide so that the equality \eqref{innn} can not be deduced. Nevertheless, one has that the equality in \eqref{ineq} must hold.
\end{remark}

\subsection{Kernel of the noncubic Dirac operator}
We address the case of the noncubic Dirac operator $\widehat{D}_{\mathfrak{g},\mathfrak{h}}(V)$. The same arguments can apply exactly in the same way for the operator $D^2_{\mathfrak{g},\mathfrak{h}}(V)$.

\begin{definition} We say that the equality in \eqref{ineq} holds for a weight $\mu\in\mathfrak{t}^*$ of $\hspace{1mm}V\otimes S$ (or equivalently for the corresponding isotypic component $(V\otimes S)_\mu$), if there is $\mu_1\in\mathfrak{t}^*$ related to $\mu$ such that the equality in \eqref{ineq} holds. In this case, we will say that $\mu_1$ is strongly related to $\mu$.
\end{definition}

The following theorem establishes a necessary condition for the isotypic component $(V\otimes S)_\mu$ to contribute to $\ker\widehat{D}_{\mathfrak{g},\mathfrak{h}}(V)$.

\begin{theorem}\label{cont} Let $\mu\in\mathfrak{t}^*$ be a weight of $\text{ }V\otimes S$. If the equality in \eqref{ineq}  does not hold for $\mu$, then $(V\otimes S)_\mu$ does not contribute to $\ker\widehat{D}_{\mathfrak{g},\mathfrak{h}}(V)$. In other words, the isotypic component $(V\otimes S)_\mu$ has nontrivial intersection with $\ker\widehat{D}_{\mathfrak{g},\mathfrak{h}}(V)$ only if the equality in $\eqref{ineq}$ holds for $(V\otimes S)_\mu$.
\end{theorem}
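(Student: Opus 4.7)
The plan is to argue by contrapositive: assuming $(V\otimes S)_\mu \cap \ker\widehat{D}_{\mathfrak{g},\mathfrak{h}}(V) \neq 0$, I would show that equality in \eqref{ineq} holds for some $\mu_1$ related to $\mu$. The argument mirrors that of Theorem \ref{kerneltcubic} specialised to $t=0$, with the crucial difference that $|1-t|=1$ blocks the subtraction step that produced \eqref{innn}; as a consequence we only obtain a necessary condition rather than the stronger conclusion $(V\otimes S)_\mu\subseteq\ker D_{\mathfrak{g},\mathfrak{h}}(V)$ seen there.

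Concretely, suppose that some eigenvalue $\lambda_i^{(\mu)}(\widehat{D}_{\mathfrak{g},\mathfrak{h}}(V))$ vanishes. Writing $\widehat{D}_{\mathfrak{g},\mathfrak{h}}(V) = D_{\mathfrak{g},\mathfrak{h}}(V) + \sqrt{2}\otimes\gamma(c)$ and applying both directions of Weyl's inequality \eqref{blockineq} with $t=0$, I would deduce
\[
\bigl|\lambda_i^{(\mu)}(D_{\mathfrak{g},\mathfrak{h}}(V))\bigr| \leq \lVert\sqrt{2}\otimes\gamma(c)\rVert^{(\mu)}.
\]
The left-hand side equals $\sqrt{\lVert\lambda+\rho\rVert^2 - \lVert\mu+\rho_\mathfrak{h}\rVert^2}$ because every eigenvalue of $D_{\mathfrak{g},\mathfrak{h}}(V)$ on the isotypic component has this absolute value, while the right-hand side, as recalled in the \emph{Related weights} subsection, equals $\sqrt{\lVert\rho\rVert^2 - \lVert\mu_1^\star+\rho_\mathfrak{h}\rVert^2}$ for the weight $\mu_1^\star$ of $S$ related to $\mu$ that minimises $\lVert\mu_1+\rho_\mathfrak{h}\rVert$ over all related weights (equivalently, that realises the operator norm). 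Pairing this upper bound with the lower bound from Theorem \ref{inequal} applied at the same $\mu_1^\star$ squeezes the two quantities together, forcing the equality in \eqref{ineq} for $\mu_1^\star$, as required.

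The main subtlety I anticipate is the choice of the related weight: Weyl's inequality produces a useful bound only when $\mu_1^\star$ is the one realising the operator norm $\lVert\sqrt{2}\otimes\gamma(c)\rVert^{(\mu)}$, and Theorem \ref{inequal} produces the matching reverse bound only when evaluated at that \emph{same} $\mu_1^\star$. Once these two bounds are anchored at a common related weight the conclusion is immediate, but one has to resist the temptation to apply them at different $\mu_1$'s. The loss of the strict gap $|1-t|<1$ is precisely what prevents us from iterating (as in the proof of Theorem \ref{kerneltcubic}) and concluding the stronger isotypic inclusion — a phenomenon that is consistent with Remark \ref{remak} and illustrated by Example \ref{nonpolyn}.
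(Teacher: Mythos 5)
Your proposal is correct and is essentially the paper's own argument: the paper proves Theorem \ref{cont} by invoking Remark \ref{remak}, which encodes precisely your squeeze between the Weyl inequality \eqref{blockineq} at $|1-t|=1$ and the lower bound of Theorem \ref{inequal}, applied at the related weight realising the operator norm of $\sqrt{2}\otimes\gamma(c)$ on $(V\otimes S)_\mu$. Your remarks on anchoring both bounds at the same $\mu_1$ and on why the $t=0$ case only yields a necessary condition match the paper's reasoning exactly.
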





\begin{proof}
	If $\mu\in\mathfrak{t}^*$ is a weight of $V\otimes S$ such that $(V\otimes S)_\mu$ contributes to $\ker \widehat{D}_{\mathfrak{g},\mathfrak{h}}(V)$, i.e.  for some index $i$ there is a zero eigenvalue $\lambda_i^{(\mu)}(\widehat{D}_{\mathfrak{g},\mathfrak{h}}(V))$, then, according to Remark \ref{remak}, the equality in \eqref{ineq} must hold.
\end{proof}

Consequently, in order to study $\ker\widehat{D}_{\mathfrak{g},\mathfrak{h}}(V)$ we can focus on the isotypic components for which the equality \eqref{ineq} holds. 


In the rest of this section, we suppose that $\mathfrak{h}$ is a Cartan subalgebra $\mathfrak{t}$ of $\mathfrak{g}$. Under this asumption, we give a sufficient condition for a $\mathfrak{t}$-isotypic component $(V\otimes S)_\mu$ of $V\otimes S$ to contribute to $\ker\widehat{D}_{\mathfrak{g},\mathfrak{t}}(V)$. Combining this condition with Theorem \ref{cont} we obtain a complete description for $\ker\widehat{D}_{\mathfrak{g},\mathfrak{t}}(V)$ in Theorem \ref{thmbasic}.

If $\mu_1\in \mathfrak{t}^*$ is strongly related to $\mu$, then it will also be the case for $w\mu$ and $w\mu_1$ for every $w$ in $W$. Note that $w\mu$ and $w\mu_1$ turn out to be weights of $V\otimes S$ and $S$ respectively. More precisely, $w\mu_1$ is a weight of $S$ since it is of the form \eqref{Sweight} with $\rho_\mathfrak{h}=0$ and with $w\Delta^+$ being the positive system of $\Delta$. Then $w\mu=w(\mu-\mu_1)+w\mu_1$ is a weight of $V\otimes S$ since $\mu-\mu_1$, and thus $w(\mu-\mu_1)$, is a weight of $V$.

Set 
\begin{equation*}
(V\otimes S)^{[\mu]}:=\bigoplus\limits_{w\in W/W_\mu}(V\otimes S)_{w\mu},
\end{equation*}
where $W_\mu$ stands for the stabilizer of $\mu$ in $W$. In other words, $(V\otimes S)^{[\mu]}$ is the direct sum of the isotypic components corresponding to the $W$-orbit of $\mu$.
%
%
From the previous discussion, if the equality in \eqref{ineq} holds for $\mu$, then it will also hold for any isotypic component appearing in $(V\otimes S)^{[\mu]}$ and so, using the action of $W/W_\mu$, we can suppose that $\mu$ is such that if $\mu_1$ is strongly related to $\mu$, then $\mu-\mu_1$ is dominant. In this case, according to Theorem \ref{inequal}, $\mu-\mu_1$ is an extremal weight of $V$ and, since it is dominant, it must be equal to $\lambda$. Then $\mu_1$ can be written as \begin{equation*}\mu_1=\rho-\sum\limits_{\alpha\in A}\alpha
\end{equation*}
for some $A\subseteq\big(\mathbb{R}\{\lambda\}\big)^\perp\cap\Delta^+$ and so
\begin{equation*}
\mu=\lambda+\rho-\sum\limits_{\alpha\in A}\alpha.
\end{equation*}
Consequently, if
\begin{equation*}
\widetilde{A}(\lambda):=\{\lambda+\rho-\sum\limits_{\alpha\in A}\alpha\mid A\subseteq \big(\mathbb{R}\{\lambda\}\big)^\perp\cap\Delta^+\}
\end{equation*} 
and 
\begin{equation}\label{alphalambda}
A(\lambda):=\widetilde{A}(\lambda)/W,\end{equation}
i.e. $A(\lambda)$ is the set of equivalence classes of $\widetilde{A}(\lambda)$ with respect to $W$,
the equality in \eqref{ineq} holds exactly for the isotypic components appearing in 

\begin{equation*}
\bigoplus\limits_{\mu\in A(\lambda)}(V\otimes S)^{[\mu]}.
\end{equation*}

Note that for distinct elements $\mu$ and $\mu'$ in $\widetilde{A}(\lambda)$ belonging to the same $W$-orbit, we have $(V\otimes S)^{[\mu]}=(V\otimes S)^{[\mu']}$. In order to avoid having twice the same block in the above direct sum, we have to consider the quotient $A(\lambda)$ instead of $\widetilde{A}(\lambda)$.

\begin{lemma}\label{onevector} If the equality in \eqref{ineq} holds for $\mu$, then there is a weight $\mu_0$ of $V$ and a weight $\mu_1$ of $S$ (related to $\mu_0$) such that $(V\otimes S)_\mu=V_{\mu_0}\otimes S_{\mu_1}$ where $V_{\mu_0}$ (respectively $S_{\mu_1}$) is the corresponding weight subspace of $V$ (respectively $S$). In other words, there is only one weight $\mu_1$ of $S$ related to $\mu$.\end{lemma}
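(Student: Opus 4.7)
The plan is to combine the explicit description of $\mu_1$ coming from Theorem~\ref{inequal} with the standard bound $\lVert\mu_0'\rVert^2\leq\lVert\lambda\rVert^2$ valid for every weight $\mu_0'$ of the irreducible module $V$ of highest weight $\lambda$. As pointed out in the paragraph preceding the lemma, after translating $\mu$ by a suitable element of $W$ (which leaves both the hypothesis and the conclusion unchanged, each being $W$-equivariant), one may assume that $\mu-\mu_1$ is $\Delta^+$-dominant, in which case Theorem~\ref{inequal} forces $\mu-\mu_1=\lambda$ and, using $\rho_\mathfrak{h}=0$ since $\mathfrak{h}=\mathfrak{t}$, writes
\begin{equation*}
\mu_1=\rho-\sum_{\alpha\in A}\alpha,\qquad A\subseteq(\mathbb{R}\{\lambda\})^\perp\cap\Delta^+.
\end{equation*}

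Next I would take an \emph{arbitrary} weight $\mu_1'$ of $S$ related to $\mu$ and show it must coincide with $\mu_1$. Writing $\mu_1'=\rho-\sum_{\beta\in B'}\beta$ for some $B'\subseteq\Delta^+$ via formula~\eqref{Sweight}, the weight $\mu_0':=\mu-\mu_1'$ of $V$ satisfies
\begin{equation*}
\lVert\mu_0'\rVert^2-\lVert\lambda\rVert^2=2\langle\lambda,\mu_1-\mu_1'\rangle+\lVert\mu_1-\mu_1'\rVert^2.
\end{equation*}
Because $\langle\lambda,\alpha\rangle=0$ for every $\alpha\in A$, the first term on the right simplifies to $2\sum_{\beta\in B'}\langle\lambda,\beta\rangle$, a nonnegative quantity as $\lambda$ is $\Delta^+$-dominant and $B'\subseteq\Delta^+$; the second term is manifestly nonnegative. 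The input bound $\lVert\mu_0'\rVert^2\leq\lVert\lambda\rVert^2$ then forces both terms to vanish, so that in particular $\mu_1'=\mu_1$.

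Uniqueness of the related weight in hand, setting $\mu_0:=\mu-\mu_1=\lambda$ collapses the weight-space decomposition
\begin{equation*}
(V\otimes S)_\mu=\bigoplus_{\nu}V_{\nu}\otimes S_{\mu-\nu}
\end{equation*}
(sum over the weights $\nu$ of $V$ such that $\mu-\nu$ is a weight of $S$) to the single summand $V_{\mu_0}\otimes S_{\mu_1}$. The most delicate ingredient is the norm inequality $\lVert\mu_0'\rVert^2\leq\lVert\lambda\rVert^2$: it is standard but must be invoked via the dominant $W$-representative $(\mu_0')^+\in W\mu_0'$, which remains a weight of $V$, so that $\lambda-(\mu_0')^+$ lies in the positive root cone and $\lVert\lambda\rVert^2-\lVert(\mu_0')^+\rVert^2=\langle\lambda+(\mu_0')^+,\lambda-(\mu_0')^+\rangle\geq 0$ by dominance of $\lambda+(\mu_0')^+$. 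It is the orthogonality $\langle\lambda,A\rangle=0$ supplied by Theorem~\ref{inequal} that makes the expansion of $\lVert\mu_0'\rVert^2$ tight enough for this squeeze argument to eliminate any perturbation $\mu_1'$ of $\mu_1$.
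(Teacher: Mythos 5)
Your proof is correct and takes essentially the same route as the paper's: both rest on writing the spin weights in the form $\rho-\sum_{\alpha\in A}\alpha$, the orthogonality $\langle\lambda,\alpha\rangle=0$ for $\alpha\in A$ supplied by Theorem \ref{inequal}, dominance of $\lambda$, and norm-maximality of the extremal weight, assembled into a positivity squeeze after the same $W$-reduction to $\mu-\mu_1=\lambda$. The only difference is cosmetic: you expand $\lVert\mu_0'\rVert^2-\lVert\lambda\rVert^2$ and let the bound $\lVert\mu_0'\rVert\leq\lVert\lambda\rVert$ annihilate both nonnegative terms at once (giving $\mu_1'=\mu_1$ directly), whereas the paper first forces the cross term $\langle\lambda-\lambda',\lambda\rangle$ to vanish by a two-sided sign argument and then invokes extremality.
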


\begin{proof}
	Assume that $(V\otimes S)_\mu$ contains a nontrivial vector of the form $v_\lambda\otimes e$, with $v_\lambda$ being a highest weight vector in $V$ and $e$ being a weight vector of $S$. Suppose that there is a nontrivial weight vector $v_{\lambda'}$ of weight $\lambda'$ in $V$ and a nontrivial weight vector $e'$ in $S$ such that $v_{\lambda'}\otimes e'$ also belongs to $(V\otimes S)_\mu$. Let $A$ and $A'$ be subspaces of $\Delta^+$ such that the weights of $S$ corresponding to $e$ and $e'$ are respectively
	\begin{equation*}
	\rho-\sum\limits_{\alpha\in A}\alpha\mathrm{ \quad and \quad} \rho-\sum\limits_{\alpha'\in A'}\alpha'.
	\end{equation*}
	We observe that, according to Theorem \ref{inequal}, $\langle\alpha,\lambda\rangle=0$ for every $\alpha$ in $A$. Therefore
	\begin{equation*}
	\begin{array}{crl}
	&\lambda+\rho-\sum\limits_{\alpha\in A}\alpha&=\lambda'+\rho-\sum\limits_{\alpha'\in A'}\alpha'\\
	\Rightarrow&\lambda-\lambda'&=\sum\limits_{\alpha\in A}\alpha-\sum\limits_{\alpha'\in A'}\alpha'\\
	\Rightarrow&\langle\lambda-\lambda',\lambda\rangle&=-\sum\limits_{\alpha'\in A'}\langle\alpha',\lambda\rangle.\\
	\end{array}
	\end{equation*}
	The left side of the above equality is nonnegative while the right one is nonpositive and thus both must be equal to $0$. Therefore $\langle\lambda-\lambda',\lambda\rangle=0$ and 
	\begin{align*}
	\lVert\lambda'\rVert^2&=\lVert(\lambda'-\lambda)+\lambda\rVert^2\\
	&=\Vert\lambda'-\lambda\rVert^2+\lVert\lambda\rVert
	^2+2\langle\lambda'-\lambda,\lambda\rangle\\
	&=\Vert\lambda'-\lambda\rVert^2+\lVert\lambda\rVert
	^2\\
	&\geq \lVert\lambda\rVert^2.
	\end{align*}
	Since $\lambda$ is an extremal weight of $V$, the equality must hold. Therefore $\Vert\lambda'-\lambda\rVert=0$ or equivalently $\lambda'=\lambda$.
\end{proof}

\begin{theorem}\label{thmbasic} Let $\mathfrak{g}$ be a complex semisimple Lie algebra, $\mathfrak{h}:=\mathfrak{t}$ be a Cartan subalgebra of $\mathfrak{g}$, $\Delta$ be the corresponding root system and fix $\Delta^+$ a positive system of $\Delta$. For a $\Delta^+$-dominant algebraically integral element $\lambda\in\mathfrak{t}^*$, let $V$ be the corresponding finite-dimensional irreducible representation of highest weight $\lambda$ and $A(\lambda)$ be the set defined in \eqref{alphalambda}. Then the kernel of the noncubic Dirac operator $\widehat{D}_{\mathfrak{g},\mathfrak{t}}(V)$ acting on $V\otimes S$ is
	
	\begin{equation}\label{descriptionker}
	\ker\widehat{D}_{\mathfrak{g},\mathfrak{t}}(V)=\bigoplus\limits_{\mu\in A(\lambda)}(V\otimes S)^{[\mu]}.
	\end{equation}
\end{theorem}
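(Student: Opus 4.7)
The $\subseteq$ inclusion is essentially in hand: Theorem \ref{cont} asserts that only weight spaces $(V \otimes S)_\mu$ for which equality in \eqref{ineq} holds can contribute to $\ker \widehat{D}_{\mathfrak{g},\mathfrak{t}}(V)$, and the discussion immediately preceding the theorem identifies these $\mu$ as precisely the elements of $\bigcup_{\mu' \in A(\lambda)} W \cdot \mu'$. The crux of the proof will thus be the reverse inclusion $(V \otimes S)^{[\mu]} \subseteq \ker \widehat{D}_{\mathfrak{g},\mathfrak{t}}(V)$ for every $\mu \in A(\lambda)$.

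My plan is to first treat a canonical representative $\mu = \lambda + \rho - \sum_{\alpha \in A}\alpha$ with $A \subseteq (\mathbb{R}\{\lambda\})^\perp \cap \Delta^+$. Setting $\mu_1 := \mu - \lambda$, Lemma \ref{onevector} identifies the weight space $(V \otimes S)_\mu$ with $V_\lambda \otimes S_{\mu_1}$, which admits a basis $\{v_\lambda \otimes e_B\}$ where $B$ ranges over subsets of $\Delta^+$ satisfying $\sum_{\beta \in B}\beta = \sum_{\alpha \in A}\alpha$. Splitting $\widehat{D}_{\mathfrak{g},\mathfrak{t}}(V) = \widehat{D}_1 + \widehat{D}_2$ as in the proof of Proposition \ref{kerinclusion}, the first piece vanishes on $v_\lambda \otimes e_B$ since $e_\alpha v_\lambda = 0$ for every $\alpha \in \Delta^+$, while the contraction formula gives
\[\widehat{D}_2(v_\lambda \otimes e_B) = \sqrt{2}\sum_{\beta \in B}(\pm 1)\, e_{-\beta}v_\lambda \otimes e_{B \setminus \{\beta\}}.\]

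The heart of the argument, and the step I expect to be the main obstacle, is showing that each summand above vanishes individually. The key observation is that if $e_{-\beta}v_\lambda \neq 0$, then $e_{-\beta}v_\lambda \otimes e_{B\setminus\{\beta\}}$ lies in $V_{\lambda - \beta} \otimes S_{\mu_1 + \beta}$, a tensor of weight components distinct from $V_\lambda \otimes S_{\mu_1}$ whenever $\beta \neq 0$. Since Lemma \ref{onevector} asserts $V_\lambda \otimes S_{\mu_1}$ is the \emph{only} nonzero such component summing to $\mu$, the cross product $V_{\lambda - \beta} \otimes S_{\mu_1 + \beta}$ must vanish; but $e_{B\setminus\{\beta\}}$ is a nonzero vector of $S_{\mu_1 + \beta}$, so we are forced to conclude $V_{\lambda - \beta} = 0$, i.e.\ $e_{-\beta}v_\lambda = 0$. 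This yields $\widehat{D}_2(v_\lambda \otimes e_B) = 0$, hence $(V \otimes S)_\mu \subseteq \ker \widehat{D}_{\mathfrak{g},\mathfrak{t}}(V)$. To extend this to the full $W$-orbit, I would invoke the independence of $\widehat{D}_{\mathfrak{g},\mathfrak{t}}(V)$ on the choice of positive system: for $w \in W$, rewriting it with $w\Delta^+$ turns $v_{w\lambda}$ into a highest weight vector of $V$ and translates $A$ to $w(A) \subseteq (\mathbb{R}\{w\lambda\})^\perp \cap w\Delta^+$, so the preceding argument applies verbatim with $v_{w\lambda}$ in place of $v_\lambda$, yielding $(V \otimes S)_{w\mu} \subseteq \ker \widehat{D}_{\mathfrak{g},\mathfrak{t}}(V)$ and completing the proof.
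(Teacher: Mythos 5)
Your proposal is correct, and its skeleton matches the paper's: one inclusion via Theorem \ref{cont} together with the discussion identifying the equality case with the orbits of $A(\lambda)$, the other via Lemma \ref{onevector}. Where you differ is in how the vanishing on $(V\otimes S)_\mu$ is extracted. The paper does not normalize $\mu$ at all: it observes that $\widehat{D}_{\mathfrak{g},\mathfrak{t}}(V)$ preserves the $\mathfrak{t}$-weight space $(V\otimes S)_\mu$, which by Lemma \ref{onevector} sits inside a single $V_{\mu_0}\otimes S$, while every additive term $\pi(e_\alpha)\otimes\gamma(e_{-\alpha})$ sends $V_{\mu_0}\otimes S$ into $V_{\mu_0+\alpha}\otimes S$ with $\alpha\neq 0$; since $V_{\mu_0}\otimes S$ meets $\bigoplus_{\alpha\in\Delta}V_{\mu_0+\alpha}\otimes S$ trivially, the image is zero in one stroke. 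Your route instead fixes the dominant representative $\mu=\lambda+\rho-\sum_{\alpha\in A}\alpha$, kills $\widehat{D}_1$ by the highest-weight property, expands $\widehat{D}_2$ by the contraction formula, and kills each term $e_{-\beta}v_\lambda\otimes e_{B\setminus\{\beta\}}$ by playing the nonvanishing of $e_{B\setminus\{\beta\}}\in S_{\mu_1+\beta}$ against the uniqueness clause of Lemma \ref{onevector}; you then need the change-of-positive-system argument (as in the proof of Proposition \ref{kerinclusion}) to cover the full $W$-orbit. Both arguments are sound; the paper's is shorter because the weight-space intersection trick works for an arbitrary $\mu$ in the orbit and needs neither the explicit Clifford contraction nor the $W$-conjugation step, whereas your term-by-term version has the side benefit of directly yielding the stronger statement that each summand $\pi(e_\alpha)\otimes\gamma(e_{-\alpha})$ annihilates the kernel, i.e.\ Proposition \ref{intersectionD}, which the paper derives afterwards by repeating its argument.
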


\begin{proof} In view of Theorem \ref{cont}, it suffices to prove that each isotypic component of the sum on the right-hand side of the above relation lies in $\ker\widehat{D}_{\mathfrak{g},\mathfrak{t}}(V)$.
	Let $(V\otimes S)_\mu$ be such a component, i.e. $\mu$ is a weight of $V\otimes S$ such that \eqref{ineq} holds as an equality. According to Lemma \ref{onevector}, there is a weight $\mu_0$ of $V$ such that $(V\otimes S)_\mu\subseteq V_{\mu_0}\otimes S$, where $V_{\mu_0}$ is the weight subspace of $V$ corresponding to $\mu_0$. Then 
	\begin{equation}\label{last1}
	\widehat{D}_{\mathfrak{g},\mathfrak{t}}(V)[(V\otimes S)_\mu]\subseteq (V\otimes S)_\mu\subseteq V_{\mu_0}\otimes S.
	\end{equation}
	On the other hand, each additive term of $\widehat{D}_{\mathfrak{g},\mathfrak{t}}(V)$ acts on an element of $V_{\mu_0}\otimes S$ giving an element in $V_{\mu_0+\alpha}\otimes S$ for some $\alpha$ in $\Delta$, where $V_{\mu_0+\alpha}$ is the weight subspace of $V$ corresponding to $\mu_0+\alpha$ (which may be trivial if $\mu_0+\alpha$ is not a weight of $V$). Thus
	\begin{equation*}
	\widehat{D}_{\mathfrak{g},\mathfrak{t}}(V)[(V\otimes S)_\mu]\subseteq \widehat{D}_{\mathfrak{g},\mathfrak{t}}(V)[V_{\mu_0}\otimes S]\subseteq\bigoplus\limits_{\alpha\in\Delta}V_{\mu_0+\alpha}\otimes S
	\end{equation*}
	so that
	\begin{equation*}
	\widehat{D}_{\mathfrak{g},\mathfrak{t}}(V)[(V\otimes S)_\mu]\subseteq\Big( \bigoplus\limits_{\alpha\in\Delta}V_{\mu_0+\alpha}\otimes S\Big)\cap(V_{\mu_0}\otimes S)
	\end{equation*}
	Since $V_{\mu_0}$ is linearly independent with $\bigoplus\limits_{\alpha\in\Delta}V_{\mu_0+\alpha}$ we deduce that 
	\begin{equation*}
	\widehat{D}_{\mathfrak{g},\mathfrak{t}}(V)[(V\otimes S)_\mu]=\{0\}
	\end{equation*}
	and so $(V\otimes S)_\mu$ lies in $\ker\widehat{D}_{\mathfrak{g},\mathfrak{h}}(V)$.
\end{proof}
	

The following proposition is a consequence of Theorem \ref{thmbasic}.

\begin{proposition}\label{intersectionD}
	For $\{e_\alpha \}_{\alpha\in \Delta}$ as above,
	\begin{equation*}
	\ker\widehat{D}_{\mathfrak{g},\mathfrak{h}}(V)=\bigcap\limits_{\alpha\in\Delta}\ker\big(\pi(e_\alpha)\otimes \gamma(e_{-\alpha})\big).
	\end{equation*}
	In other words, if an element $x$ of $V\otimes S$ is annihilated by $\widehat{D}_{\mathfrak{g},\mathfrak{h}}(V)$, then it is annihilated by every additive term $\pi(e_\alpha)\otimes \gamma(e_{-\alpha})$, $\alpha\in\Delta$, of $\widehat{D}_{\mathfrak{g},\mathfrak{h}}(V)$.
\end{proposition}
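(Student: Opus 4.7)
The plan is to establish the two inclusions separately. The inclusion $\supseteq$ is immediate: since
$$\widehat{D}_{\mathfrak{g},\mathfrak{h}}(V)=\sqrt{2}\sum_{\alpha\in\Delta}\pi(e_\alpha)\otimes\gamma(e_{-\alpha}),$$
any $x$ that is annihilated by each summand is annihilated by the whole sum, so $x\in\ker\widehat{D}_{\mathfrak{g},\mathfrak{h}}(V)$.

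For the reverse inclusion $\subseteq$, my approach is to invoke the description of $\ker\widehat{D}_{\mathfrak{g},\mathfrak{h}}(V)$ supplied by Theorem \ref{thmbasic}, which decomposes the kernel as a direct sum of $\mathfrak{t}$-isotypic components $(V\otimes S)_\mu$ indexed by $\mu$ in the $W$-orbit of the elements of $A(\lambda)$. Each individual summand $\pi(e_\alpha)\otimes\gamma(e_{-\alpha})$ is $\mathfrak{t}$-weight preserving, since the shift $+\alpha$ on the $V$-factor exactly cancels the shift $-\alpha$ on the $S$-factor. Consequently the operator respects the $\mathfrak{t}$-weight decomposition, and it suffices to verify that every such summand annihilates each isotypic component $(V\otimes S)_\mu$ appearing in $\ker\widehat{D}_{\mathfrak{g},\mathfrak{h}}(V)$.

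The key technical input is Lemma \ref{onevector}: for such a $\mu$ there is a (necessarily unique) pair $(\mu_0,\mu_1)$ of weights of $V$ and $S$ respectively, with $\mu_0+\mu_1=\mu$, satisfying $(V\otimes S)_\mu=V_{\mu_0}\otimes S_{\mu_1}$. Applying $\pi(e_\alpha)\otimes\gamma(e_{-\alpha})$ to $V_{\mu_0}\otimes S_{\mu_1}$ produces an element of $V_{\mu_0+\alpha}\otimes S_{\mu_1-\alpha}$, whose total weight is again $\mu$. If this image were nonzero then both $V_{\mu_0+\alpha}$ and $S_{\mu_1-\alpha}$ would have to be nonzero, giving a second decomposition $\mu=(\mu_0+\alpha)+(\mu_1-\alpha)$ as a weight of $V$ plus a weight of $S$. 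The uniqueness in Lemma \ref{onevector} then forces $\alpha=0$, which contradicts $\alpha\in\Delta$. Hence each summand annihilates $(V\otimes S)_\mu$, completing the reverse inclusion.

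The main (rather minor) obstacle is extracting the uniqueness from Lemma \ref{onevector} explicitly: the equality $(V\otimes S)_\mu=V_{\mu_0}\otimes S_{\mu_1}$ implicitly forces $V_\nu\otimes S_\sigma=0$ for every \emph{other} decomposition $\nu+\sigma=\mu$ with $V_\nu$ and $S_\sigma$ both nonzero, and this is precisely what rules out a nontrivial image for $\alpha\neq 0$. Once this observation is made, everything else reduces to pure weight-bookkeeping.
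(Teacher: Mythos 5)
Your proof is correct and follows essentially the same route as the paper: the paper also reduces to the weight components $(V\otimes S)_\mu$ meeting the kernel (for which the equality in \eqref{ineq} holds), invokes Lemma \ref{onevector}, and kills each term $\pi(e_\alpha)\otimes\gamma(e_{-\alpha})$ by the same weight-shift bookkeeping, phrased there as $\pi(e_\alpha)\otimes\gamma(e_{-\alpha})[(V\otimes S)_\mu]\subseteq (V_{\mu_0+\alpha}\otimes S)\cap(V_{\mu_0}\otimes S)=\{0\}$ rather than via the uniqueness of the related weight, which is an equivalent use of the same lemma.
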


\begin{proof}
	One inclusion is obvious. For the other one, if $x\in(V\otimes S)_\mu$ belongs to $\ker \widehat{D}_{\mathfrak{g},\mathfrak{t}}(V)$, then the equality in $\eqref{ineq}$ holds for $(V\otimes S)_\mu$. Repeating the argument used in the proof of Theorem \ref{thmbasic}, for $\pi(e_\alpha)\otimes \gamma(e_{-\alpha})$, $\alpha\in\Delta$, instead of $\widehat{D}_{\mathfrak{g},\mathfrak{t}}(V)$, one obtains that
	\begin{equation*}
	\pi(e_\alpha)\otimes \gamma(e_{-\alpha})[(V\otimes S)_\mu]\subseteq( 
	V_{\mu_0+\alpha}\otimes S)\cap(V_{\mu_0}\otimes S)=\{0\}
	\end{equation*}
	and so 
	$\ker \widehat{D}_{\mathfrak{g},\mathfrak{t}}(V)\subseteq \ker \big(\pi(e_\alpha)\otimes \gamma(e_{-\alpha})\big)$. Since $\alpha\in\Delta$ was arbitrary, we obtain the desired result.
\end{proof}

\subsection{Explicit description of the set $\boldsymbol{A(\lambda)}$}\label{thesetA}
In order to make more explicit our description \eqref{descriptionker} of the kernel of the noncubic Dirac operator $\widehat{D}_{\mathfrak{g},\mathfrak{h}}(V)$, we study the set $A(\lambda)$ involved in the formula.
We continue here with the assumption that $\mathfrak{h}$ coincides with a Cartan subalgebra $\mathfrak{t}$ of $\mathfrak{g}$. The main result of this paragraph is the following proposition.

\begin{proposition}\label{basic}
	The set $\{\mu\in\widetilde{A}(\lambda)\mid \mu-\lambda \text{ is }\Delta^+\text{-dominant}\}$ is a complete family of representatives for $A(\lambda)$.
\end{proposition}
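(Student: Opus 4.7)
The plan is to prove existence and uniqueness of the dominant representative in each $W$-orbit separately. Let $\Delta_\lambda := \{\alpha \in \Delta : \langle \alpha, \lambda\rangle = 0\}$, $\Delta^+_\lambda := \Delta_\lambda \cap \Delta^+$, and let $W_\lambda \subseteq W$ denote the stabilizer of $\lambda$, which is generated by the reflections $s_\alpha$ with $\alpha \in \Delta_\lambda$.

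\emph{Uniqueness} is short: suppose $\mu, \mu' \in \widetilde{A}(\lambda)$ lie in the same $W$-orbit and both $\mu - \lambda$ and $\mu' - \lambda$ are $\Delta^+$-dominant. Because $\lambda$ is $\Delta^+$-dominant and the dominant cone is closed under addition, $\mu = \lambda + (\mu - \lambda)$ and $\mu'$ are both $\Delta^+$-dominant. Two $\Delta^+$-dominant elements of the same $W$-orbit must coincide, so $\mu = \mu'$.

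For \emph{existence}, the first step will be to show that $\widetilde{A}(\lambda)$ is stable under $W_\lambda$. Decomposing $\rho = \rho_\lambda + \rho^\lambda$ with $\rho_\lambda := \frac{1}{2}\sum_{\alpha \in \Delta^+_\lambda}\alpha$, every $w \in W_\lambda$ fixes $\lambda$ as well as $\rho^\lambda$: indeed, for $\alpha \in \Delta^+ \setminus \Delta^+_\lambda$ one has $\langle w\alpha, \lambda\rangle = \langle \alpha, \lambda\rangle > 0$, which by $\Delta^+$-dominance of $\lambda$ forces $w\alpha \in \Delta^+ \setminus \Delta^+_\lambda$, so $w$ permutes the summands of $2\rho^\lambda$. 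Applying Lemma \ref{otherspin} inside the Levi subalgebra $\mathfrak{l} := \mathfrak{t} \oplus \bigoplus_{\alpha \in \Delta_\lambda}\mathfrak{g}_\alpha$, whose Weyl group is $W_\lambda$, one obtains that the set $\{\rho_\lambda - \sum_{\alpha \in A}\alpha : A \subseteq \Delta^+_\lambda\}$ is $W_\lambda$-invariant. Since $\widetilde{A}(\lambda) = \lambda + \rho^\lambda + \{\rho_\lambda - \sum_{\alpha \in A}\alpha : A \subseteq \Delta^+_\lambda\}$, the $W_\lambda$-stability follows.

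Given $\mu_0 \in \widetilde{A}(\lambda)$, I then choose the unique $w \in W_\lambda$ for which $\nu_1 := w\mu_0 - \lambda$ is dominant with respect to the subroot system $\Delta^+_\lambda$, and set $\mu_1 := w\mu_0 \in \widetilde{A}(\lambda)$. Writing $\nu_1 = \rho - \sum_{\alpha \in A_1}\alpha$ with $A_1 \subseteq \Delta^+_\lambda$, it remains to verify $\nu_1$ is genuinely $\Delta^+$-dominant. Because $\lambda$ is dominant, a short argument shows the simple roots of $\Delta^+_\lambda$ as a root system coincide with the simple roots of $\Delta^+$ lying in $\Delta_\lambda$, so dominance with respect to the corresponding coroots follows from $\Delta^+_\lambda$-dominance of $\nu_1$. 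The main obstacle is treating $\beta$ simple in $\Delta^+$ with $\beta \notin \Delta_\lambda$; here the key input is the inequality $\langle \alpha, \beta^\vee\rangle \leq 0$ for every $\alpha \in \Delta^+_\lambda$. Indeed, were $\langle \alpha, \beta^\vee\rangle > 0$, then $\alpha - \beta$ would be a root and $\langle \alpha - \beta, \lambda\rangle = -\langle \beta, \lambda\rangle < 0$ would force $\alpha - \beta$ to be negative by dominance of $\lambda$, making $\beta = \alpha + (\beta - \alpha)$ a nontrivial sum of two positive roots and contradicting the simplicity of $\beta$. Therefore $\langle \nu_1, \beta^\vee\rangle = 1 - \sum_{\alpha \in A_1}\langle \alpha, \beta^\vee\rangle \geq 1$, completing the proof.
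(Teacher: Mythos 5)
Your proof is correct, but it handles both halves of the statement differently from the paper. The paper first shows that two elements of $\widetilde{A}(\lambda)$ are $W$-conjugate if and only if they are $W_\lambda$-conjugate (so that $A(\lambda)=\widetilde{A}(\lambda)/W_\lambda$, Corollary \ref{lambdagroup}), and then proves Lemma \ref{dominant}, where dominance against the roots not orthogonal to $\lambda$ is obtained by a contradiction argument that invokes the $W$-invariance of the set of spin weights (Lemma \ref{otherspin} for the full system $\Delta$) paired against $\lambda$. You bypass the first lemma entirely: uniqueness of the representative follows from the observation that $\mu=\lambda+(\mu-\lambda)$ is itself $\Delta^+$-dominant and a $W$-orbit contains at most one dominant element; this is cleaner and also makes explicit the ``no duplicates'' point the paper leaves implicit. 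For existence you share the paper's skeleton (identify $W_\lambda$ with the Weyl group of the subsystem $\Delta_\lambda$ of roots orthogonal to $\lambda$, split $\rho=\rho_\lambda+\rho^\lambda$ with $\rho^\lambda$ fixed by $W_\lambda$, and use transitivity on the chambers of $\Delta_\lambda$), but you replace the spin-weight contradiction by an elementary check at the simple roots: for $\beta$ simple with $\langle\beta,\lambda\rangle>0$, every $\alpha\in\Delta_\lambda^+$ satisfies $\langle\alpha,\beta^\vee\rangle\le 0$ (otherwise $\beta=\alpha+(\beta-\alpha)$ would decompose a simple root), whence $\langle\nu_1,\beta^\vee\rangle\ge 1$. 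Your use of Lemma \ref{otherspin} inside the Levi attached to $\Delta_\lambda$ to show that $\widetilde{A}(\lambda)$ is $W_\lambda$-stable is precisely the point the paper uses implicitly when asserting that the representatives $\nu_i$ may be chosen dominant. Two cosmetic remarks: the element $w\in W_\lambda$ making $\nu_1$ $\Delta_\lambda^+$-dominant need not be unique (only $\nu_1$ is), and the identification of the simple roots of $\Delta_\lambda^+$ with the simple roots of $\Delta^+$ lying in $\Delta_\lambda$, while true for dominant $\lambda$, is not needed, since $\Delta_\lambda^+$-dominance already gives the required nonnegative pairing with any positive root in $\Delta_\lambda$.
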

In other words, by taking all elements 

\begin{equation*}
\lambda+\rho-\sum\limits_{\alpha\in A}\alpha\in\widetilde{A}(\lambda),\end{equation*}
with $\rho-\sum\limits_{\alpha\in A}\alpha$
being $\Delta^+$-dominant, we obtain a complete family of representatives for $A(\lambda)$. Recall that 
\begin{equation*}
\lambda+\rho-\sum\limits_{\alpha\in A}\alpha \end{equation*} 
belongs to $\widetilde{A}(\lambda)$ if $A$ is a subset of $\big(\mathbb{R}\{\lambda\}\big)^{\perp}\cap\Delta^+$. The rest of this section is devoted to the proof of Proposition \ref{basic}.

\begin{lemma} Let $A$ and $A'$ be subsets of $\big(\mathbb{R}\{\lambda\}\big)^{\perp}\cap\Delta^+$ and set
	
	\begin{equation*}
	\begin{array}{cl}\mu&=\lambda+\rho-\sum\limits_{\alpha\in A}\alpha,\\
	\mu'&=\lambda+\rho-\sum\limits_{\alpha'\in A'}\alpha'
	\end{array}
	\end{equation*}
	to be the corresponding elements of $\widetilde{A}(\lambda)$. Then $\mu$ and $\mu'$ are $W$-conjugate if and only if they are $W_\lambda$-conjugate, where $W_\lambda$ stands for the stabilizer of $\lambda$ in $W$.
\end{lemma}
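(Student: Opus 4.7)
The backward direction is immediate, since $W_\lambda\subseteq W$. So the whole content is the forward direction: assuming $\mu'=w\mu$ for some $w\in W$, I need to prove $w\lambda=\lambda$ (which is stronger than merely adjusting $w$ to a $W_\lambda$-element, but it turns out to drop out directly). My plan is to rewrite $w\mu$ using Lemma \ref{otherspin}, extract a linear identity relating $w\lambda-\lambda$ to sums of positive roots, and then pair it with $\lambda$ to squeeze $\langle w\lambda-\lambda,\lambda\rangle$ between $0$ and $0$.

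First, applying Lemma \ref{otherspin} with $B=A$ produces a subset $A''\subseteq\Delta^+$ such that
\begin{equation*}
w\Bigl(\rho-\sum_{\alpha\in A}\alpha\Bigr)=\rho-\sum_{\alpha\in A''}\alpha.
\end{equation*}
Consequently $w\mu=w\lambda+\rho-\sum_{\alpha\in A''}\alpha$, and equating this with $\mu'=\lambda+\rho-\sum_{\alpha'\in A'}\alpha'$ gives
\begin{equation*}
w\lambda-\lambda=\sum_{\alpha\in A''}\alpha-\sum_{\alpha'\in A'}\alpha'.
\end{equation*}
Now I take the inner product with $\lambda$. On the right-hand side, the hypothesis $A'\subseteq(\mathbb{R}\{\lambda\})^\perp\cap\Delta^+$ kills the $A'$-sum, while $A''\subseteq\Delta^+$ together with $\Delta^+$-dominance of $\lambda$ forces $\langle\alpha,\lambda\rangle\geq 0$ for each $\alpha\in A''$. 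Thus $\langle w\lambda-\lambda,\lambda\rangle\geq 0$.

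The opposite bound comes from Cauchy--Schwarz, since $W$ preserves the Killing norm: $\langle w\lambda,\lambda\rangle\leq\|w\lambda\|\cdot\|\lambda\|=\|\lambda\|^2$, hence $\langle w\lambda-\lambda,\lambda\rangle\leq 0$, with equality iff $w\lambda$ is a positive multiple of $\lambda$, i.e.\ iff $w\lambda=\lambda$. Combining the two inequalities forces equality throughout, so $w\lambda=\lambda$, i.e.\ $w\in W_\lambda$, which finishes the nontrivial direction. I do not anticipate a genuine obstacle here: the only subtlety worth verifying is that the Cauchy--Schwarz equality case indeed excludes the antipodal alternative (ruled out by equality of norms and positivity of $\|\lambda\|^2$), and that Lemma \ref{otherspin} genuinely applies with no requirement on $A$ beyond $A\subseteq\Delta^+$, which is clear from its statement.
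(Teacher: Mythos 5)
Your proof is correct and follows essentially the paper's own route: both apply Lemma \ref{otherspin} to rewrite the $w$-translate of a spin weight $\rho-\sum_{\alpha}\alpha$, pair the resulting identity with $\lambda$, use $\Delta^+$-dominance of $\lambda$ together with the orthogonality of $A$, $A'$ to $\lambda$ to squeeze $\langle w\lambda-\lambda,\lambda\rangle$ to zero, and then conclude $w\lambda=\lambda$ from $\lVert w\lambda\rVert=\lVert\lambda\rVert$. The only cosmetic difference is that you get the bound $\langle w\lambda-\lambda,\lambda\rangle\leq 0$ from Cauchy--Schwarz and $W$-invariance of the norm, whereas the paper gets the corresponding sign from the fact that $w\lambda$ is a weight of $V$, so $\lambda-w\lambda$ is a nonnegative sum of positive roots.
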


\begin{proof}
	The "if" direction is obvious. For the "only if" direction, let us suppose that $w$ is an element of $W$ such that $\mu=w\mu'$. Then 
	\begin{equation*}
	\begin{array}{crl}
	&\mu&=w\mu'\\
	\Longrightarrow&\lambda+\rho-\sum\limits_{\alpha\in A}\alpha&=w(\lambda+\rho-\sum\limits_{\alpha'\in A'}\alpha')\\
	\Longrightarrow&\lambda-w\lambda&=w(\rho-\sum\limits_{\alpha'\in A'}\alpha')-(\rho-\sum\limits_{\alpha\in A}\alpha)\\
	\Longrightarrow&\langle\lambda-w\lambda,\lambda\rangle&=-\langle(\rho-\sum\limits_{\alpha\in A}\alpha)-w(\rho-\sum\limits_{\alpha'\in A'}\alpha'),\lambda\rangle\\
	\Longrightarrow&\langle\lambda-w\lambda,\lambda\rangle&=-\langle\rho-w(\rho-\sum\limits_{\alpha'\in A'}\alpha'),\lambda\rangle.
	\end{array}
	\end{equation*}
	Since $w\lambda$ 
	is a weight of $V$, $\lambda-w\lambda$ is a nonnegative sum of positive roots, while $\lambda$ is dominant. Thus, the left-hand side of the above expression is nonnegative. On the other hand, using Lemma \ref{otherspin}, one can see that the right-hand side is nonpositive and so both sides must be $0$. Then $w\lambda=\lambda$, since
	\begin{equation*}
	\begin{array}{rl}
	\lVert w\lambda\rVert^2&=\lVert (w\lambda-\lambda)+\lambda\rVert^2\\
	&=\lVert w\lambda-\lambda\rVert^2+\lVert\lambda\rVert^2\\
	&\geq \lVert\lambda\rVert^2.
	\end{array}
	\end{equation*}
	But $\lVert w\lambda\rVert^2=\lVert \lambda\rVert^2$, $\lVert w\lambda-\lambda\rVert^2=0$ and thus $w\lambda=\lambda$. 
\end{proof}

\begin{corollary}\label{lambdagroup}
	The set $A(\lambda)$ coincides with $\widetilde{A}(\lambda)/W_\lambda$.
\end{corollary}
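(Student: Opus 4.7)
The plan is to derive the corollary directly from the preceding lemma together with the trivial inclusion $W_\lambda \subseteq W$. The claim $A(\lambda)=\widetilde{A}(\lambda)/W_\lambda$ amounts to showing that, on the set $\widetilde{A}(\lambda)$, the equivalence relations induced by $W$-conjugacy and by $W_\lambda$-conjugacy coincide (and, for the quotient to make sense, that $W_\lambda$ preserves $\widetilde{A}(\lambda)$).

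First I would note that since $W_\lambda$ is a subgroup of $W$, any two elements of $\widetilde{A}(\lambda)$ that are $W_\lambda$-conjugate are automatically $W$-conjugate. Conversely, if $\mu,\mu'\in\widetilde{A}(\lambda)$ are $W$-conjugate, the preceding lemma asserts exactly that they are already $W_\lambda$-conjugate. Hence the two equivalence relations restricted to $\widetilde{A}(\lambda)$ have the same equivalence classes, so the quotient sets are in natural bijection.

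The remaining point to verify is that $W_\lambda$ indeed stabilizes the subset $\widetilde{A}(\lambda)\subset\mathfrak{t}^*$. Given $w\in W_\lambda$ and $\mu=\lambda+\rho-\sum_{\alpha\in A}\alpha$ with $A\subseteq(\mathbb{R}\{\lambda\})^\perp\cap\Delta^+$, one has
\begin{equation*}
w\mu=w\lambda+w\!\left(\rho-\sum_{\alpha\in A}\alpha\right)=\lambda+\rho-\sum_{\alpha'\in A'}\alpha'
\end{equation*}
for some $A'\subseteq\Delta^+$, by Lemma~\ref{otherspin} applied to $\rho-\sum_{\alpha\in A}\alpha$. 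To show $A'\subseteq(\mathbb{R}\{\lambda\})^\perp$, I would pair $w\mu-\lambda$ with $\lambda$: since $w\lambda=\lambda$,
\begin{equation*}
\langle w\mu-\lambda,\lambda\rangle=\langle w(\mu-\lambda),w\lambda\rangle=\langle\mu-\lambda,\lambda\rangle=\langle\rho,\lambda\rangle,
\end{equation*}
where the last equality uses $A\subseteq(\mathbb{R}\{\lambda\})^\perp$. Comparing with $\langle w\mu-\lambda,\lambda\rangle=\langle\rho,\lambda\rangle-\sum_{\alpha'\in A'}\langle\alpha',\lambda\rangle$ forces $\sum_{\alpha'\in A'}\langle\alpha',\lambda\rangle=0$, and dominance of $\lambda$ together with $A'\subseteq\Delta^+$ then yields $\langle\alpha',\lambda\rangle=0$ for every $\alpha'\in A'$. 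Thus $w\mu\in\widetilde{A}(\lambda)$.

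There is essentially no main obstacle here: the corollary is a bookkeeping consequence of the lemma, the only nontrivial verification being that $W_\lambda$ preserves $\widetilde{A}(\lambda)$, which is handled by the short dominance argument above.
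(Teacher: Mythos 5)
Your proposal is correct and follows essentially the same route as the paper: the corollary is an immediate bookkeeping consequence of the preceding lemma, since $W_\lambda\subseteq W$ gives one implication trivially and the lemma gives the converse, so the two conjugacy relations induce the same classes on $\widetilde{A}(\lambda)$. Your extra verification that $W_\lambda$ preserves $\widetilde{A}(\lambda)$ (via Lemma~\ref{otherspin} and dominance of $\lambda$) is correct, though not strictly needed, since the quotient can be read simply as the set of conjugacy-equivalence classes.
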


As a consequence, if $\{\nu_i\}_i$ is a complete family of representatives for the $W_\lambda$-equivalence classes of the set 

\begin{equation*}
\{\rho-\sum\limits_{\alpha\in A}\alpha\mid A\subseteq \big(\mathbb{R}\{\lambda\}\big)^\perp\cap\Delta^+ \},
\end{equation*}
then the set $\{\lambda+\nu_i\}_i$ is a complete family of representatives for $A(\lambda)$. The following lemma ensures that we can choose $\nu_i$ to be $\Delta^+$-dominant and so it completes the proof of Proposition \ref{basic}.

\begin{lemma}\label{dominant}
	Any element of the form 
	\begin{equation*}\rho-\sum\limits_{\alpha\in A}\alpha\end{equation*}
	with $A\subseteq \big(\mathbb{R}\{\lambda\}\big)^\perp\cap\Delta^+$ is $W_\lambda$-conjugate to a $\Delta^+$-dominant element.
\end{lemma}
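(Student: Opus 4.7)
The plan is to introduce the root subsystem $\Delta_\lambda := \{\alpha \in \Delta : \langle \alpha, \lambda \rangle = 0\}$, whose positive subsystem is $\Delta_\lambda^+ := \Delta_\lambda \cap \Delta^+ = \big(\mathbb{R}\{\lambda\}\big)^\perp \cap \Delta^+$ and whose Weyl group is exactly $W_\lambda$ (a classical fact going back to Chevalley). Since $\lambda$ is $\Delta^+$-dominant, a standard check shows that the simple roots of $\Delta_\lambda$ (relative to $\Delta_\lambda^+$) are exactly those simple roots of $\Delta$ belonging to $\Delta_\lambda$, and that every $\alpha \in \Delta_\lambda^+$ is a nonnegative integer combination of only those simple roots.

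Setting $\rho_\lambda := \tfrac{1}{2}\sum_{\alpha \in \Delta_\lambda^+} \alpha$, I first establish that $\rho - \rho_\lambda = \tfrac{1}{2}\sum_{\gamma \in \Delta^+ \setminus \Delta_\lambda^+} \gamma$ is fixed by $W_\lambda$, and hence orthogonal to every root of $\Delta_\lambda$. The key point is that any $w \in W_\lambda$ permutes $\Delta^+ \setminus \Delta_\lambda^+$ within itself: if $\gamma \in \Delta^+ \setminus \Delta_\lambda$ then $\langle \gamma, \lambda \rangle > 0$, so $\langle w\gamma, \lambda \rangle = \langle w\gamma, w\lambda \rangle = \langle \gamma, \lambda \rangle > 0$, which by the $\Delta^+$-dominance of $\lambda$ forces $w\gamma \in \Delta^+ \setminus \Delta_\lambda$.

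Given $\nu = \rho - \sum_{\alpha \in A} \alpha$ with $A \subseteq \Delta_\lambda^+$, I decompose $\nu = (\rho - \rho_\lambda) + \nu'$ where $\nu' := \rho_\lambda - \sum_{\alpha \in A} \alpha$. Applying Lemma \ref{otherspin} to the root system $\Delta_\lambda$ (playing the role of $\Delta$ in that lemma), I pick $w \in W_\lambda$ so that $w\nu'$ is $\Delta_\lambda^+$-dominant and still of the form $\rho_\lambda - \sum_{\alpha' \in A'}\alpha'$ for some $A' \subseteq \Delta_\lambda^+$. Since $\rho - \rho_\lambda$ is $W_\lambda$-fixed, this yields
\[
w\nu \;=\; (\rho - \rho_\lambda) + w\nu' \;=\; \rho - \sum_{\alpha' \in A'}\alpha'.
\]

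It remains to test $\Delta^+$-dominance of $w\nu$ on simple roots. For a simple root $\alpha_i \in \Delta_\lambda$, we get $\langle w\nu, \alpha_i \rangle = \langle w\nu', \alpha_i \rangle + \langle \rho - \rho_\lambda, \alpha_i \rangle = \langle w\nu', \alpha_i \rangle \geq 0$ using the $\Delta_\lambda^+$-dominance of $w\nu'$ and the orthogonality from the second paragraph. For a simple root $\alpha_i \notin \Delta_\lambda$, the structural fact from the first paragraph says every $\alpha' \in A' \subseteq \Delta_\lambda^+$ is a nonnegative combination of simple roots different from $\alpha_i$, hence $\langle \alpha', \alpha_i \rangle \leq 0$, and therefore
\[
\langle w\nu, \alpha_i \rangle \;=\; \langle \rho, \alpha_i \rangle - \sum_{\alpha' \in A'}\langle \alpha', \alpha_i\rangle \;\geq\; \langle \rho, \alpha_i \rangle \;>\; 0.
\]
The main obstacle I expect is precisely this last bullet: guaranteeing dominance on simple roots \emph{outside} $\Delta_\lambda$ hinges on the fact that $\Delta_\lambda^+$ is built entirely from simple roots of $\Delta_\lambda$, which in turn relies on the $\Delta^+$-dominance of $\lambda$.
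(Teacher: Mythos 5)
Your proof is correct, and it shares the paper's overall skeleton: the same subsystem $\Delta_\lambda=\{\alpha\in\Delta\mid\langle\alpha,\lambda\rangle=0\}$ (the paper's $\Delta_1$), the same observation that $W_\lambda$ permutes $\Delta^+\setminus\Delta_\lambda^+$ and hence fixes $\rho-\rho_\lambda$, and the same idea of using the reflection subgroup attached to $\Delta_\lambda$ to make the ``$\rho_\lambda$-part'' dominant. Where you genuinely diverge is in the decisive step, dominance in the directions not orthogonal to $\lambda$. The paper proves that $\rho-\sum_{\alpha\in A}\alpha$ already pairs nonnegatively with every $\gamma\in\Delta^+\setminus\Delta_\lambda^+$ \emph{before} conjugating, by a contradiction argument: if some pairing were negative, $s_\gamma\bigl(\rho-\sum_{\alpha\in A}\alpha\bigr)=\rho-\sum_{\alpha\in A}\alpha+n\gamma$ with $n>0$ would, by Lemma \ref{otherspin} applied to the full system $\Delta$ ($W$-stability of the spin weights), have to equal $\rho-\sum_{\beta\in B}\beta$, and pairing with the dominant $\lambda$ gives a sign contradiction; it then only needs $w_0^{-1}$ to preserve $\Delta^+\setminus\Delta_\lambda^+$. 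You instead apply Lemma \ref{otherspin} inside the subsystem $\Delta_\lambda$ to keep the conjugated weight in the shape $\rho-\sum_{\alpha'\in A'}\alpha'$ with $A'\subseteq\Delta_\lambda^+$, and then verify dominance simple root by simple root, using the standard fact (valid because $\lambda$ is $\Delta^+$-dominant, as it is in the context where the lemma is used) that every root of $\Delta_\lambda^+$ is a nonnegative combination of the simple roots of $\Delta$ lying in $\Delta_\lambda$, together with $\langle\alpha',\alpha_i\rangle\le 0$ for simple $\alpha_i$ outside the support and $\langle\rho,\alpha_i\rangle>0$. Your route is more elementary and even yields strict positivity against the simple roots outside $\Delta_\lambda$, at the cost of the extra structural input about $\Delta_\lambda^+$ and of applying Lemma \ref{otherspin} to a subsystem (legitimate, since $\Delta_\lambda$ is the root system of the reductive centralizer of $\lambda$, as the paper itself verifies when it shows $\Delta_1$ is a root system); the paper's route avoids that structural fact but needs the reflection-contradiction argument and the identification $W_1=W_\lambda$ from Knapp, of which you only use the easy inclusion of the reflection subgroup into $W_\lambda$, which indeed suffices for the statement.
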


\begin{proof}
	Let $\Delta_1^+:=\big(\mathbb{R}\{\lambda\}\big)^\perp\cap\Delta^+$, i.e. the set of the positive roots orthogonal to $\lambda$, and set $\Delta_2^+:=\Delta^+\setminus\Delta_1^+$. The set $\Delta_1:=\Delta_1^+\cup(-\Delta_1^+)$ is a root system. Indeed, if $\alpha$ and $\beta$ are elements of $\Delta_1^+$, then, from the $W$-invariance of the Killing form:
	
	\begin{equation*}
	\begin{array}{rl}
	\langle s_\alpha(\beta),\lambda\rangle&=\langle\beta,s_\alpha(\lambda)\rangle\\
	&=\langle\beta,\lambda\rangle\\
	&=0,
	\end{array}
	\end{equation*}
	and thus $\Delta_1$ is stable with respect to the reflections generated by its elements. The other properties for $\Delta_1$ to be a root system can be easily checked. Let $W_1$ be the Weyl group defined by $\Delta_1$ and $\rho_1$ be the half-sum of the positive root of $\Delta_1$. Then $W_1$ fixes the element $\rho-\rho_1$, i.e. the half-sum of the positive roots included in $\Delta_2:=\Delta_2^+\cup(-\Delta_2^+)$. Note that $\Delta_2$ need not be a root system. Indeed, if $\beta$ is a root of $\Delta_1$ and $\gamma$ is a root in $\Delta_2^+$, then
	
	\begin{equation*}
	\begin{array}{rl}
	\langle s_\beta(\gamma),\lambda\rangle&=\langle\gamma,s_\beta(\lambda)\rangle\\
	&=\langle\gamma,\lambda\rangle\\
	&>0.
	\end{array}
	\end{equation*}
	Thus $W_1\Delta_2^+=\Delta_2^+$, $W_1$ stabilizes $\rho-\rho_1$ and $\langle\rho-\rho_1,\beta\rangle=0$ for any $\beta\in\Delta_1$. Since the Weyl group of a root system acts transitively on its Weyl chambers, if $A\subseteq \Delta_1^+$, then there is some $w_0$ in $W_1$ such that 
	\begin{equation*}
	w_0(\rho_1-\sum\limits_{\alpha\in A}\alpha)
	\end{equation*}
	is $\Delta_1^+$-dominant. In this case, for every $\beta\in\Delta_1^+$,
	
	\begin{equation*}
	\begin{array}{rl}
	\langle w_0(\rho-\sum\limits_{\alpha\in A}\alpha),\beta\rangle&=\langle \rho-\rho_1+w_0(\rho_1-\sum\limits_{\alpha\in A}\alpha),\beta\rangle\\
	&=\langle \rho-\rho_1,\beta\rangle+\langle w_0(\rho_1-\sum\limits_{\alpha\in A}\alpha),\beta\rangle\\
	&=\langle w_0(\rho_1-\sum\limits_{\alpha\in A}\alpha),\beta\rangle\\
	&\geq 0.
	\end{array}
	\end{equation*}
	Therefore $w_0(\rho-\sum\limits_{\alpha\in A}\alpha)$ is $\Delta_1^+$-dominant.

	We will be finished once we show that this is also the case for $\Delta_2^+$. More precisely, we want to show that 
	\begin{equation*}\langle w_0(\rho-\sum\limits_{\alpha\in A}\alpha),\gamma\rangle\geq 0,
	\end{equation*}
	for any $\gamma$ in $\Delta_2^+$. Since $w_0^{-1}\Delta_2^+=\Delta_2^+$, it suffices to show that \begin{equation*}\langle \rho-\sum\limits_{\alpha\in A}\alpha,\gamma\rangle\geq 0,\end{equation*} for any $\gamma$ in $\Delta_2^+$. Suppose that for some $\gamma$ in $\Delta_2^+$, 
	\begin{equation*}
	\langle \rho-\sum\limits_{\alpha\in A}\alpha,\gamma\rangle<0. \end{equation*}
	Then one has that
	\begin{equation}\label{eq1}
	s_\gamma(\rho-\sum\limits_{\alpha\in A}\alpha)=\rho-\sum\limits_{\alpha\in A}\alpha+n\gamma
	\end{equation}
	with $n:=-\frac{2}{\lVert\gamma\rVert^2}\langle \rho-\sum\limits_{\alpha\in A}\alpha,\gamma\rangle$ being positive.

	On the other hand, according to Lemma \ref{otherspin}, the set of weights of $S$ is $W$-stable, and so the element 
	\begin{equation*} s_\gamma(\rho-\sum\limits_{\alpha\in A}\alpha)\end{equation*}
	must be a weight of $S$.  Thus it must be of the form \eqref{Sweight}:
	\begin{equation}\label{eq2}
	s_\gamma(\rho-\sum\limits_{\alpha\in A}\alpha)=\rho-\sum\limits_{\beta\in B}\beta
	\end{equation}
	for some subset $B$ of $\Delta^+$. 
	
	Combining the equalities \eqref{eq1} and \eqref{eq2}, one obtains that
	\begin{equation*}
	\begin{array}{crl}
	&\rho-\sum\limits_{\alpha\in A}\alpha+n\gamma&=\rho-\sum\limits_{\beta\in B}\beta\\
	\Longrightarrow& n\gamma&=\sum\limits_{\alpha\in A}\alpha-\sum\limits_{\beta\in B}\beta\\
	\Longrightarrow&n\langle\gamma,\lambda\rangle&=-\sum\limits_{\beta\in B}\langle\beta,\lambda\rangle.
	\end{array}
	\end{equation*}
	The left-hand side of the last equality is positive, since $\gamma$ has been chosen to be an element of $\Delta_2^+$, while the right-hand side is non positive, since $B$ consists of positive roots and $\lambda$ is dominant. This is a contradiction. Consequently, one has
	\begin{equation*}\langle\rho-\sum\limits_{\alpha\in A}\alpha,\gamma\rangle\geq0\end{equation*} 
	for any $\gamma$ in $\Delta_2^+$ and thus the element 
	$
	w_0(\rho-\sum\limits_{\alpha\in A}\alpha)
	$
	is $\Delta^+$-dominant. 
	
	According to \cite[Proposition 2.72]{Knapp1}, $W_1$ coincides with the stabilizer $W_\lambda$ of $\lambda$ in $W$. Hence the element
	$
	\rho-\sum\limits_{\alpha\in A}\alpha
	$
	turns out to be $W_\lambda$-conjugate with a dominant one.
\end{proof}

This now completes the proof of Proposition \ref{basic}.

\subsection{Noncubic Dirac operators and isotypic components}\label{mainfin}
The cubic Dirac operator $D_{\mathfrak{g},\mathfrak{h}}(V)$ acts in the same way on every irreducible representation of an isotypic component. Namely, on the isotypic component corresponding to a highest weight $\mu\in\mathfrak{t}^*$, $D_{\mathfrak{g},\mathfrak{h}}(V)$ acts by the scalars
\begin{equation*}
\pm\sqrt{\lVert\lambda+\rho\rVert^2-\lVert\mu+\rho_\mathfrak{h}\rVert^2}.
\end{equation*}
As a consequence, either the whole isotypic component belongs to $\ker D_{\mathfrak{g},\mathfrak{h}}(V)$ or its intersection with $\ker D_{\mathfrak{g},\mathfrak{h}}(V)$ is trivial. From Theorem \ref{kerneltcubic}, this is true for every $D^t_{\mathfrak{g},\mathfrak{h}}(V)$, $t\in(0,2)$. On the other hand, according to Theorem \ref{thmbasic}, this turns out to be also true for the noncubic Dirac operator $\widehat{D}_{\mathfrak{g},\mathfrak{h}}(V)$ when $\mathfrak{h}$ coincides with a Cartan subalgebra $\mathfrak{t}$ of $\mathfrak{g}$. 
Note that, since each isotypic component of $V\otimes S$ is generated by "monomials", i.e. by elements of the form $v\otimes u$ with $v$ and $u$ being  weight vectors of $V$ and $S$ respectively, one deduces that this is the case for $\ker D^t_{\mathfrak{g},\mathfrak{h}}(V)$, $t\in(0,2)$, and, in the case when $\mathfrak{h}=\mathfrak{t}$ for $\ker\widehat{D}_{\mathfrak{g},\mathfrak{h}}(V)$.
Nevertheless, as the following example indicates, these properties are not true for $\widehat{D}_{\mathfrak{g},\mathfrak{h}}(V)$ if $\mathfrak{h}\neq\mathfrak{t}$. This complicates the study of $\ker \widehat{D}_{\mathfrak{g},\mathfrak{h}}(V)$ in this case. Consider the following example.

\begin{example}\label{nonpolyn}
	Using the notation of Example \ref{firstexample}, let $\mathfrak{g}:=\mathfrak{sl}(4,\mathbb{C})$ and
	\begin{equation*}\mathfrak{h}:=\mathfrak{t}\oplus\mathfrak{g}_{\varepsilon_3-\varepsilon_4}\oplus\mathfrak{g}_{-\varepsilon_3+\varepsilon_4}\end{equation*}
	with $\mathfrak{q}:=\mathfrak{h}^\perp$ such that
	\begin{equation*}
	\mathfrak{g}=\mathfrak{h}\oplus\mathfrak{q}.
	\end{equation*}
	Consider the Clifford algebra $\mathbf{C}(\mathfrak{q})$ of $\mathfrak{q}$ and the space of spinors $S$ for $\mathbf{C}(\mathfrak{q})$, defined by the negative root spaces of $\mathfrak{q}$. For every $\alpha\in\Delta^+$, choose
	$e_{\pm\alpha}\in\mathfrak{g}_{\pm\alpha}$ such that $\langle e_\alpha,e_{-\alpha}\rangle=1$.
	Furthermore, let $V:=\mathbb{C}^4$ be the standard representation of $\mathfrak{g}$, i.e. the irreducible $\mathfrak{g}$-representation of highest weight 
	\begin{equation*}
	\lambda=\frac{3}{4}\varepsilon_1-\frac{1}{4}\varepsilon_2-\frac{1}{4}\varepsilon_3-\frac{1}{4}\varepsilon_4.
	\end{equation*}
	Then, the noncubic Dirac operator $\widehat{D}_{\mathfrak{g},\mathfrak{h}}(V)$ is given by
	\begin{align*}
	\widehat{D}_{\mathfrak{g},\mathfrak{h}}(V)=\sqrt{2}\{&e_{\varepsilon_1-\varepsilon_2}\otimes \gamma(e_{-\varepsilon_1+\varepsilon_2})
	+e_{\varepsilon_1-\varepsilon_3}\otimes \gamma(e_{-\varepsilon_1+\varepsilon_3})\\
	+&e_{\varepsilon_1-\varepsilon_4}\otimes \gamma(e_{-\varepsilon_1+\varepsilon_4})
	+e_{\varepsilon_2-\varepsilon_3}\otimes \gamma(e_{-\varepsilon_2+\varepsilon_3})\\
	+&e_{\varepsilon_2-\varepsilon_4}\otimes \gamma(e_{-\varepsilon_2+\varepsilon_4})
	+e_{-\varepsilon_1+\varepsilon_2}\otimes \gamma(e_{\varepsilon_1-\varepsilon_2})\\
	+&e_{-\varepsilon_1+\varepsilon_3}\otimes \gamma(e_{\varepsilon_1-\varepsilon_3})
	+e_{-\varepsilon_1+\varepsilon_4}\otimes \gamma(e_{\varepsilon_1-\varepsilon_4})\\
	+&e_{-\varepsilon_2+\varepsilon_3}\otimes \gamma(e_{\varepsilon_2-\varepsilon_3})
	+e_{-\varepsilon_2+\varepsilon_4}\otimes \gamma(e_{\varepsilon_2-\varepsilon_4})\}.
	\end{align*}
	If $\{v_1,v_2,v_3,v_4\}$ is the standard basis of $\mathbb{C}^4$, set 
	\begin{align*}
	x_1&:=v_3\otimes e_{-\varepsilon_1+\varepsilon_3}\wedge e_{-\varepsilon_1+\varepsilon_4}\wedge e_{-\varepsilon_2+\varepsilon_4}\\x_2&:=v_4\otimes e_{-\varepsilon_1+\varepsilon_3}\wedge e_{-\varepsilon_1+\varepsilon_4}\wedge e_{-\varepsilon_2+\varepsilon_3}.
	\end{align*}
	Then
	\begin{align*}
	\widehat{D}_{\mathfrak{g},\mathfrak{h}}(V)x_1&=v_2\otimes e_{-\varepsilon_1+\varepsilon_3}\wedge e_{-\varepsilon_1+\varepsilon_4}\wedge e_{-\varepsilon_2+\varepsilon_3}\wedge e_{-\varepsilon_2+\varepsilon_4}\neq0\\
	\widehat{D}_{\mathfrak{g},\mathfrak{h}}(V)x_2&=-v_2\otimes e_{-\varepsilon_1+\varepsilon_3}\wedge e_{-\varepsilon_1+\varepsilon_4}\wedge e_{-\varepsilon_2+\varepsilon_3}\wedge e_{-\varepsilon_2+\varepsilon_4}\neq0
	\end{align*}
	but
	\begin{equation*}
	\widehat{D}_{\mathfrak{g},\mathfrak{h}}(V)(x_1+x_2)=0.
	\end{equation*}
	Therefore, the elements $x_1$, $x_2$ and $x_1+x_2$ belong to the same isotypic component of $V\otimes S$, the element $x_1+x_2$ belonging to $\ker\widehat{D}_{\mathfrak{g},\mathfrak{h}}(V)$, while $x_1$ and $x_2$ do not. In other words, only a part of this isotypic component belongs to $\ker\widehat{D}_{\mathfrak{g},\mathfrak{h}}(V)$.
\end{example}

\section{Noncubic Dirac operators for classical Lie algebras}\label{sectionclassical}
As before, suppose that $\mathfrak{g}$ is complex and semisimple while $\mathfrak{h}$ is a Cartan subalgebra $\mathfrak{t}$ of $\mathfrak{g}$. Let $V$ be an irreducible finite-dimensional representation of $\mathfrak{g}$ and $\Pi(V)$ the set of all weights of $V$.
For every weight $\nu\in\Pi(V)$, set 
\begin{equation*}
\begin{array}{rrl}
&^1\Delta^{(\nu)}&:=\{\alpha\in\Delta^+\mid \nu+\alpha\in\Pi(V)\},\\
&^2\Delta^{(\nu)}&:=\{\alpha\in\Delta^+\mid \nu-\alpha\in\Pi(V)\},\\
\text{and}& \mathcal{P}^{(\nu)}&:=\{I\in \mathcal{P}(\Delta^+)\mid
{}^1\Delta^{(\nu)}\subseteq I \text{ and } {}^2\Delta^{(\nu)}\cap I=\emptyset \},
\end{array}
\end{equation*}
where $\mathcal{P}(\Delta^+)$ stands for the power set of $\Delta^+$.

\begin{definition}\index{property $(*)$}
	We say that a representation $V$ of $\mathfrak{g}$ satisfies property $(*)$, if for every weight $\nu$ of $V$ and every root $\alpha$ of ${}^1\hspace{-0.5mm}\Delta^{(\nu)}$ (respectively of ${}^2\hspace{-0.5mm}\Delta^{(\nu)}$), we have $\pi(e_\alpha)v_\nu\neq 0$ (respectively $\pi(e_{-\alpha})v_\nu\neq 0$) for every nonzero weight vector $v_\nu$ of $V_\nu$. In other words, for $\nu$ and $\nu+\alpha$ (respectively $\nu-\alpha$) being in $\Pi(V)$, the linear map
	\begin{equation*}
	\begin{array}{rl}
	\pi(e_\alpha)&:V_\nu\longrightarrow V_{\nu+\alpha}\\
	(\text{respectively } \pi(e_{-\alpha)}&:V_\nu\longrightarrow V_{\nu-\alpha})
	\end{array}
	\end{equation*}
	is injective.
\end{definition}

\begin{theorem}\label{property*}
	Suppose $\nu$, ${}^1\Delta^{(\nu)}$ and ${}^2\Delta^{(\nu)}$ are as above and $V$ satisfies property $(*)$. Then, one has
	\begin{equation*}
	\ker\widehat{D}_{\mathfrak{g},\mathfrak{t}}(V)=\bigoplus\limits_{\nu\in\Pi(V)}\langle v_\nu\otimes e_I\mid I\in \mathcal{P}^{(\nu)}\rangle.
	\end{equation*} 
\end{theorem}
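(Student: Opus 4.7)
Proof proposal.

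The plan is to exploit Proposition \ref{intersectionD}, which reduces membership in $\ker\widehat{D}_{\mathfrak{g},\mathfrak{t}}(V)$ to being annihilated by each summand $\pi(e_\alpha) \otimes \gamma(e_{-\alpha})$, $\alpha \in \Delta$, and then to translate this into explicit combinatorial conditions on a natural basis. Fix a weight basis $\{v_\nu^{(j)}\}$ of $V$ (with $\nu \in \Pi(V)$ and $j$ a multiplicity index for $V_\nu$) and the basis $\{e_I : I \subseteq \Delta^+\}$ of $S = \bigwedge \mathfrak{q}^-$ given by $e_I := e_{-\alpha_{i_1}} \wedge \cdots \wedge e_{-\alpha_{i_s}}$ when $I = \{\alpha_{i_1},\ldots,\alpha_{i_s}\}$. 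Under the standard Clifford action, $\gamma(e_{-\alpha})$ for $\alpha \in \Delta^+$ is exterior multiplication by $e_{-\alpha}$, while $\gamma(e_\alpha)$ is the dual contraction (removing the factor $e_{-\alpha}$ up to sign).

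For the inclusion $\supseteq$, I would fix $I \in \mathcal{P}^{(\nu)}$ and verify termwise that $v_\nu^{(j)} \otimes e_I$ lies in $\ker\bigl(\pi(e_\alpha) \otimes \gamma(e_{-\alpha})\bigr)$ for every $\alpha \in \Delta$. For $\alpha \in \Delta^+$, either $\alpha \in I$ (so $e_{-\alpha} \wedge e_I = 0$) or $\alpha \notin I$, in which case ${}^1\Delta^{(\nu)} \subseteq I$ forces $\alpha \notin {}^1\Delta^{(\nu)}$, i.e.\ $\nu + \alpha \notin \Pi(V)$, and so $\pi(e_\alpha) v_\nu^{(j)} = 0$. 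A symmetric argument for $-\alpha \in \Delta^-$ uses ${}^2\Delta^{(\nu)} \cap I = \emptyset$ together with vanishing of the contraction when $\alpha \notin I$.

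For the opposite inclusion $\subseteq$, I would expand an arbitrary $x \in \ker\widehat{D}_{\mathfrak{g},\mathfrak{t}}(V)$ as $x = \sum c_{\nu,j,I}\, v_\nu^{(j)} \otimes e_I$ and apply $\pi(e_\alpha) \otimes \gamma(e_{-\alpha})$ termwise. Only the summands with $\alpha \notin I$ contribute; regrouping them by the shifted $V$-weight $\mu := \nu + \alpha$ and by the enlarged subset $I' := I \cup \{\alpha\}$, linear independence of the basis $\{v_\mu^{(j)} \otimes e_{I'}\}$ of $V \otimes S$ decouples the vanishing condition, yielding for each such pair $(\mu, I')$ the identity
\begin{equation*}
\sum_j \pm\, c_{\mu-\alpha,\, j,\, I' \setminus \{\alpha\}}\, \pi(e_\alpha)\, v_{\mu - \alpha}^{(j)} \;=\; 0 \quad \text{in } V_\mu.
\end{equation*}
At this point property $(*)$ enters decisively: when $\alpha \in {}^1\Delta^{(\mu-\alpha)}$ it guarantees that $\pi(e_\alpha)|_{V_{\mu-\alpha}}$ is injective, so the $\pi(e_\alpha) v_{\mu-\alpha}^{(j)}$ are linearly independent in $V_\mu$ and each $c_{\mu-\alpha, j, I' \setminus \{\alpha\}}$ must vanish. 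This yields $c_{\nu,j,I} = 0$ whenever $\alpha \in {}^1\Delta^{(\nu)}$ and $\alpha \notin I$; the dual computation with $\pi(e_{-\alpha}) \otimes \gamma(e_\alpha)$ gives $c_{\nu,j,I} = 0$ whenever $\alpha \in {}^2\Delta^{(\nu)}$ and $\alpha \in I$. Together these force the surviving multi-indices to satisfy ${}^1\Delta^{(\nu)} \subseteq I$ and ${}^2\Delta^{(\nu)} \cap I = \emptyset$, i.e.\ $I \in \mathcal{P}^{(\nu)}$.

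The main technical obstacle is the bookkeeping in the $\subseteq$ direction — verifying cleanly that the termwise decoupling across distinct $e_{I'}$ is legitimate, and locating the precise moment where property $(*)$ converts the ambient equality in $V \otimes S$ into pointwise vanishing of the coefficients $c_{\nu,j,I}$. Everything else amounts to mechanical manipulations with exterior and interior products in $\mathbf{C}(\mathfrak{q})$.
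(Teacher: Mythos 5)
Your proposal is correct and follows essentially the same route as the paper: it reduces via Proposition \ref{intersectionD} to the termwise kernels $\ker\bigl(\pi(e_\alpha)\otimes\gamma(e_{-\alpha})\bigr)$ and then uses property $(*)$ to convert injectivity of $\pi(e_\alpha)|_{V_\nu}$ into the combinatorial membership conditions ${}^1\Delta^{(\nu)}\subseteq I$, ${}^2\Delta^{(\nu)}\cap I=\emptyset$. Your $\subseteq$ direction spells out more explicitly than the paper does why each termwise kernel is spanned by the basis vectors $v_\nu^{(j)}\otimes e_I$, but the underlying argument is identical.
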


\begin{proof}
	Let $v_\nu\otimes e_I$ be a nonzero weight vector of $V\otimes S$ and $\alpha$ a positive root of $\Delta^+$. If $\nu+\alpha$ is not a weight of $V$, i.e. $\alpha\notin{}^1\Delta^{(\nu)}$, then $\pi(e_a)v_\nu=0$ and so $\pi(e_\alpha)\otimes \gamma(e_{-\alpha})(v_\nu\otimes e_I)=0$. On the other hand, if $\nu+\alpha$ is a weight of $V$, i.e. $\alpha\in{}^1\Delta^{(\nu)}$, and since $V$ satisfies property $(*)$, $\pi(e_\alpha)v_\nu\neq 0$. So $\pi(e_\alpha)\otimes \gamma(e_{-\alpha})(v_\nu\otimes e_I)=0$ if and only if $\gamma(e_{-\alpha})e_I=0$ or equivalently if and only if $I$ contains $\alpha$. Summarizing, we have
	
	\begin{equation}
	\pi(e_\alpha)\otimes \gamma(e_{-\alpha})(v_\nu\otimes e_I)
	\begin{cases}
	=0 \quad \text{if } \alpha\notin{}^{1}\Delta^{(\nu)}\text{ or }\alpha\in I\\
	\neq 0\quad \text{if } \alpha\in{}^{1}\Delta^{(\nu)}\text{ and }\alpha\notin I\\
	\end{cases}
	\end{equation}
	Using a similar argument, we obtain
	\begin{equation}
	\pi(e_{-\alpha})\otimes \gamma(e_{\alpha})(v_\nu\otimes e_I)
	\begin{cases}
	=0 \quad \text{if } \alpha\notin{}^{2}\Delta^{(\nu)}\text{ or }\alpha\notin I\\
	\neq 0\quad \text{if } \alpha\in{}^{2}\Delta^{(\nu)}\text{ and }\alpha\in I\\
	\end{cases}
	\end{equation}
	Therefore, the element $v_\nu\otimes e_I$ belongs to \begin{equation*}\bigcap\limits_{\alpha\in\Delta}\ker(\pi(e_\alpha)\otimes \gamma(e_{-\alpha}))\end{equation*}
	if and only if for every $\alpha$ in ${}^{1}\Delta^{(\nu)}$ we have $\alpha$ belonging to $I$ and for every $\alpha$ in ${}^{2}\Delta^{(\nu)}$ we have $\alpha$ not belonging to $I$. This is equivalent to say that ${}^1\Delta^{(\nu)}\subseteq I$ and ${}^2\Delta^{(\nu)}\cap I=\emptyset$, i.e. $I\in\mathcal{P}^{(\nu)}$.
	Hence, 
	
	\begin{equation*}
	\bigcap\limits_{\alpha\in\Delta}\ker(\pi(e_\alpha)\otimes \gamma(e_{-\alpha}))=\bigoplus\limits_{\nu\in \Pi(V)}\langle v_\nu\otimes e_I\mid I\in \mathcal{P}^{(\nu)}\rangle.
	\end{equation*}
	Using Proposition \ref{intersectionD}, we obtain that
	
	\begin{equation*}
	\ker\widehat{D}_{\mathfrak{g},\mathfrak{t}}(V)=\bigoplus\limits_{\nu\in \Pi(V)}\langle v_\nu\otimes e_I\mid I\in \mathcal{P}^{(\nu)}\rangle.\qedhere
	\end{equation*}
\end{proof}

Let us illustrate the situation with an example. 

\begin{example}
Let $\mathfrak{g}$ be
$\mathfrak{sl}(n,\mathbb{C})$, i.e. the Lie algebra of all $n-$by$-n$ complex traceless matrices.
The root system of $\mathfrak{g}$ is \begin{equation*}\Delta=\{\pm(\varepsilon_i-\varepsilon_j)\mid 1\leq i<j\leq n  \}.
\end{equation*} 
We choose \begin{equation*}\Delta^+=\{\varepsilon_i-\varepsilon_j\mid 1\leq i<j\leq n \}
\end{equation*}
to be the set of positive roots of $\Delta$
and let $V$ be the standard representation of $\mathfrak{g}$.
Then $V$ is irreducible, of highest weight
\begin{equation*}
\lambda=\frac{n}{n+1}\varepsilon_1-\frac{1}{n+1}\varepsilon_2-\ldots-\frac{1}{n+1}\varepsilon_{n+1}
\end{equation*} 
while
\begin{equation*}
\Pi(V)=\{\mu_i:=\lambda-\varepsilon_1+\varepsilon_i\mid 1\leq i\leq n\}.
\end{equation*}
One can check that $V$ satisfies property $(*)$ and
\begin{equation*}
\begin{array}{rrl}
&^1\Delta^{(\mu_k)}&=\{\varepsilon_i-\varepsilon_k\mid 1\leq i<k\},\\
\text{and}&^2\Delta^{(\mu_k)}&=\{\varepsilon_k-\varepsilon_l\mid k<l\leq n+1\}.
\end{array}
\end{equation*}
Therefore, if $v_1,\ldots,v_n$ are the weight vectors of $\mu_1,\ldots,\mu_n$ respectively, according to Theorem \ref{property*}:
\begin{equation*}
\ker \widehat{D}_{\mathfrak{g},\mathfrak{t}}(V)=\bigoplus\limits_{1\leq k\leq n}\langle v_k\otimes e_{-\varepsilon_1+\varepsilon_k}\wedge\ldots\wedge e_{-\varepsilon_{k-1}+\varepsilon_{k}}\wedge e_J\mid J\subseteq \Delta^+\setminus\big({}^1\Delta^{(\mu_k)}\cup {}^2\Delta^{(\mu_k)}\big)\rangle.
\end{equation*}
In particular, one deduces that 
\begin{equation*}\dim\ker\widehat{D}_{\mathfrak{g},\mathfrak{t}}(V)=n\times 2^{\frac{(n-1)(n-2)}{2}},
\end{equation*}
while, according to \eqref{kernel}, the dimension of the cubic Dirac operator $D_{\mathfrak{g},\mathfrak{h}}(V)$ is:
\begin{equation*}
\dim\ker D_{\mathfrak{g},\mathfrak{t}}(V)=n!
\end{equation*}
\end{example}

In a similar way, one can proceed with the other classical Lie algebras.
The following table contains the dimensions of the kernels $\ker D_{\mathfrak{g},\mathfrak{t}}(V)$ and $\ker\widehat{D}_{\mathfrak{g},\mathfrak{t}}(V)$ in these cases.

{ \renewcommand*{\arraystretch}{1.9}
	\begin{table}[H]
		\centering
		\begin{tabular}{|c|c|c|}
			\hline
			$\mathfrak{g}$ &$\dim\ker D_{\mathfrak{g},\mathfrak{t}}(V)=\lvert W\rvert$ &$\dim\ker\widehat{D}_{\mathfrak{g},\mathfrak{t}}(V)$ \\
			\hhline{|=|=|=|}
			$\mathfrak{sl}(n,\mathbb{C})$&$n!$&
			$	n\times 2^{\frac{(n-1)(n-2)}{2}}$\\
			$\mathfrak{so}(2n+1,\mathbb{C})$&$2^nn!$&$2n\times 2^{(n-1)^2}$\\
			$\mathfrak{sp}(n,\mathbb{C})$&$2^nn!$&$2n\times 2^{(n-1)^2}$\\
			$\mathfrak{so}(2n,\mathbb{C})$&$2^{n-1}n!$&$2n\times 2^{(n-1)(n-2)}$\\
			\hline
		\end{tabular}
		\captionsetup{justification=centering}
		\caption{Dimensions of the kernels of cubic and noncubic Dirac operators for the standard representation}\label{dimkernels}
	\end{table}
}

\section{Noncubic Dirac operators for exceptional Lie algebras}\label{sectionexceptional}
In this section, we discuss the kernel of noncubic Dirac operators for exceptional Lie algebras.
Let $\mathfrak{g}$ be an exceptional Lie algebra and choose $\mathfrak{h}$ to be a Cartan subalgebra $\mathfrak{t}$ of $\mathfrak{g}$. Let $\Delta$ be the root system of $\mathfrak{g}$, $\Pi:=\{\alpha_1,\ldots,\alpha_l \}$ a set of simple roots of $\Delta$ and $\Delta^+$ the set of positive roots determined by $\Pi$. 
We wish to describe the kernel of the noncubic Dirac operator $\widehat{D}_{\mathfrak{g},\mathfrak{t}}(V)$ when $V$ is the standard representation of $\mathfrak{g}$. This is the case when the highest weight $\lambda$ of the $\mathfrak{g}$-representation $V$ is the first fundamental weight $\bar{w}_1$ of $\mathfrak{g}$. As in the proof of Lemma \ref{dominant}, set
\begin{equation*}
\begin{array}{rl}
\Delta_1&:=\{\alpha\in\Delta\mid \langle\lambda,\alpha\rangle=0 \},\\
\Delta_2&:=\Delta\setminus\Delta_1,\\
\Delta_i^+&:=\Delta_i\cap\Delta^+,\quad i=1,2.
\end{array}
\end{equation*}
Let $\rho_1$ be the half-sum of the positive roots of $\Delta_1^+$. As we have already shown in the proof of Lemma \ref{dominant}, $\Delta_1$ is a root system with $\Delta_1^+$ being the set of positive roots. Moreover, one can verify that $\Delta_1^+=\Delta^+\cap\langle\alpha_2,\ldots,\alpha_l\rangle$ and thus $\Pi_1:=\{\alpha_2,\ldots,\alpha_l \}$ is the set of simple roots for $\Delta_1^+$. Consequently, the Dynkin diagram of $\Delta_1$ is obtained from the Dynkin diagram of $\Delta$ by deleting the edge corresponding to the simple root $\alpha_1$.

To calculate $\ker\widehat{D}_{\mathfrak{g},\mathfrak{t}}(V)$, according to Theorem \ref{thmbasic}, one has to determine the set $A(\lambda)$. In other words, using Proposition \ref{basic}, it suffices to find all $\Delta^+$-dominant weights of the form
\begin{equation*}
\rho-\sum\limits_{\alpha\in A}\alpha,\quad A\subseteq\Delta_1^+.
\end{equation*}
To find these weights, we will need the following lemma.

\begin{lemma}\label{lemdominant}
	With the above notation, let $A$ be a subset of $\Delta_1^+$. The weight $\rho-\sum\limits_{\alpha\in A}\alpha$ is $\Delta^+$-dominant if and only if the weight $\rho_1-\sum\limits_{\alpha\in A}\alpha$ is $\Delta^+_1$-dominant.
\end{lemma}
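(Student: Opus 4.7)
The plan is to exploit the decomposition $\rho = \rho_1 + (\rho - \rho_1)$ together with the fact, already established inside the proof of Lemma \ref{dominant}, that $\rho - \rho_1$ is $W_1$-invariant and therefore orthogonal to every root of $\Delta_1$. Since $A \subseteq \Delta_1^+$, the vector $\sum_{\alpha \in A} \alpha$ lies in the $\mathbb{R}$-span of $\Delta_1$, so for every $\beta \in \Delta_1$ one gets
\begin{equation*}
\langle \rho - \textstyle\sum_{\alpha \in A}\alpha, \beta\rangle \;=\; \langle \rho_1 - \textstyle\sum_{\alpha \in A}\alpha, \beta\rangle.
\end{equation*}
This identity immediately gives the ``only if'' direction: $\Delta^+$-dominance tested against elements of $\Delta_1^+ \subseteq \Delta^+$ is literally the same condition as $\Delta_1^+$-dominance of $\rho_1 - \sum_{\alpha \in A}\alpha$.

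For the ``if'' direction, the same identity handles inner products against $\beta \in \Delta_1^+$, so what remains is to verify
\begin{equation*}
\langle \rho - \textstyle\sum_{\alpha \in A}\alpha, \gamma\rangle \;\geq\; 0 \quad \text{for every } \gamma \in \Delta_2^+.
\end{equation*}
This is exactly the step carried out in the second half of the proof of Lemma \ref{dominant}, and the plan is to import that argument verbatim. Assume for contradiction that the inner product is negative for some $\gamma \in \Delta_2^+$; then $s_\gamma(\rho - \sum_{\alpha \in A}\alpha) = \rho - \sum_{\alpha \in A}\alpha + n\gamma$ with $n > 0$. By Lemma \ref{otherspin} applied to the Cartan pair, this reflected vector is still a weight of the spin module $S$ for $\mathbf{C}(\mathfrak{q}')$, hence of the form $\rho - \sum_{\beta \in B}\beta$ for some $B \subseteq \Delta^+$. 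Equating the two expressions and pairing with the highest weight $\lambda$ forces $n\langle \gamma, \lambda\rangle = -\sum_{\beta \in B}\langle \beta, \lambda\rangle$; the left side is strictly positive because $\gamma \in \Delta_2^+$, while the right side is nonpositive because $\lambda$ is dominant and $B \subseteq \Delta^+$, a contradiction.

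The main obstacle is really only this second direction, and it is not so much an obstacle as a bookkeeping matter: one has to observe that the argument in Lemma \ref{dominant} never used the extra assumption that $w_0(\rho - \sum_{\alpha \in A}\alpha)$ was already $\Delta_1^+$-dominant to establish the $\Delta_2^+$-inequalities. Thus the spinor/reflection argument applies to $\rho - \sum_{\alpha \in A}\alpha$ itself, and the lemma follows.
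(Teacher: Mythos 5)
Your proof is correct and follows essentially the same route as the paper: the forward direction uses the orthogonality of $\rho-\rho_1$ to $\Delta_1$, and the converse combines that identity on $\Delta_1^+$ with the observation that the spinor/reflection argument from the proof of Lemma \ref{dominant} gives $\langle\rho-\sum_{\alpha\in A}\alpha,\gamma\rangle\geq 0$ for all $\gamma\in\Delta_2^+$ without any dominance hypothesis. The paper's proof simply cites that earlier argument where you spell it out, so there is no substantive difference.
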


\begin{proof}
	If $\rho-\sum\limits_{\alpha\in A}\alpha$ is $\Delta^+$-dominant, obviously it is $\Delta_1^+$-dominant. Moreover, $\rho_1-\sum\limits_{\alpha\in A}\alpha$ is $\Delta_1^+$-dominant. Indeed, as we have already seen in the proof of Proposition \ref{basic}, $\langle\rho-\rho_1,\beta\rangle=0$ for every $\beta$ in $\Delta_1^+$ and so 
	
	\begin{equation*}\label{equat}
	\begin{array}{rl}
	\langle\rho_1-\sum\limits_{\alpha\in A}\alpha,\beta\rangle&=\langle(\rho-\rho_1)+\rho-\sum\limits_{\alpha\in A}\alpha,\beta\rangle\\
	&=\langle\rho-\sum\limits_{\alpha\in A}\alpha,\beta\rangle.
	\end{array}
	\end{equation*}
	Therefore, $\rho_1-\sum\limits_{\alpha\in A}\alpha$ is $\Delta^+_1$-dominant if and only if $\rho-\sum\limits_{\alpha\in A}\alpha$ is $\Delta_1^+$-dominant, which is the case for $\rho-\sum\limits_{\alpha\in A}\alpha$ being $\Delta^+$-dominant.
	
	On the other hand, if $\rho_1-\sum\limits_{\alpha\in A}\alpha$ is $\Delta_1^+$-dominant, then, as we have seen in the proof of Proposition \ref{basic}, $\rho-\sum\limits_{\alpha\in A}\alpha$ is $\Delta^+$-dominant. 
\end{proof}

Using the above lemma, in order to determine $A(\lambda)$, it suffices to find all the $\Delta_1^+$-dominant weights of the form
\begin{equation}\label{domform}
\rho_1-\sum\limits_{\alpha\in A}\alpha, \quad A\subseteq \Delta_1^+.
\end{equation}
%
Let $\mathfrak{g}_1$ be the Lie algebra corresponding to $\Delta_1$ and choose $\mathfrak{h}_1$ to be a Cartan subalgebra of $\mathfrak{g}_1$. Then $\mathfrak{q}_1:=\mathfrak{h}_1^\perp=\mathfrak{n}_1\oplus \mathfrak{n}_1^-$, where $\mathfrak{n}_1$ (respectively $\mathfrak{n}_1^-$) is the sum of all positive (respectively negative) root subspaces of $\mathfrak{g}_1$. 
Then the $\Delta_1^+$-dominant weights of the form \eqref{domform} are exactly the $\Delta_1^+$-dominant weights of the space of spinors $S_1$ of the Clifford algebra $\mathbf{C}(\mathfrak{n}_1\oplus\mathfrak{n}_1^-)$.

We illustrate the above discussion with the following example. Let $\mathfrak{g}$ be the exceptional Lie algebra $F_4$ and $V$ the standard representation of $\mathfrak{g}$. The root system of $\mathfrak{g}$ is
\begin{equation*}
\Delta:=\{\pm\varepsilon_i\pm\varepsilon_j\mid 1\leq i<j\leq 4 \}\cup\{\pm\varepsilon_i\mid 1\leq i\leq 4 \}\cup\big\{\frac{1}{2}(\pm\varepsilon_1\pm\varepsilon_2\pm\varepsilon_3\pm\varepsilon_4) \big\},
\end{equation*}
where $\varepsilon_i$ is the element of the dual space of $\mathbb{R}^4$ sending the $i$-th element of the standard basis to $1$ and all the others to $0$. The set $\Delta$ is the root system of $F_4$ and we fix 
\begin{equation*}
\Delta^+:=\{\varepsilon_i\pm\varepsilon_j\mid 1\leq i<j\leq 4 \}\cup\{\varepsilon_i\mid 1\leq i\leq 4 \}\cup\{\frac{1}{2}(\varepsilon_1\pm\varepsilon_2\pm\varepsilon_3\pm\varepsilon_4) \}
\end{equation*}
to be the set of positive roots of $\Delta$. Then, \begin{equation*}\Pi:=\{\alpha_1:=\frac{1}{2}(\varepsilon_1-\varepsilon_2-\varepsilon_3-\varepsilon_4),\alpha_2:=\varepsilon_4, \alpha_3:=\varepsilon_3-\varepsilon_4,\alpha_4:=\varepsilon_2-\varepsilon_3 \}
\end{equation*}
is the set of simple roots of $\Delta$. The highest weight $\lambda$ of the standard representation is $\lambda=\bar{\omega}_1=\varepsilon_1$ and the set of roots being orthogonal with $\lambda$ is 
\begin{equation*}
\Delta_1=\{\pm\varepsilon_i\pm\varepsilon_j\mid 2\leq i<j\leq 4 \}\cup\{\pm\varepsilon_i\mid 2\leq i\leq 4 \}.
\end{equation*}
Then, the half sum of the positive roots of $\Delta_1^+$ is $\rho_1=\frac{5}{2}\varepsilon_2+\frac{3}{2}\varepsilon_3+\frac{1}{2}\varepsilon_4$ and 
\begin{equation*}
\Pi_1:=\{\alpha_2,\alpha_3,\alpha_4 \}
\end{equation*}
is the set of simple roots of $\Delta_1$. The Dynkin diagram of $F_4$ is 

\begin{equation*}
\text{
	\begin{tikzpicture}[scale=.6]
	\draw (-3,0) node[anchor=east]  {$F_4$};
	\draw (-1.6,-0.8) node[anchor=east]  {$1$};
	\draw (0.4,-0.8) node[anchor=east]  {$2$};
	\draw (2.4,-0.8) node[anchor=east]  {$3$};
	\draw (4.4,-0.8) node[anchor=east]  {$4$};
	\draw (1.6,0) node[anchor=east]  {$<$};
	\draw[thick,fill=black] (-2 cm ,0) circle (.3 cm);
	\draw[thick,fill=black] (0 ,0) circle (.3 cm);
	\draw[thick,fill=black] (2 cm,0) circle (.3 cm);
	\draw[thick,fill=black] (4 cm,0) circle (.3 cm);
	\draw[thick] (15: 3mm) -- +(1.5 cm, 0);
	\draw[xshift=-2 cm,thick] (0: 3 mm) -- +(1.4 cm, 0);
	\draw[thick] (-15: 3 mm) -- +(1.5 cm, 0);
	\draw[xshift=2 cm,thick] (0: 3 mm) -- +(1.4 cm, 0);
	\end{tikzpicture}}.
\end{equation*}
By deleting the edge corresponding to $\alpha_1$, we obtain the
Dynkin diagram of $\Delta_1$:

\begin{equation*}
\text{
	\begin{tikzpicture}[scale=.6] 
	\draw (0.4,-0.8) node[anchor=east]  {$2$};
	\draw (2.4,-0.8) node[anchor=east]  {$3$};
	\draw (4.4,-0.8) node[anchor=east]  {$4$};
	\draw (1.6,0) node[anchor=east]  {$<$};
	\draw[thick,fill=black] (0 ,0) circle (.3 cm);
	\draw[thick,fill=black] (2 cm,0) circle (.3 cm);
	\draw[thick,fill=black] (4 cm,0) circle (.3 cm);
	\draw[thick] (15: 3mm) -- +(1.5 cm, 0);
	\draw[thick] (-15: 3 mm) -- +(1.5 cm, 0);
	\draw[xshift=2 cm,thick] (0: 3 mm) -- +(1.4 cm, 0);
	\end{tikzpicture}}.
\end{equation*}
This is exactly the Dynkin diagram of the Lie algebra $B_3$. With the previous notation, the $\Delta_1^+$-dominant weights of the spin module $S_1$ for $B_3$ are given in Table \ref{F4table}.

{ \renewcommand*{\arraystretch}{1.9}
	\begin{table}[H]
		\centering
		\begin{tabular}{|c|c|}
			\hline
			$\text{Weight}$ & \text{Vectors}\\
			\hhline{|=|=|}
			$\frac{5}{2}\varepsilon_2+\frac{3}{2}\varepsilon_3+\frac{1}{2}\varepsilon_4 $ & $1$\\
			\hline
			$\frac{5}{2}\varepsilon_2+\frac{1}{2}\varepsilon_3+\frac{1}{2}\varepsilon_4 $ & $\substack{\quad\\e_{-\varepsilon_3}\\\quad\\e_{-\varepsilon_3+\varepsilon_4}\wedge e_{-\varepsilon_4}\\
				\quad}$ \\
			\hline
			$\frac{3}{2}\varepsilon_2+\frac{3}{2}\varepsilon_3+\frac{3}{2}\varepsilon_4 $& $\substack{\quad\\e_{-\varepsilon_2+\varepsilon_3}\wedge e_{-\varepsilon_3+\varepsilon_4}\\\quad \\e_{-\varepsilon_2+\varepsilon_4}\\\quad }$\\
			\hline
			$\frac{3}{2}\varepsilon_2+\frac{3}{2}\varepsilon_3+\frac{1}{2}\varepsilon_4 $& $\substack{\quad\\e_{-\varepsilon_2}\\\quad\\e_{-\varepsilon_2+\varepsilon_3}\wedge e_{-\varepsilon_3}\\\quad\\e_{-\varepsilon_2+\varepsilon_4}\wedge e_{-\varepsilon_4}\\\quad\\e_{-\varepsilon_2+\varepsilon_3}\wedge e_{-\varepsilon_3+\varepsilon_4}\wedge e_{-\varepsilon_4} \\
				\quad}$\\
			\hline
			$\frac{3}{2}\varepsilon_2+\frac{1}{2}\varepsilon_3+\frac{1}{2}\varepsilon_4 $& $\substack{\quad\\e_{-\varepsilon_2-\varepsilon_3} \\
				\quad\\e_{-\varepsilon_2}\wedge e_{-\varepsilon_3}\\
				\quad\\e_{-\varepsilon_3}\wedge e_{-\varepsilon_4}\wedge e_{-\varepsilon_2+\varepsilon_4} \\
				\quad\\e_{-\varepsilon_2}\wedge e_{-\varepsilon_3+\varepsilon_4}\wedge e_{-\varepsilon_4}\\\quad\\e_{-\varepsilon_3+\varepsilon_4}\wedge e_{-\varepsilon_2-\varepsilon_4} \\
				\quad\\e_{-\varepsilon_2-\varepsilon_3}\wedge e_{-\varepsilon_3+\varepsilon_4}\wedge e_{-\varepsilon_3-\varepsilon_4} \\
				\quad\\e_{-\varepsilon_2+\varepsilon_4}\wedge e_{-\varepsilon_3-\varepsilon_4}\\\quad\\e_{-\varepsilon_3}\wedge e_{-\varepsilon_4}\wedge e_{-\varepsilon_2+\varepsilon_3}\wedge e_{-\varepsilon_3+\varepsilon_4}\\\quad}$\\
			\hline
			$\frac{1}{2}\varepsilon_2+\frac{1}{2}\varepsilon_3+\frac{1}{2}\varepsilon_4 $& $\substack{\quad\\e_{-\varepsilon_2}\wedge e_{-\varepsilon_2-\varepsilon_3} \\
				\quad\\e_{-\varepsilon_2}\wedge e_{-\varepsilon_3+\varepsilon_4}\wedge e_{-\varepsilon_2-\varepsilon_4}\\
				\quad\\e_{-\varepsilon_2}\wedge e_{-\varepsilon_2+\varepsilon_4}\wedge e_{-\varepsilon_3-\varepsilon_4} \\
				\quad\\e_{-\varepsilon_4}\wedge e_{-\varepsilon_2+\varepsilon_4}\wedge e_{-\varepsilon_2-\varepsilon_3}\\
				\quad\\e_{-\varepsilon_3}\wedge e_{-\varepsilon_2+\varepsilon_4}\wedge e_{-\varepsilon_2-\varepsilon_4} \\
				\quad\\e_{-\varepsilon_3}\wedge e_{-\varepsilon_2+\varepsilon_3}\wedge e_{-\varepsilon_2-\varepsilon_3} \\
				\quad\\e_{-\varepsilon_2}\wedge e_{-\varepsilon_3}\wedge e_{-\varepsilon_4}\wedge e_{-\varepsilon_2+\varepsilon_4}\\
				\quad\\e_{-\varepsilon_3}\wedge e_{-\varepsilon_2+\varepsilon_3}\wedge e_{-\varepsilon_2+\varepsilon_4}\wedge e_{-\varepsilon_3-\varepsilon_4}\\
				\quad\\e_{-\varepsilon_3}\wedge e_{-\varepsilon_2+\varepsilon_3}\wedge e_{-\varepsilon_3+\varepsilon_4}\wedge e_{-\varepsilon_2-\varepsilon_4}\\
				\quad\\e_{-\varepsilon_2}\wedge e_{-\varepsilon_2+\varepsilon_3}\wedge e_{-\varepsilon_3+\varepsilon_4}\wedge e_{-\varepsilon_3-\varepsilon_4}\\
				\quad\\e_{-\varepsilon_4}\wedge e_{-\varepsilon_2+\varepsilon_4}\wedge e_{-\varepsilon_3+\varepsilon_4}\wedge e_{-\varepsilon_2-\varepsilon_4}\\
				\quad\\e_{-\varepsilon_4}\wedge e_{-\varepsilon_2+\varepsilon_3}\wedge e_{-\varepsilon_3+\varepsilon_4}\wedge e_{-\varepsilon_2-\varepsilon_3}\\
				\quad\\e_{-\varepsilon_2}\wedge e_{-\varepsilon_3}\wedge e_{-\varepsilon_4}\wedge e_{-\varepsilon_2+\varepsilon_3}\wedge e_{-\varepsilon_3+\varepsilon_4}\\
				\quad\\e_{-\varepsilon_4}\wedge e_{-\varepsilon_2+\varepsilon_3}\wedge e_{-\varepsilon_2+\varepsilon_4}\wedge e_{-\varepsilon_3+\varepsilon_4}\wedge e_{-\varepsilon_3-\varepsilon_4}\\
				
				\quad
			}$\\
			\hline
			\end{tabular}
			\caption{$\Delta_1^+$-dominant weights of $S_1$}\label{F4table}
			\end{table}
		}
		
		According to the discussion at the beginning of this chapter, the set $A(\lambda)$ consists of all elements of the form $\rho-\rho_1+\mu_1$, with $\mu_1$ being a weight of Table \ref{F4table}. Using Theorem \ref{thmbasic}, the kernel of the noncubic Dirac operator $\widehat{D}_{\mathfrak{g},\mathfrak{h}}(V)$ is 
		
		\begin{equation*}
		\ker\widehat{D}_{\mathfrak{g},\mathfrak{t}}(V)=\bigoplus\limits_{\mu_1}(V\otimes S)^{[(\rho-\rho_1)+\mu_1]},
		\end{equation*} 
		where $\mu_1$ runs over the set of the weights of Table \ref{F4table}. The dimension of each isotypic component $(V\otimes S)^{[(\rho-\rho_1)+\mu_1]}$ can be deduced by this table.
		
		In \cite{thesis}, we prove similar results for the other exceptional Lie algebras.
		
		\section{Application to Slebarski's Dirac operators}\label{sectionSleb}
		In \cite{slebarski}, Slebarski studied a family of Dirac operators acting on functions defined over compact homogeneous spaces arising by a family of connections. A careful computation and discussion of their squares was provided by Agricola in the more general setting of naturally reductive homogeneous spaces \cite[Theorem 3.2]{Ilka}. She also established precise links between these operators and string theory in physics \cite[Section 4]{Ilka}. In what follows, we study representation-theoretic analogues of these operators.
		
		Let $G$ be a real compact connected semisimple Lie group and $H$ a closed subgroup of $G$ with complexified Lie algebras $\mathfrak{g}$ and $\mathfrak{h}$ respectively. Suppose that there is a common Cartan subalgebra $\mathfrak{t}$ of both $\mathfrak{g}$ and $\mathfrak{h}$. Let $\Delta$ and $\Delta_\mathfrak{h}$ be the corresponding root systems and $\Delta^+$ and $\Delta_\mathfrak{h}^+$ positive systems for $\Delta$ and $\Delta_\mathfrak{h}$ respectively such that $\Delta_\mathfrak{h}^+\subset\Delta^+$. Moreover, suppose that $\mathfrak{h}$ satisfies the condition \eqref{conditionh}
		and let 
		\begin{equation*}
		\mathfrak{g}=\mathfrak{h}\oplus\mathfrak{q} \text{ with }\mathfrak{q}=\mathfrak{h}^\perp.
		\end{equation*}
		Consider the Clifford algebra $\mathbf{C}(\mathfrak{q})$ of $\mathfrak{q}$ and let $S$ be a space of spinors for $\mathbf{C}(\mathfrak{q})$. Then $\mathfrak{h}$ acts on $S$ by the spin representation \eqref{haction}. This action integrates to an action of the spin double cover $\widetilde{H}$ of $H$. Let $E$ be a finite-dimensional irreducible representation of $\widetilde{H}$ such that $S\otimes E$ is a representation of $H$. Let $\mathcal{C}^\infty(G)$ be the space of complex-valued smooth functions defined on $G$. Then $G$ acts smoothly on $\mathcal{C}^\infty(G)$ by left and right translations and this action can be extended on the space $L^2(G)$ of square-integrable functions of $G$. Let 
		$	\left[L^2(G)\otimes (S\otimes E)\right]^H$
		be the space of $H$-invariants with the $H$-action on $L^2(G)$ given by right translations. Then $G$ acts on $\left[L^2(G)\otimes (S\otimes E)\right]^H$ by left-translations on $L^2(G)$.
		
		\begin{definition}[Geometric cubic and $t$-noncubic Dirac operators]\label{geomcub}\index{Dirac operator!geometric cubic}
			For $t\in[0,2]$, let $\mathcal{D}^t_{G/H}(E)$ be the operator \begin{equation*}\mathcal{D}^t_{G/H}(E):\left[L^2(G)\otimes (S\otimes E)\right]^H\rightarrow \left[L^2(G)\otimes (S\otimes E)\right]^H
			\end{equation*} given by
			\begin{equation*}
			\mathcal{D}^t_{G/H}(E)=\sqrt{2}\big\{\sum\limits_{i,j}\langle\tilde{e}_i,\tilde{e}_j\rangle r(e_i)\otimes \gamma(e_j)\otimes 1-t\cdot1\otimes \gamma(c)\otimes 1\big\},
			\end{equation*}
			where
			$\{\tilde{e}_i\}$ and $\{e_i\}$ are dual bases of $\mathfrak{q}$, $r(\cdot)$ the differential of right translations and $\gamma(c)$ the cubic term \eqref{cubictermm}. As in Definition \ref{noncubic}, for $t\in[0,1)\cup(1,2]$, we will call $\mathcal{D}^t_{G/H}(E)$ the geometric $t$-noncubic Dirac operator, while $\mathcal{D}_{G/H}(E):=\mathcal{D}^1_{G/H}(E)$ is called the geometric cubic Dirac operator. For $t=0$, the operator $\widehat{\mathcal{D}}_{G/H}(E):=\mathcal{D}^0_{G/H}(E)$ is called the geometric noncubic Dirac operator.		\end{definition}
		The operators $\mathcal{D}^t_{G/H}(E)$, $t\in[0,2]$, do not depend on the choice of the bases $\{e_i\}$ and $\{\tilde{e}_i\}$ and all are $G$-equivariant so that their kernels are representations of $G$. Moreover, in the case of $\mathcal{D}_{G/H}(E)$, in analogy to \eqref{Dsquare}, one has
		\begin{equation*}
		\mathcal{D}_{G/H}(E)^2=r(\Omega_\mathfrak{g})\otimes1\otimes1-\big(r\otimes \gamma\big)(\Omega_{\mathfrak{h}_\Delta})\otimes1+\big(\lVert\rho\rVert^2-\lVert\rho_\mathfrak{h}\rVert^2\big).
		\end{equation*}
		
		There is a connection between algebraic and geometric Dirac operators. More precisely, using Peter-Weyl theorem, one obtains a $G$-equivariant isomorphism \cite{thesis}
		\begin{equation}\label{equivpw}
		\Phi:\left[L^2(G)\otimes E\right]^H\xrightarrow{\hspace{1mm}\sim\hspace{1mm}} \widehat{\bigoplus\limits_{\lambda\in \widehat{G}}} V_\lambda\otimes \left[V_{\lambda}^*\otimes E\right]^H,
		\end{equation}
		where $V_\lambda$ stands for the finite-dimensional irreducible $G$-representation of highest weight $\lambda\in\mathfrak{t}^*$ and $V_\lambda^*$ for its dual.
		Let
		\begin{equation*}
		\mathcal{D}^t_{G/H}(E)_\lambda: \left[L^2(G)\otimes(S\otimes E)\right]^H_{ \lambda}\rightarrow \left[L^2(G)\otimes(S\otimes E)\right]^H_{ \lambda}
		\end{equation*} be the restriction of the operator $\mathcal{D}^t_{G/H}(E)$ on the $\lambda$-isotypic component, $\lambda\in\widehat{G}$, and $D^t_{\mathfrak{g},\mathfrak{h}}(V_\lambda^*)$ the algebraic Dirac operator attached to $V_\lambda^*$ as in Section \ref{main}.
		Then the diagram

		\begin{equation*}
	\begindc{\commdiag}[900]
	\obj(0,1){$\left[L^2(G)\otimes(S\otimes E)\right]^H_{ \lambda} $}
	\obj(3,1){$\left[L^2(G)\otimes(S\otimes E)\right]^H_{ \lambda}$}
	\obj(0,0){$V_\lambda\otimes \left[V_{\lambda}^*\otimes (S\otimes E)\right]^H $}
	\obj(3,0){$V_\lambda\otimes \left[V_{\lambda}^*\otimes (S\otimes E)\right]^H$}
	\mor{$\left[L^2(G)\otimes(S\otimes E)\right]^H_{ \lambda} $}{$\left[L^2(G)\otimes(S\otimes E)\right]^H_{ \lambda}$}{$\mathcal{D}^t_{G/H}(E)_\lambda$}
	\mor{$\left[L^2(G)\otimes(S\otimes E)\right]^H_{ \lambda} $}{$V_\lambda\otimes \left[V_{\lambda}^*\otimes (S\otimes E)\right]^H $}{$\Phi$}
	\mor{$\left[L^2(G)\otimes(S\otimes E)\right]^H_{ \lambda}$}{$V_\lambda\otimes \left[V_{\lambda}^*\otimes (S\otimes E)\right]^H$}{$\Phi$}
	\mor{$V_\lambda\otimes \left[V_{\lambda}^*\otimes (S\otimes E)\right]^H $}{$V_\lambda\otimes \left[V_{\lambda}^*\otimes (S\otimes E)\right]^H$}{$1\otimes D^t_{\mathfrak{g},\mathfrak{h}}(V_\lambda^*)\otimes 1$}
	\enddc
		\end{equation*}
		is commutative, i.e. 
		\begin{equation*}
		\Phi\circ\mathcal{D}^t_{G/H}(E)_\lambda=(1\otimes D^t_{\mathfrak{g},\mathfrak{h}}(V_\lambda^*)\otimes 1)\circ \Phi.
		\end{equation*}
		Therefore, $\Phi$
		establishes an equivalence
		\begin{equation}
		\ker\mathcal{D}^t_{G/H}(E)_\lambda\simeq V_\lambda\otimes \left[\ker D^t_{\mathfrak{g},\mathfrak{h}}(V_\lambda^*)\otimes E\right]^H
		\end{equation}
		of representations of $G$.
		Since $G$ acts trivially on the component $[V_\lambda^*\otimes (S\otimes E)]^H$ of $V_\lambda\otimes[V_\lambda^*\otimes (S\otimes E)]^H$, $V_\lambda\otimes[\ker D^t_{\mathfrak{g},\mathfrak{h}}(V_\lambda^*)\otimes E]^H$ turns out to be equivalent to a direct sum of $\dim[\ker D^t_{\mathfrak{g},\mathfrak{h}}(V_\lambda^*)\otimes E]^H$ copies of $V_\lambda$, i.e. a direct sum of $\dim\mathrm{Hom}_H\big(E^*,\ker D^t_{\mathfrak{g},\mathfrak{h}}(V_\lambda^*)\big)$ copies of $V_\lambda$ which, for the sake of simplicity, we denote 
		\begin{equation*}
		\dim\left[ \mathrm{Hom}_H\big(E^*,\ker D^t_{\mathfrak{g},\mathfrak{h}}(V_\lambda^*)\big)\right]V_\lambda.
		\end{equation*}
		On the other hand, the spin module $S$ is self-dual, i.e. it coincides with its dual (see \cite{Deligne}), and thus $D^t_{\mathfrak{g},\mathfrak{h}}(V_\lambda^*)$ can be seen as the dual operator $D^t_{\mathfrak{g},\mathfrak{h}}(V_\lambda)^*$ of $D^t_{\mathfrak{g},\mathfrak{h}}(V_\lambda)$ acting on $V_\lambda^*\otimes S^*$. 
		Then 
		\begin{align*}
		\mathrm{Hom}_H\big(E^*,\ker D^t_{\mathfrak{g},\mathfrak{h}}(V_\lambda^*)\big)&\simeq \mathrm{Hom}_H\big(E^*,\ker D^t_{\mathfrak{g},\mathfrak{h}}(V_\lambda)^*\big)\\
		&=\mathrm{Hom}_H\big(E^*,\big(\ker D^t_{\mathfrak{g},\mathfrak{h}}(V_\lambda)\big)^*\big)\\
		&=\mathrm{Hom}_H\big(\ker D^t_{\mathfrak{g},\mathfrak{h}}(V_\lambda),E\big)
		\end{align*}
		and thus
		\begin{equation*}\label{withlambda}
		\ker\mathcal{D}^t_{G/H}(E)_\lambda\simeq \dim\mathrm{Hom}_H\big(\ker D^t_{\mathfrak{g},\mathfrak{h}}(V_\lambda),E\big) V_\lambda
		\end{equation*}
		and 
		\begin{equation}\label{geomkernel}
		\ker \mathcal{D}^t_{G/H}(E)\simeq\bigoplus\limits_{\lambda\in\widehat{G}}\dim\mathrm{Hom}_H\big(\ker D^t_{\mathfrak{g},\mathfrak{h}}(V_\lambda),E\big) V_\lambda.
		\end{equation}
		For more details on the relationship between algebraic and geometric Dirac operators, see \cite{mehdizierauAdv} and \cite{mehdi-zierauColl}.
		
		\begin{theorem}\label{thm62} Let $G$ be a compact connected semisimple Lie group with complexified Lie algebra $\mathfrak{g}$, $H$ be a closed subgroup of $G$ such that its complexified Lie algebra $\mathfrak{h}$ satisfies condition \eqref{conditionh}. Suppose that there is a common Cartan subalgebra $\mathfrak{t}$ of $\mathfrak{g}$ and $\mathfrak{h}$, and $E$ is an irreducible finite-dimensional $\mathfrak{h}$-representation of highest weight $\mu\in\mathfrak{t}^*$ with respect to a fixed positive root system $\Delta^+_\mathfrak{h}$ such that $S\otimes E$ lifts to a representation of $H$. Then, for $t\in(0,2)$, the kernel $\ker\mathcal{D}^t_{G/H}(E)$ of the geometric $t$-noncubic Dirac operator
			\begin{equation*}
			\mathcal{D}^t_{G/H}(E):\left[L^2(G)\otimes (S\otimes E)\right]^H\rightarrow \left[L^2(G)\otimes (S\otimes E)\right]^H
			\end{equation*}
			coincides with the kernel $\ker\mathcal{D}_{G/H}(E)$ of the geometric cubic Dirac operator $\mathcal{D}_{G/H}(E)$. In particular, if there is $w\in W$ such that $w(\mu+\rho_\mathfrak{h})-\rho$ is analytically integral and dominant with respect to a positive system $\Delta^+\supseteq\Delta^+_\mathfrak{h}$ of $\Delta$, $\ker\mathcal{D}^t_{G/H}(E)$ is equivalent with the irreducible representation $V_{w(\mu+\rho_\mathfrak{h})-\rho}$ of highest weight $w(\mu+\rho_\mathfrak{h})-\rho$ of $G$. Otherwise, $\ker\mathcal{D}^t_{G/H}(E)$ is trivial.
		\end{theorem}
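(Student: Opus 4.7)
The plan is to reduce the theorem to the algebraic setting via the duality isomorphism \eqref{geomkernel}, apply Theorem \ref{kerneltcubic} for the coincidence statement, and then invoke Kostant's decomposition \eqref{kernel} to obtain the explicit description.

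First, plugging $\mathcal{D}^t_{G/H}(E)$ into \eqref{geomkernel} gives
\begin{equation*}
\ker \mathcal{D}^t_{G/H}(E) \simeq \bigoplus_{\lambda \in \widehat{G}} \dim \mathrm{Hom}_H\big(\ker D^t_{\mathfrak{g},\mathfrak{h}}(V_\lambda), E\big)\, V_\lambda.
\end{equation*}
Since each $V_\lambda$ is finite-dimensional and $t \in (0,2)$, Theorem \ref{kerneltcubic} yields $\ker D^t_{\mathfrak{g},\mathfrak{h}}(V_\lambda) = \ker D_{\mathfrak{g},\mathfrak{h}}(V_\lambda)$, so a termwise comparison with the analogous decomposition at $t=1$ immediately gives the coincidence $\ker \mathcal{D}^t_{G/H}(E) \simeq \ker \mathcal{D}_{G/H}(E)$.

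For the explicit identification, I would substitute Kostant's formula \eqref{kernel} into the right-hand side to rewrite
\begin{equation*}
\ker \mathcal{D}^t_{G/H}(E) \simeq \bigoplus_{\lambda \in \widehat{G}} \bigoplus_{w' \in W^1} \dim \mathrm{Hom}_H\big(F_{w'(\lambda+\rho)-\rho_\mathfrak{h}},\, F_\mu\big)\, V_\lambda.
\end{equation*}
Since $F_\mu$ and each summand $F_{w'(\lambda+\rho)-\rho_\mathfrak{h}}$ are irreducible, Schur's lemma ensures that every Hom space has dimension at most one, being one-dimensional precisely when $w'(\lambda+\rho) - \rho_\mathfrak{h} = \mu$. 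Setting $w := (w')^{-1}$, this equation becomes $\lambda = w(\mu + \rho_\mathfrak{h}) - \rho$.

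Since a $G$-highest weight must be $\Delta^+$-dominant and analytically integral, and since a $W$-orbit contains at most one $\Delta^+$-dominant element, at most one pair $(w, \lambda)$ satisfies all the constraints. When such a $w$ exists, exactly one summand $V_{w(\mu+\rho_\mathfrak{h})-\rho}$ contributes with multiplicity one to the kernel, and otherwise $\ker \mathcal{D}^t_{G/H}(E)$ is trivial. The step I expect to require most care is verifying that the element $w^{-1}$ thus produced actually lies in Kostant's transversal $W^1$, i.e. that $w \Delta_\mathfrak{h}^+ \subseteq \Delta^+$; this should follow from the strict dominance of $\rho_\mathfrak{h}$ in $\Delta_\mathfrak{h}$ together with that of $\lambda + \rho$ in $\Delta$, so that for each $\alpha \in \Delta_\mathfrak{h}^+$ one has $\langle \lambda + \rho, w\alpha\rangle = \langle w(\mu+\rho_\mathfrak{h}), w\alpha\rangle = \langle \mu + \rho_\mathfrak{h}, \alpha\rangle > 0$, forcing $w\alpha \in \Delta^+$. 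The remaining ingredients are a direct assembly of the two results cited.
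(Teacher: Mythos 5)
Your first paragraph reproduces the paper's proof almost verbatim: pass through the duality isomorphism \eqref{geomkernel}, apply Theorem \ref{kerneltcubic} termwise, and conclude $\ker\mathcal{D}^t_{G/H}(E)=\ker\mathcal{D}_{G/H}(E)$. Where you diverge is in establishing the explicit description: the paper at this point simply cites Landweber's theorem on harmonic spinors \cite{landweber}, whereas you rederive it from scratch by plugging Kostant's decomposition \eqref{kernel} into the Frobenius-reciprocity sum, applying Schur's lemma to collapse each $\mathrm{Hom}_H\big(F_{w'(\lambda+\rho)-\rho_\mathfrak{h}},F_\mu\big)$ to a Kronecker delta, and then solving the resulting weight equation $\lambda=w(\mu+\rho_\mathfrak{h})-\rho$. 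Your self-contained argument is correct: the strict dominance chain $\langle\lambda+\rho,w\alpha\rangle=\langle\mu+\rho_\mathfrak{h},\alpha\rangle>0$ for $\alpha\in\Delta_\mathfrak{h}^+$ indeed forces $w\Delta_\mathfrak{h}^+\subseteq\Delta^+$, i.e.\ $w^{-1}\in W^1$, so the membership constraint from Kostant's formula is automatic once $\lambda$ is dominant. Uniqueness of $(\lambda,w)$ follows as you sketch: $\lambda+\rho$ is the unique strictly dominant element of its $W$-orbit, and its $\Delta$-regularity passes to $\mu+\rho_\mathfrak{h}$, so the stabilizer is trivial and $w$ is pinned down as well. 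What you gain by this route is a proof entirely internal to the paper; what you lose is brevity, since the paper's appeal to Landweber disposes of the second half in one line. Either is acceptable.
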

		
		\begin{proof}
			According to Theorem \ref{kerneltcubic}, for $t\in(0,2)$, $\ker D^t_{\mathfrak{g},\mathfrak{h}}(V_\lambda)=\ker D_{\mathfrak{g},\mathfrak{h}}(V_\lambda)$ and thus
			\begin{equation*}
			\dim\mathrm{Hom}_H\big(\ker D^t_{\mathfrak{g},\mathfrak{h}}(V_\lambda),E\big)=\dim\mathrm{Hom}_H\big(\ker D_{\mathfrak{g},\mathfrak{h}}(V_\lambda),E\big)
			\end{equation*}
			and, from \eqref{geomkernel},
			\begin{equation*}
			\ker\mathcal{D}^t_{G/H}(E)_\lambda=\ker\mathcal{D}_{G/H}(E)_\lambda.
			\end{equation*}
			Hence for every $t\in(0,2)$,
			\begin{equation*}
			\ker \mathcal{D}^t_{G/H}(E)=\ker\mathcal{D}_{G/H}(E).
			\end{equation*}
			Now the result follows from \cite{landweber}. 
		\end{proof}
		
		For the noncubic geometric Dirac opearator we have the following theorem.
		\begin{theorem}\label{thm63}
			Let $G$ be a compact connected semisimple Lie group with complexified Lie algebra $\mathfrak{g}$ and $T$ be a maximal torus of $G$ with complexified Lie algebra $\mathfrak{t}$. Fix a one-dimensional representation $E$ of $\mathfrak{t}$ determined by a $\Delta^+$-dominant element $\mu$ of ${t}^*$ with respect to a fixed positive root system $\Delta^+$ for $\Delta:=\Delta(\mathfrak{g},\mathfrak{t})$ and such that $S\otimes E$ lifts to a representation of $T$.
			For every element $\lambda$ of the set $\Lambda$ of all $\Delta^+$-dominant and analytically integral elements of $\mathfrak{t}^*$, set 
			\begin{equation*}
			K_\lambda:=\{\rho-\sum\limits_{\alpha\in A}\alpha\mid A\subseteq\big(\mathbb{R}\{\lambda\}\big)^\perp\cap\Delta^+\text{ and } \rho-\sum\limits_{\alpha\in A}\alpha \text{ is } \Delta^+\text{-dominant}\}
			\end{equation*}
			and 
			\begin{equation*}
			\mathcal{A}_\mu:=\{(\lambda,\kappa)\in\Lambda\times K_\lambda\mid \lambda+\kappa=\mu\}.
			\end{equation*}
			Then, the kernel of the noncubic Dirac operator
			\begin{equation*} \widehat{\mathcal{D}}_{G/T}(E):\left[L^2(G)\otimes (S\otimes E)\right]^T\rightarrow\left[L^2(G)\otimes (S\otimes E)\right]^T
			\end{equation*}
			is 
			\begin{equation*}
			\ker\widehat{\mathcal{D}}_{G/T}(E)=\sum\limits_{(\lambda,\kappa)\in \mathcal{A}_\mu}\dim(V_\lambda\otimes S)_\mu V_{\lambda},
			\end{equation*}
			where $V_\lambda$ is the finite-dimensional irreducible $G$-representation of highest weight $\lambda$.
		\end{theorem}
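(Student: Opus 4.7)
The plan is to combine the algebraic-to-geometric dictionary \eqref{geomkernel} with the explicit description of $\ker\widehat{D}_{\mathfrak{g},\mathfrak{t}}(V_\lambda)$ supplied by Theorem \ref{thmbasic} and Proposition \ref{basic}, and then read off the $\mu$-weight space of that kernel. First, applying \eqref{geomkernel} to the operator $\widehat{\mathcal{D}}_{G/T}(E)$ gives
\[
\ker\widehat{\mathcal{D}}_{G/T}(E)\simeq\bigoplus_{\lambda\in\widehat{G}}\dim\mathrm{Hom}_T\bigl(\ker\widehat{D}_{\mathfrak{g},\mathfrak{t}}(V_\lambda),E\bigr)\,V_\lambda.
\]
Since $E$ is a one-dimensional $T$-module of weight $\mu$, for any finite-dimensional $T$-module $M$ one has $\dim\mathrm{Hom}_T(M,E)=\dim M_\mu$. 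Hence the multiplicity of $V_\lambda$ in the geometric kernel equals $\dim\bigl(\ker\widehat{D}_{\mathfrak{g},\mathfrak{t}}(V_\lambda)\bigr)_\mu$, and I am reduced to a purely algebraic weight-space calculation for each $\lambda\in\Lambda$.

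Next, Proposition \ref{basic} furnishes a complete family of representatives for $A(\lambda)$ of the form $\lambda+\kappa$ with $\kappa\in K_\lambda$, so Theorem \ref{thmbasic} rewrites
\[
\ker\widehat{D}_{\mathfrak{g},\mathfrak{t}}(V_\lambda)=\bigoplus_{\kappa\in K_\lambda}(V_\lambda\otimes S)^{[\lambda+\kappa]},
\]
and each summand $(V_\lambda\otimes S)^{[\lambda+\kappa]}$ is by definition the direct sum of the weight spaces $(V_\lambda\otimes S)_{w(\lambda+\kappa)}$ over cosets $w\in W/W_{\lambda+\kappa}$. Extracting the $\mu$-weight space then amounts to deciding for which pairs $(\lambda,\kappa)$ the weight $\mu$ lies in the $W$-orbit of $\lambda+\kappa$.

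The decisive observation is that both $\mu$ and $\lambda+\kappa$ are $\Delta^+$-dominant: $\mu$ by hypothesis, and $\lambda+\kappa$ because $\lambda$ is dominant by assumption while $\kappa\in K_\lambda$ is dominant by the very definition of $K_\lambda$, so the pairing of $\lambda+\kappa$ against any $\alpha\in\Delta^+$ is nonnegative. Since each $W$-orbit meets the dominant Weyl chamber in a single point, the condition $\mu\in W(\lambda+\kappa)$ forces $\mu=\lambda+\kappa$. Therefore
\[
\bigl(\ker\widehat{D}_{\mathfrak{g},\mathfrak{t}}(V_\lambda)\bigr)_\mu=\bigoplus_{\kappa\in K_\lambda,\,\lambda+\kappa=\mu}(V_\lambda\otimes S)_\mu,
\]
whose dimension is $\sum_{\kappa\in K_\lambda,\,\lambda+\kappa=\mu}\dim(V_\lambda\otimes S)_\mu$. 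Substituting back into the first display and re-indexing by the set $\mathcal{A}_\mu$ yields the claimed formula.

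I do not anticipate a serious obstacle here: the heavy structural work has already been done in Theorem \ref{thmbasic} and Proposition \ref{basic}, so only bookkeeping remains. The one point deserving care is the compatibility between the analytic integrality of $\mu$ (imposed by the hypothesis that $S\otimes E$ lifts to $T$) and that of each candidate $\lambda+\kappa$, so that the pairs in $\mathcal{A}_\mu$ are genuine contributors and no admissible pair is missed; this amounts to verifying that $\rho-\sum_{\alpha\in A}\alpha$ is automatically analytically integral whenever it arises as the dominant representative of a weight of the spin module $S$, which follows from the construction of $S$ as a $\widetilde{T}$-module.
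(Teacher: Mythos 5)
Your argument is correct and follows essentially the same route as the paper: apply the Frobenius-type decomposition \eqref{geomkernel} (specialized to $H=T$ and $t=0$), substitute the description of $\ker\widehat{D}_{\mathfrak{g},\mathfrak{t}}(V_\lambda)$ from Theorem \ref{thmbasic} with representatives $\lambda+\kappa$, $\kappa\in K_\lambda$, from Proposition \ref{basic}, and use the $\Delta^+$-dominance of both $\mu$ and $\lambda+\kappa$ to reduce the $\mathrm{Hom}_T$-count to the condition $\lambda+\kappa=\mu$, which is exactly the Kronecker-delta step in the paper's computation. The only difference is cosmetic: you phrase the weight extraction via $\dim\mathrm{Hom}_T(M,E)=\dim M_\mu$ and uniqueness of dominant orbit representatives, which the paper carries out by the same token inside its chain of equalities.
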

		
		\begin{proof}
			According to Theorem \ref{thmbasic},
			\begin{align*}
			\ker \widehat{D}_{\mathfrak{g},\mathfrak{t}}(V_\lambda)&=\bigoplus\limits_{\nu\in A(\lambda)}(V_\lambda\otimes S)^{[\nu]}\\
			&=\bigoplus\limits_{\kappa\in K_\lambda}(V_\lambda\otimes S)^{[\lambda+\kappa]}\\
			&=\hspace{-4mm}\bigoplus\limits_{\substack{\kappa\in K_\lambda\\w\in W/W_{\lambda+\kappa}}}\hspace{-4mm}(V_\lambda\otimes S)_{w(\lambda+\kappa)}.
			\end{align*}
			Therefore, 
			\begin{align*}
			\ker\widehat{\mathcal{D}}_{G/T}(E)_\lambda&=\dim\mathrm{Hom}_T\big(\ker \widehat{D}_{\mathfrak{g},\mathfrak{t}}(V_\lambda),E\big) V_\lambda\\
			&=\dim\mathrm{Hom}_T\big(\hspace{-4mm}\bigoplus\limits_{\substack{\kappa\in K_\lambda\\w\in W/W_{\lambda+\kappa}}}\hspace{-4mm}(V_\lambda\otimes S)_{w(\lambda+\kappa)},E\big) V_\lambda\\
			&=\sum\limits_{\substack{\kappa\in K_\lambda\\w\in W/W_{\lambda+\kappa}}}\hspace{-4mm}\dim\mathrm{Hom}_T\big((V_\lambda\otimes S)_{w(\lambda+\kappa)},E\big) V_\lambda.
			\end{align*}
			Since $\mu$ has been chosen to be $\Delta^+$-dominant, 
			\begin{align*}
			\ker\widehat{\mathcal{D}}_{G/T}(E)_\lambda&=\sum\limits_{\kappa\in K_\lambda}\dim\mathrm{Hom}_T\big((V_\lambda\otimes S)_{\lambda+\kappa},E\big) V_\lambda\\
			&=\sum\limits_{\kappa\in K_\lambda}\dim(V_\lambda\otimes S)_{\lambda+\kappa}\cdot\delta_{\mu,\lambda+\kappa}\cdot V_\lambda\\
			&=\sum\limits_{\kappa\in K_\lambda}\dim(V_\lambda\otimes S)_{\mu}\cdot\delta_{\mu-\kappa,\lambda}\cdot V_{\mu-\kappa}
			\end{align*}
			Hence
			\begin{align*}
			\hspace{26mm}
			\ker\widehat{\mathcal{D}}_{G/T}(E)&=\sum\limits_{\substack{\lambda\in\Lambda\\ \kappa\in K_\lambda}}\dim(V_\lambda\otimes S)_{\mu}\cdot\delta_{\mu-\kappa,\lambda}\cdot V_{\mu-\kappa}&\\
			&=\sum\limits_{(\lambda,\kappa)\in\mathcal{A}_\mu} \dim(V_\lambda\otimes S)_{\mu}V_{\lambda}.&	\hspace{21mm}\qedhere
			\end{align*}
		\end{proof}

	\end{document}